\definecolor{uuuuuu}{rgb}{0.27,0.27,0.27}
\definecolor{sqsqsq}{rgb}{0.1255,0.1255,0.1255}
\newtheorem{definition}{Definition} [section]
\newtheorem{theorem}[definition]{Theorem}
\newtheorem{lemma}[definition]{Lemma}
\newtheorem{proposition}[definition]{Proposition}
\newtheorem{claim}[definition]{Claim}
\newtheorem{problem}[definition]{Problem}
\newtheorem{observation}[definition]{Observation}
\newcommand{\hide}[1]{}
\def\multisets#1#2{\ensuremath{\left(\kern-.3em\left(\genfrac{}{}{0pt}{}{#1}{#2}\right)\kern-.3em\right)}}
\begin{document}
\title{Two stability theorems for $\mathcal{K}_{\ell + 1}^{r}$-saturated hypergraphs}

\date{\today}

\author[1]{Jianfeng Hou\thanks{Research was supported by National Natural Science Foundation of China (Grant No. 12071077). Email: jfhou@fzu.edu.cn}}
\author[1]{Heng Li\thanks{Email: hengli.fzu@gmail.com}}
\author[1]{Caihong Yang\thanks{Email: ych325123@outlook.com}}
\author[1]{Qinghou Zeng\thanks{Research was supported by National Natural Science Foundation of China (Grant No. 12001106) and National Natural Science Foundation of Fujian Province (Grant No. 2021J05128). Email: zengqh@fzu.edu.cn}}
\author[1]{Yixiao Zhang\thanks{Email: fzuzyx@gmail.com}}
\affil[1]{Center for Discrete Mathematics,
            Fuzhou University, Fujian, 350003, China}

\maketitle
\begin{abstract}
Let $\mathcal{F}$ be a family of  $r$-uniform hypergraphs (henceforth $r$-graphs). An $\mathcal{F}$-saturated $r$-graph is a maximal $r$-graph not containing any member of $\mathcal{F}$ as a subgraph. For integers $\ell \geq r \geq 2$, let $\mathcal{K}_{\ell + 1}^{r}$ be the collection of all $r$-graphs $F$ with at most $\binom{\ell+1}{2}$ edges such that for some $\left(\ell+1\right)$-set $S$ every pair $\{u, v\} \subset S$ is covered by an edge in $F$, and let $T_r (n, \ell)$ be the complete $\ell$-partite $r$-graph on $n$ vertices with no two part sizes differing by more than one. Let $t_{r}(n, \ell)$ be the number of edges in $T_r (n, \ell)$. Our first result shows that for each $\ell \geq r \geq 2$ every $\mathcal{K}_{\ell+1}^{r}$-saturated $r$-graph on $n$ vertices with $t_{r}(n, \ell) - o(n^{r-1+1/\ell})$ edges contains a complete $\ell$-partite subgraph on $(1-o(1))n$ vertices, which extends a stability theorem for $K_{\ell+1}$-saturated graphs given by Popielarz, Sahasrabudhe and Snyder. We also show that the bound is best possible.

Our second result is motivated by a celebrated theorem of Andr\'{a}sfai, Erd\H{o}s and S\'{o}s which states that for $\ell \geq 2$ every $K_{\ell+1}$-free graph $G$ on $n$ vertices with minimum degree $\delta(G) > \frac{3\ell-4}{3\ell-1}n$ is $\ell$-partite. We give a hypergraph version of it.   The \emph{minimum positive co-degree} of an $r$-graph $\mathcal{H}$, denoted by $\delta_{r-1}^{+}(\mathcal{H})$, is the maximum $k$ such that if $S$ is an $(r-1)$-set contained in a edge of $\mathcal{H}$, then $S$
is contained in at least $k$ distinct edges of $\mathcal{H}$. Let $\ell\ge 3$ be an integer and $\mathcal{H}$ be a $\mathcal{K}_{\ell+1}^3$-saturated $3$-graph on $n$ vertices. We prove that if either $\ell \ge 4$ and $\delta_{2}^{+}(\mathcal{H}) > \frac{3\ell-7}{3\ell-1}n$; or $\ell = 3$ and $\delta_{2}^{+}(\mathcal{H}) > 2n/7$, then $\mathcal{H}$ is $\ell$-partite; and the bound is best possible. This is the first stability result on minimum positive co-degree for hypergraphs.

\end{abstract}
\section{Introduction}\label{SEC:Introduction}
For an integer $r\ge 2$ an $r$-uniform hypergraph (henceforth $r$-graph) $\mathcal{H}$ is a collection of $r$-subsets of some finite set $V$. Given a family $\mathcal{F}$ of $r$-graphs we say $\mathcal{H}$ is $\mathcal{F}$-free if it does not contain any member of $\mathcal{F}$ as a subgraph. The {\em Tur\'{a}n number} $\mathrm{ex}(n,\mathcal{F})$ of $\mathcal{F}$ is the maximum number of edges in an $\mathcal{F}$-free $r$-graph on $n$ vertices. The {\em Tur\'{a}n density} $\pi(\mathcal{F} )$ of $\mathcal{F}$ is defined as
$\pi(\mathcal{F})=\lim_{n\to \infty}\mathrm{ex}(n,\mathcal{F})/{n\choose r}$. For $\ell \geq r \geq 2$, the generalized Tur\'{a}n graph, denote by $T_r (n, \ell)$, is the complete $\ell$-partite $r$-graph on $n$ vertices with no two part sizes differing by more than one. We use   $t_r(n, \ell)$ to denote the number of edges in $T_r(n, \ell)$.

The study of $\mathrm{ex}(n,\mathcal{F})$ is  the central topic in extremal combinaorics. One of the most famous results in this regard is Tur\'{a}n's theorem,
which states that for every integer $\ell \ge 2$ the Tur\'{a}n number $\mathrm{ex}(n,K_{\ell+1})$
is uniquely achieved by the Tur\'{a}n graph $T_2(n,\ell)$ (or $T(n,\ell)$ for simplicity). It was extended to general graphs by Erd\H{o}s, Stone and Simonovits~ \cite{ES66, ES46}. For $r\ge 3$ determining $\pi(\mathcal{F})$ for a family $\mathcal{F}$ of $r$-graphs
is known to be notoriously hard.
Let $\ell > r \ge 3$ be integers, and let $K_{\ell}^{r}$ be the \emph{complete $r$-graph} on $\ell$ vertices. The problem of determining $\pi(K_{\ell}^{r})$, raised by Tur\'{a}n~\cite{TU41},
 is still wide open.
Erd\H{o}s offered $\$ 500$ for the determination of any $\pi(K_{\ell}^{r})$
and $\$ 1000$ for the determination of all $\pi(K_{\ell}^{r})$.

A breakthrough on this topic was given by Mubayi \cite{mubayi2006hypergraph}. Note that the family $\mathcal{K}_{\ell+1}^r$ is the collection of all $r$-graphs $F$ with at most $\binom{\ell+1}{2}$ edges such that for some $(\ell+1)$-set $S$ every pair $\{x, y\}\subset S$ is covered by an edge in $F$. As a hypergraph extension  of the  classical Tur\'{a}n's theorem, Mubayi \cite{mubayi2006hypergraph} determined  the Tur\'{a}n number of  $\mathcal{K}_{\ell+1}^r$ by proving

\begin{theorem}[Mubayi \cite{mubayi2006hypergraph}]
Let $n, \ell, r \geq 2$ be integers. Then
\[
\mathrm{ex}(n, \mathcal{K}_{\ell+1}^r) = t_r(n, \ell),
\]
and the unique $r$-graph on $n$ vertices containing no copy of a member of $\mathcal{K}_{\ell+1}^r$ for which equality holds is $T_r(n, \ell)$.
\end{theorem}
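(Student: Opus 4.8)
The plan is to reduce the statement to the classical theorem of Zykov and Erd\H{o}s bounding the number of cliques of a fixed size in a $K_{\ell+1}$-free graph. For the lower bound, observe that $T_r(n,\ell)$ is $\mathcal{K}_{\ell+1}^r$-free: if some $F\in\mathcal{K}_{\ell+1}^r$ were a subgraph of $T_r(n,\ell)$, with witnessing $(\ell+1)$-set $S$, then by pigeonhole two vertices of $S$ would lie in a common part, yet the pair they form is covered by no edge of $T_r(n,\ell)$ (every such edge meets each part at most once), a contradiction. Hence $\mathrm{ex}(n,\mathcal{K}_{\ell+1}^r)\ge t_r(n,\ell)$.

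For the upper bound I would first recast $\mathcal{K}_{\ell+1}^r$-freeness. Given an $r$-graph $\mathcal{H}$, let $G(\mathcal{H})$ be the \emph{covering graph} on $V(\mathcal{H})$, where $\{u,v\}$ is an edge exactly when some edge of $\mathcal{H}$ contains both $u$ and $v$. Then $\mathcal{H}$ contains a member of $\mathcal{K}_{\ell+1}^r$ if and only if $G(\mathcal{H})$ contains $K_{\ell+1}$: if an $(\ell+1)$-set $S$ has all its pairs covered by edges of $\mathcal{H}$, then picking one covering edge per pair gives a subgraph $F\subseteq\mathcal{H}$ with at most $\binom{\ell+1}{2}$ edges and $F\in\mathcal{K}_{\ell+1}^r$; conversely any $F\in\mathcal{K}_{\ell+1}^r$ inside $\mathcal{H}$ supplies such an $S$, which spans $K_{\ell+1}$ in $G(\mathcal{H})$. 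Now if $\mathcal{H}$ is $\mathcal{K}_{\ell+1}^r$-free then $G:=G(\mathcal{H})$ is $K_{\ell+1}$-free; moreover every edge $e$ of $\mathcal{H}$ is an $r$-set all of whose $\binom r2$ pairs lie in $G$, so $e$ spans a $K_r$ in $G$, with distinct edges spanning distinct copies. Writing $k_r(\cdot)$ for the number of copies of $K_r$, this gives
\[
|E(\mathcal{H})|\ \le\ k_r(G)\ \le\ \max\bigl\{\,k_r(G')\ :\ G'\ \text{is a}\ K_{\ell+1}\text{-free graph on}\ n\ \text{vertices}\,\bigr\}.
\]
By the Zykov--Erd\H{o}s theorem this maximum is attained, uniquely, by the Tur\'{a}n graph $T(n,\ell)$, in which the number of copies of $K_r$ equals the number of $r$-sets meeting each part at most once, namely $t_r(n,\ell)$. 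Hence $\mathrm{ex}(n,\mathcal{K}_{\ell+1}^r)=t_r(n,\ell)$, and if $\mathcal{H}$ attains equality then both displayed inequalities are tight: tightness of the second forces $G(\mathcal{H})=T(n,\ell)$ by uniqueness in Zykov--Erd\H{o}s, and tightness of the first forces $\mathcal{H}$ to consist of \emph{all} $r$-cliques of $G(\mathcal{H})$, i.e.\ all $r$-sets meeting each part at most once, so $\mathcal{H}=T_r(n,\ell)$. The degenerate ranges ($\ell<r$, where both sides vanish, and very small $n$) are immediate from the same reformulation.

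Everything above except the Zykov--Erd\H{o}s clique-counting bound is routine bookkeeping, so that bound --- with its uniqueness part --- is the substantive ingredient. I would prove it by symmetrization: among a non-adjacent pair of vertices, replacing one of them by a clone of the one lying in more copies of $K_r$ never decreases $k_r$ and preserves $K_{\ell+1}$-freeness; iterating drives $G$ to a complete multipartite graph with at most $\ell$ parts, after which a convexity (AM--GM-type) estimate singles out the balanced $\ell$-partition. The one point requiring care is to follow equality through the symmetrization precisely enough to obtain uniqueness of the extremal configuration rather than merely the numerical bound.
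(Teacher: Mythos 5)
The paper never proves this statement itself --- it is quoted as Mubayi's theorem --- so the comparison is with Mubayi's original argument rather than with an internal proof; judged on its own, your proposal is correct. Your route (the Tur\'an-type construction for the lower bound, then passing from $\mathcal{H}$ to its covering graph, which is exactly the shadow $\partial_{r-2}\mathcal{H}$ of the paper, using that $\mathcal{H}$ is $\mathcal{K}_{\ell+1}^r$-free iff the covering graph is $K_{\ell+1}$-free and that distinct edges of $\mathcal{H}$ give distinct copies of $K_r$ there) is precisely the machinery the paper records later as Observation~\ref{ob1}, and coupling it with the Zykov--Erd\H{o}s theorem on maximizing the number of $K_r$'s in a $K_{\ell+1}$-free graph yields both the value $t_r(n,\ell)$ and, via the uniqueness part, the extremal characterization: equality forces the covering graph to be $T(n,\ell)$ and forces $\mathcal{H}$ to consist of all its $r$-cliques, i.e.\ $\mathcal{H}=T_r(n,\ell)$. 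This is a legitimately different and shorter derivation than Mubayi's, which proves the exact result (together with stability) by a direct hypergraph argument rather than by outsourcing to a clique-counting theorem. What your approach buys is transparency --- everything outside Zykov--Erd\H{o}s is bookkeeping, and your handling of the degenerate range $\ell<r$ is fine; what it costs is that all the substantive content, including uniqueness of the extremal graph, is concentrated in the symmetrization step, and as you yourself flag, tracking equality through Zykov symmetrization is genuinely delicate (a symmetrization step can leave the $K_r$-count unchanged without the graph being extremal), so that uniqueness must either be argued with care or cited from the literature where it is established.
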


Many families $\mathcal{F}$  have the property that there is a unique $\mathcal{F}$-free $r$-graph $\mathcal{G}$ on $n$ vertices with  $\mathrm{ex}(n,\mathcal{F})$ edges, and moreover, every $\mathcal{F}$-free $r$-graph $\mathcal{H}$ with the number of edges closed to $\mathrm{ex}(n,\mathcal{F})$ can be transformed to $\mathcal{G}$ by deleting and adding $o(n^r)$ edges.  Such a property is called the {\em stability} of $\mathcal{F}$. The Erd\H{o}s-Stone-Simonovits theorem \cite{ES66, ES46} and Erd\H{o}s-Simonovits stability theorem \cite{SI68} imply that every nondegenerate family of graphs is stable. The stability  of $\mathcal{K}_{\ell+1}^r$ was given by Mubayi \cite{mubayi2006hypergraph}. In this paper, we are interested in the stability results of maximal $\mathcal{K}_{\ell+1}^r$-free $r$-graphs. An $r$-graph $\mathcal{H}$ is called
$\mathcal{F}$-\emph{saturated} if $\mathcal{H}$ is $\mathcal{F}$-free and whenever a new edge $E$  is added to $\mathcal{H}$,  then $\mathcal{H}\cup \{E\}$ contains a copy of   a member of $\mathcal{F}$. The topic on the stability of $\mathcal{F}$-saturated graphs was widely studied when $r=2$, see \cite{gerbner2021note, wang2022counterexamples,wang2022stability}. For complete graphs, Tyomkyn and Uzzell \cite{tyomkyn2015strong} proved that every $K_{4}$-saturated with $t_2(n,3) - cn$ edges contains a complete $3$-partite subgraph on $(1 - o(1))n$ vertices. In the same paper, they asked  when can one guarantee an ``almost spanning'' complete $\ell$-partite subgraph in a  $K_{\ell+1}$-saturated graph  with $t_2(n, \ell)-o(n^2)$ edges.  Popielarz, Sahasrabudhe and Snyder \cite{popielarz2018stability}  answered the question by proving the following theorem and gave a tight bound through an ingenious construction.


\begin{theorem}[Popielarz, Sahasrabudhe and Snyder \cite{popielarz2018stability}]\label{th8}
Let $\ell \geq 2$ be an integer. Every $K_{\ell+1}$-saturated graph $G$ on $n$ vertices with $t_{2}(n,\ell) - o(n^{1+1/\ell})$ edges contains a complete $\ell$-partite subgraph on $(1-o(1))n$ vertices.
\end{theorem}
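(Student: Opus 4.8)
The plan is to combine stability with the two structural consequences of being $\mathcal K_{\ell+1}$-saturated, and then to clean up an $o(n)$-set of vertices. Write $e(G)=t_2(n,\ell)-s$ with $s=o(n^{1+1/\ell})$ and recall $t_2(n,\ell)=(1-\tfrac1\ell)\binom n2-O(n)$. First I would invoke the $r=2$ case of the stability theorem for $\mathcal K_{\ell+1}^2$ (equivalently, Erd\H os--Simonovits stability) to fix a partition $V(G)=V_1\cup\cdots\cup V_\ell$ for which $e_{\mathrm{in}}+m_{\mathrm{cr}}=o(n^2)$, where $e_{\mathrm{in}}$ is the number of edges inside parts and $m_{\mathrm{cr}}$ the number of non-edges between parts; among all such partitions pick one minimising $e_{\mathrm{in}}$, so that $d(v,V_i)\le d(v,V_j)$ for every $v\in V_i$ and every $j$ (local optimality), whence $d(v,V_i)\le d(v)/\ell\le n/\ell$. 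Counting pairs gives $m_{\mathrm{cr}}+\mathrm{Imb}=s+e_{\mathrm{in}}$ with $\mathrm{Imb}\ge 0$ the imbalance of the partition, so $m_{\mathrm{cr}}\le s+e_{\mathrm{in}}$ and the parts are near-balanced. The goal is now to delete $o(n)$ vertices so that the induced graph has no non-adjacent pair in distinct parts.

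The engine consists of two facts forced by $\mathcal K_{\ell+1}$-freeness plus saturation. Call $v$ \emph{heavy} if it has at least $n/(4\ell)$ non-neighbours outside its own part. Fact (a): every edge $uv$ inside a part $V_i$ has a heavy endpoint. Indeed, $\mathcal K_{\ell+1}$-freeness forces $N(u)\cap N(v)$ to contain no $K_{\ell-1}$; writing $A_k=N(u)\cap N(v)\cap V_k$ for $k\neq i$, a transversal of the $A_k$ that were a clique would be such a $K_{\ell-1}$, so no transversal is a clique, and the supersaturation estimate (a non-edge between $A_p,A_q$ lies in at most $\prod_{k\ne p,q}|A_k|$ transversals) yields at least $(\min_k|A_k|)^2$ non-edges between parts; since $m_{\mathrm{cr}}=o(n^2)$ this forces $\min_k|A_k|=o(n)$, hence some $V_k$ has $|\overline N(u)\cap V_k|+|\overline N(v)\cap V_k|\ge|V_k|-o(n)\ge n/(2\ell)$, and one of $u,v$ is heavy. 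Fact (b): if $uv$ is a non-edge with $u\in V_i$, $v\in V_j$, $i\neq j$, and neither endpoint is heavy, then some heavy vertex is adjacent to both. For this, the $\mathcal K_{\ell+1}$ created by adding $uv$ supplies a clique $W$ of size $\ell-1$ inside $N(u)\cap N(v)$: if $W$ meets $V_i$ that vertex is an in-part neighbour of $u$, hence heavy by (a); similarly for $V_j$; otherwise $W$ lies in the $\ell-2$ remaining parts, so two of its vertices share a part, giving an in-part edge whose heavy endpoint lies in $W\subseteq N(u)\cap N(v)$. Since each heavy vertex contributes at least $n/(4\ell)$ to $\sum_v(\text{cross-non-degree})=2m_{\mathrm{cr}}$, there are at most $8\ell\, m_{\mathrm{cr}}/n=o(n)$ heavy vertices.

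Deleting the set $H$ of heavy vertices, Fact (a) makes $G-H$ an $\ell$-partite graph; the remaining task, and the place where I expect the genuine difficulty, is to destroy its cross non-edges. Fact (b) traces each such non-edge, in $G$, to a heavy vertex adjacent to both endpoints, but removing that vertex does not remove the non-edge, so one cannot simply conclude. The intended route is a bootstrap: feed Fact (a) back into the exact count $e(G)\ge t_2(n,\ell)-s$ to upgrade $e_{\mathrm{in}}$, and hence $m_{\mathrm{cr}}$, from $o(n^2)$ down to $O(s)=o(n^{1+1/\ell})$, which shrinks $|H|$ to $o(n^{1/\ell})$ and lets a finite cleaning process (repeatedly deleting the few heavy vertices and the few vertices with too many surviving cross-non-neighbours, re-optimising the partition after each round) terminate in $O_\ell(1)$ rounds having deleted only $o(n)$ vertices, after which no in-part edge and no cross non-edge survives and the remainder is complete $\ell$-partite on $(1-o(1))n$ vertices. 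The naive bootstrap is circular — the bound on $|H|$ already uses $m_{\mathrm{cr}}$ — and breaking this circle is the technical heart: one must either extract from Fact (a) a genuinely sub-proportional estimate $e_{\mathrm{in}}=o(m_{\mathrm{cr}})+O(s)$ by analysing how the in-part neighbours of a non-heavy vertex force pairwise-disjoint blocks of cross non-edges, or, more plausibly given that $1+\tfrac1\ell$ is decreasing in $\ell$ and matches the K\H ov\'ari--S\'os--Tur\'an exponent when $\ell=2$, set up an induction on $\ell$ that peels off one part, applies the $(\ell-1)$-case to a suitable almost-saturated sub-structure, and pays the extra factor $n^{1/\ell}$ exactly once at the bottom of the recursion. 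The base case $\ell=2$ is clarifying: a bipartite $K_3$-saturated graph is automatically complete bipartite (a missing cross edge could never be added to create a triangle), so only in-part edges can occur, and since each such edge $uv$ has $N(u)\cap N(v)=\varnothing$ it imposes a $C_4$-free-type constraint on the deviation from $K_{\lfloor n/2\rfloor,\lceil n/2\rceil}$, which is where the $n^{3/2}$ threshold enters.
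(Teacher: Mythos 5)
There is a genuine gap, and you have in fact located it yourself: everything up to and including Facts (a) and (b) only establishes the qualitative picture (after deleting the $o(n)$ heavy vertices the graph is $\ell$-partite, and every surviving cross non-edge is $(\ell+1)$-saturating through heavy vertices), but the theorem is not about heavy vertices — it is about covering all cross non-edges by $o(n)$ further vertex deletions, and that is precisely the step your proposal leaves open. The engine that accomplishes this in the actual proof (quoted in this paper as Lemma~\ref{lem}, and used verbatim in the proof of Theorem~\ref{final result}) is a counting lemma of Kővári--Sós--Turán type: if every non-edge of a collection $E$ between two parts is $(\ell+1-t)$-saturating in one of $s$ auxiliary $K_{\ell+1-t}$-free graphs (here, the common neighbourhoods of the $t$-cliques inside the deleted set $T$), then any minimal covering set $R'$ of $E$ satisfies $|I_{\widehat{G}}(R')|\gtrsim s^{-1/(\ell-t)}|R'|^{(\ell+1-t)/(\ell-t)}$. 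Combining this lower bound with the upper bound $e(\widehat{G'})=O(\varepsilon n^2)$ on missing cross edges and the trivial bound $|\mathcal{C}_t|\le |T|^t$ is exactly what produces a covering set of size $o(n)$ under the hypothesis $s=o(n^{1+1/\ell})$; it is also the only place where the exponent $1+1/\ell$ enters. Your proposal never uses the hypothesis $e(G)\ge t_2(n,\ell)-o(n^{1+1/\ell})$ in any quantitative way (your stability step only needs $o(n^2)$), so as written it could at best prove a weaker statement, and you concede that your bootstrap ``upgrade $m_{\mathrm{cr}}$ to $O(s)$, then iterate'' is circular. Neither of the two escape routes you sketch (a sub-proportional estimate $e_{\mathrm{in}}=o(m_{\mathrm{cr}})+O(s)$, or an induction on $\ell$ peeling off one part) is carried out, and neither matches the actual mechanism, which is a single covering argument rather than an iteration or recursion.

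A secondary structural difference worth noting: the proof this paper follows (both PSS's original and its hypergraph adaptation in Section~\ref{SEC:3}) does not start from Erdős--Simonovits stability with a locally optimal partition. It deletes, one by one, vertices of degree below $\bigl(\tfrac{\ell-1}{\ell}-\eta\bigr)$ times the current order and applies the Andrásfai--Erdős--Sós theorem (Theorem~\ref{AES}) to conclude that the remainder $G'$ is $\ell$-partite with large minimum degree; the deleted set $T$ plays the role of your heavy set, and saturation forces every cross non-edge of $G'$ to complete a $K_{\ell+1}$ using some clique inside $T$, which is what feeds the covering lemma. Your Facts (a)/(b) are a reasonable substitute for that structural phase, but without the covering lemma (or an equivalent counting argument yielding the $(\ell+1-t)/(\ell-t)$ exponent) the proof does not close.
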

Our first result extends Theorem \ref{th8} to  $\mathcal{K}_{\ell+1}^r$-saturated $r$-graphs by proving
\begin{theorem}\label{th2}
Let ${\ell} \geq r \geq 3$ be integers. Every $\mathcal{K}_{\ell+1}^{r}$-saturated $r$-graph $\mathcal{H}$ on $n$ vertices with $  t_{r}(n, \ell) - o(n^{r-1+1/\ell})$ edges contains a complete $\ell$-partite subgraph on $(1 - o(1))n$ vertices.
\end{theorem}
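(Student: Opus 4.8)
The plan is to reduce the problem to the $2$-shadow of $\mathcal H$ and then adapt the graph-theoretic argument of Popielarz, Sahasrabudhe and Snyder. Write $G=\partial_2\mathcal H$ for the graph on $V(\mathcal H)$ whose edges are the pairs $\{x,y\}$ covered by some hyperedge of $\mathcal H$. The structurally cleanest step is that saturation pins $\mathcal H$ down entirely in terms of $G$. First, $\mathcal H$ is $\mathcal K_{\ell+1}^r$-free if and only if $G$ is $K_{\ell+1}$-free: a $K_{\ell+1}$ in $G$ on an $(\ell+1)$-set $S$ produces a member of $\mathcal K_{\ell+1}^r$ inside $\mathcal H$ by choosing one hyperedge covering each pair of $S$, and conversely the vertex set of a member of $\mathcal K_{\ell+1}^r$ contained in $\mathcal H$ is a $K_{\ell+1}$ in $G$. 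Consequently, for any $r$-set $F$ with $\binom{F}{2}\subseteq E(G)$ we have $\partial_2(\mathcal H\cup\{F\})=E(G)$, so $\mathcal H\cup\{F\}$ is again $\mathcal K_{\ell+1}^r$-free, and maximality of $\mathcal H$ forces $F\in\mathcal H$. Hence $\mathcal H$ is exactly the $r$-clique hypergraph $K_r(G):=\{F\in\binom{V}{r}:\binom{F}{2}\subseteq E(G)\}$, and $e(\mathcal H)=k_r(G)$, the number of copies of $K_r$ in $G$. The same equivalence yields the crucial translation: a vertex set $W$ spans a complete $\ell$-partite $r$-subgraph of $\mathcal H$ precisely when $W$ spans a complete $\ell$-partite $2$-subgraph of $G$, since if every cross pair of an $\ell$-partition of $W$ lies in $G$ then every rainbow $r$-set inside $W$ has all of its pairs in $G$ and so lies in $K_r(G)=\mathcal H$. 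It therefore suffices to find a complete $\ell$-partite subgraph of $G$ on $(1-o(1))n$ vertices.

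Second, I would transfer the edge count. By Mubayi's theorem $k_r(G)=e(\mathcal H)\le t_r(n,\ell)$, and here $k_r(G)\ge t_r(n,\ell)-o(n^{r-1+1/\ell})$, so $G$ is near-extremal. Using Mubayi's stability theorem for $\mathcal K_{\ell+1}^r$ together with a counting argument --- equivalently, a clique-counting stability estimate for $K_{\ell+1}$-free graphs, of the shape $t_r(n,\ell)-k_r(G)\ge \Omega(n^{r-2})\,(t_2(n,\ell)-e(G))$ --- I would produce an $\ell$-partition $V_1,\dots,V_\ell$ into nearly equal parts $|V_i|=n/\ell+o(n)$ whose number of \emph{defects} (pairs inside a part that lie in $G$, plus cross pairs that do not) is $o(n^{r-1+1/\ell})/n^{r-2}=o(n^{1+1/\ell})$. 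This is the exact point at which the exponent $r-1+1/\ell$ in the statement collapses to the exponent $1+1/\ell$ of Theorem~\ref{th8}: a single hyperedge accounts for $\Theta(n^{r-2})$ covered pairs.

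Third, I claim $G$ inherits a usable surrogate for $K_{\ell+1}$-saturation. If $x,y$ lie in distinct parts with $\{x,y\}\notin E(G)$, and if $N_G(x)\cap N_G(y)$ contains an $(r-2)$-clique $Z$ --- located greedily by taking one non-defective vertex from each of some $r-2\le\ell-2$ of the remaining parts, unless one of $x,y$ has only $o(n)$ neighbours, in which case it already carries $\Omega(n)$ defects and can be deleted outright --- then applying the saturation of $\mathcal H$ to the non-edge $F=\{x,y\}\cup Z$ of $\mathcal H$ yields an $(\ell+1)$-set $S$ with $\binom{S}{2}\subseteq E(G)\cup\binom{F}{2}$; since $\binom{F}{2}\setminus E(G)=\{\{x,y\}\}$, the set $S$ is a $K_{\ell+1}$ in $G+\{x,y\}$, which is exactly what $K_{\ell+1}$-saturation would supply. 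With this surrogate in hand I would run the argument of Popielarz, Sahasrabudhe and Snyder on $G$: after discarding the $o(n)$ vertices carrying many defects, show that the remaining set $B$ of vertices incident to any defect has size $o(n)$, by proving that a single defect at a vertex propagates --- through $\ell$ rounds of the surrogate-saturation property applied along non-neighbourhoods --- into $\Omega(n^{1/\ell})$ defects at that vertex or along a chain of vertices, so that the total-defect budget $o(n^{1+1/\ell})$ forces $|B|=o(n)$. Then $W:=V\setminus B$ has $(1-o(1))n$ vertices, each $W\cap V_i$ is non-empty, $G[W]$ is complete $\ell$-partite with parts $W\cap V_i$, and by the first step $\mathcal H[W]$ contains the complete $\ell$-partite $r$-graph on $W$.

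The main obstacle, I expect, is this final step: carrying out the $\ell$-level propagation of \cite{popielarz2018stability} with only the surrogate in place of genuine $K_{\ell+1}$-saturation, where at each round one must check that the pair to which saturation is applied still has a common neighbourhood rich enough to host an $(r-2)$-clique --- this forces one to track quantitative degree information along the propagation. A secondary technical point is the clique-counting stability lemma of the second step, whose error term must be followed honestly through the factor $n^{r-2}$ so as to land at $o(n^{1+1/\ell})$ and not degrade the exponent.
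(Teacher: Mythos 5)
Your skeleton is essentially the paper's: pass to the shadow graph $G=\partial_{r-2}\mathcal H$, observe that saturation forces $\mathcal H$ to consist exactly of the $r$-cliques of $G$ so that complete $\ell$-partite structure transfers back (Observations \ref{ob1}, \ref{l-partite} and Lemma \ref{Maximal Kl if and only if maximal Kr}), convert the hyperedge deficit $o(n^{r-1+1/\ell})$ into a graph-edge deficit $o(n^{1+1/\ell})$ (the paper does this with the Fisher--Ryan inequality, Theorem \ref{th1}, which is exactly the ``clique-counting stability estimate'' you postulate), show that each missing cross pair $\{x,y\}$ becomes $(\ell+1)$-saturating in $G$ by applying hypergraph saturation to $\{x,y\}\cup Z$ for an $(r-2)$-clique $Z\subseteq N(x)\cap N(y)$ (the paper reaches the same conclusion through the $K_r$-maximality of $G$ together with Claim \ref{Kr-2}), and then finish with the Popielarz--Sahasrabudhe--Snyder covering machinery. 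Steps 1 and 3 of your plan are sound and match the paper.

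The genuine gap is your second step, and it propagates into the final one. You ask Mubayi's stability plus counting to produce an $\ell$-partition of \emph{all} of $V(G)$ with only $o(n^{1+1/\ell})$ defects, \emph{including} edges inside parts. Stability gives edit distance $o(n^r)$, i.e.\ $o(n^2)$ at the graph level, and the edge-count transfer cannot do it either: writing $e(G)=\sum_{i<j}|V_i||V_j|-d_{\mathrm{out}}+d_{\mathrm{in}}$, a near-extremal edge count only bounds (roughly) $d_{\mathrm{out}}-d_{\mathrm{in}}$, so it gives no control of the internal defects $d_{\mathrm{in}}$ on its own. The paper proceeds differently at precisely this point: after deducing $e(G)\ge t_2(n,\ell)-\varepsilon n^2$ with $\varepsilon\le n^{1/\ell-1}$, it repeatedly deletes vertices of degree below $\bigl(\frac{\ell-1}{\ell}-\eta\bigr)v$ and invokes Andr\'asfai--Erd\H{o}s--S\'os (Theorem \ref{AES}), so the surviving graph $G'$ is \emph{genuinely} $\ell$-partite (internal defects are zero by construction), the deleted set $T$ has size $O(\varepsilon n)=o(n)$ (Claim \ref{T}), and $G'$ has nearly balanced parts with strong co-degree properties (Claim \ref{claim Vi}); only then are the missing cross pairs bounded by $o(n^{1+1/\ell})$ (Claim \ref{claim1}). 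These facts are not cosmetic: they are what make your greedy $(r-2)$-clique step legitimate, and they supply the structure needed for the last step, where each missing cross pair is classified by the number $t$ of vertices of $T$ in the $K_{\ell+1}$ it creates and Lemma \ref{lem} is applied, as a black box, to the $K_{\ell+1-t}$-free common neighbourhoods of $t$-cliques of $T$, whose number is at most $|T|^t$. In particular, the multi-round ``propagation with the surrogate'' you flag as the main obstacle is not needed at all --- saturation is invoked once per missing pair and the covering lemma is purely graph-theoretic --- but in your set-up, with no deleted class $T$ and no bound on the number of auxiliary $K_{\ell+1-t}$-free graphs, neither quoting nor reproving that covering step is available, and the claimed $o(n)$ bound on a set meeting all defects does not follow.
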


We also prove that the bound in  Theorem \ref{th2} is tight in the sense that for  some real  $\sigma > 0$ there exists an $r$-graph $\mathcal{H}$ with $t_r(n,\ell) - \sigma n^{r-1+1/\ell}$ edges such that the conclusion of Theorem \ref{th2} fails.

Another stability form is under  the degree condition, and the celebrated theorem on this topic was given by Andr\'{a}sfai, Erd\H{o}s and S\'{o}s \cite{andrasfai1974connection} as follows.

\begin{theorem}[Andr\'{a}sfai, Erd\H{o}s and S\'{o}s \cite{andrasfai1974connection}] \label{AES}
Every $K_{\ell+1}$-free graph $G$ on $n$ vertices with minimum degree $\delta(G)>\frac{3\ell-4}{3\ell-1}n$ is $\ell$-partite.
\end{theorem}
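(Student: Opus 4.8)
The plan is to induct on $\ell$; the case $\ell=1$ is trivial ($K_2$-free means edgeless), and the case $\ell=2$ — every triangle-free $G$ with $\delta(G)>\frac{2}{5}n$ is bipartite — I would settle directly. If such a $G$ were not bipartite it would contain a shortest odd cycle $C$, of length $2k+1\ge 5$; minimality forbids chords of $C$ and forces every vertex of $G$ to have at most two neighbours on $C$ (a third neighbour, or two neighbours at cyclic distance $\ne 2$, would create a triangle or a shorter odd cycle), so counting the edges between $V(C)$ and $V(G)\setminus V(C)$ from both sides gives $(2k+1)\bigl(\delta(G)-2\bigr)\le 2\bigl(n-(2k+1)\bigr)$, whence $\delta(G)\le\frac{2n}{2k+1}\le\frac{2n}{5}$, a contradiction. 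For the inductive step, let $\ell\ge 3$. The threshold is monotone in the right direction — since $(3\ell-4)^2-(3\ell-7)(3\ell-1)=9>0$ we have $\frac{3\ell-4}{3\ell-1}>\frac{3(\ell-1)-4}{3(\ell-1)-1}$ — so if $G$ is moreover $K_\ell$-free then it satisfies the hypothesis of the theorem for $\ell-1$, hence is $(\ell-1)$-partite and a fortiori $\ell$-partite. Thus we may assume $G\supseteq K_\ell$.

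\textbf{The core of the inductive step.} Since $G\supseteq K_\ell$, the graph $G$ has a complete $\ell$-partite subgraph with all parts nonempty; among all of these pick $H$ with $|V(H)|$ maximum and, subject to that, with the number of edges of $G$ inside $V(H)$ maximum, and let $W_1,\dots,W_\ell$ be its parts. Each $W_i$ is independent in $G$ as well — a $G$-edge inside $W_i$ together with a transversal of the other parts would be a $K_{\ell+1}$ — so it is enough to prove $V(H)=V(G)$, for then $G$ is $\ell$-partite. Suppose $w\notin V(H)$, and set $B=\{\,j:\ w\text{ has a non-neighbour in }W_j\,\}$. If $B=\emptyset$, then $w$ with a transversal of $H$ is a $K_{\ell+1}$. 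If $B=\{i\}$, then replacing $W_i$ by $(W_i\setminus N(w))\cup\{w\}$ yields another complete $\ell$-partite subgraph, so maximality of $|V(H)|$ forces $w$ to have a neighbour in $W_i$, and that neighbour, $w$, and a transversal of the remaining parts form a $K_{\ell+1}$. If $|B|=2$ and $w$ has a neighbour in each of the two parts of $B$, those two neighbours, $w$, and a transversal of the other $\ell-2$ parts form a $K_{\ell+1}$. Hence in any surviving situation $w$ is anti-complete to some part (when $|B|=2$) or has non-neighbours in at least three parts.

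\textbf{The hard case and the extremal constant.} It remains to rule out this ``spread-out'' vertex $w$ using $\delta(G)>\frac{3\ell-4}{3\ell-1}n$. Because $V(G)\setminus V(H)\ne\emptyset$, every $v\in W_i$ has exactly $|V(H)|-|W_i|$ neighbours in $V(H)$ and at most $n-|V(H)|$ outside it, so $\deg(v)\le n-|W_i|$ and therefore $|W_i|<\frac{3}{3\ell-1}n$ for all $i$. One then double-counts the bipartite graph between $V(H)$ and $V(G)\setminus V(H)$: from the $V(H)$ side each $v\in W_i$ contributes $|N(v)\cap(V(G)\setminus V(H))|>\frac{3\ell-4}{3\ell-1}n-|V(H)|+|W_i|$, together with $\sum_i|W_i|^2\ge|V(H)|^2/\ell$ by convexity, while from the outside each remaining $w$ contributes at most the bound on $|N(w)\cap V(H)|$ supplied by its anti-completeness; these estimates, confronted with the part-size bounds and the two maximality choices for $H$, are what should give the contradiction. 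This is exactly the place where the constant $\frac{3}{3\ell-1}=\frac{3}{3(\ell-2)+5}$ is forced: it encodes the extremal example, the blow-up of $C_5\ast K_{\ell-2}$ with the five cycle classes taken three times smaller than the $\ell-2$ clique classes. I expect this final inequality — making it actually close rather than merely be consistent — to be the main obstacle: the three clean sub-cases and the base case are routine $K_{\ell+1}$-hunting or direct counting, but the spread-out case genuinely needs the sharp minimum-degree bound, the secondary tie-break in the choice of $H$, and a careful global edge count.
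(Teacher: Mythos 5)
This statement is quoted in the paper as a classical result of Andr\'asfai, Erd\H{o}s and S\'os; the paper gives no proof of it, so your proposal can only be judged on its own terms. Your base case ($\ell=2$ via a shortest odd cycle) and the reduction to the case $G\supseteq K_\ell$ are correct, as are the three easy sub-cases for a vertex $w$ outside a maximum complete $\ell$-partite subgraph $H$ ($B=\emptyset$, $B=\{i\}$, and $|B|=2$ with neighbours in both parts of $B$). But the proof stops exactly where the theorem actually lives: the remaining case, where $w$ is anticomplete to some part or has non-neighbours in at least three parts, is never ruled out. You describe a double count of the edges between $V(H)$ and $V(G)\setminus V(H)$ and state yourself that making the final inequality close ``rather than merely be consistent'' is the main obstacle; that is an admission that the contradiction has not been derived, and it is not a routine verification. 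The bound $\tfrac{3\ell-4}{3\ell-1}n$ is tight for the blow-up of $C_5$ joined to a complete $(\ell-2)$-partite graph, so any successful count must be exactly sharp there, and the ingredients you list (the bound $|W_i|<\tfrac{3}{3\ell-1}n$, convexity of $\sum_i|W_i|^2$, anticompleteness of $w$ to one part) do not suffice: they use no information about the edges among the vertices outside $V(H)$ and never invoke your secondary tie-break in the choice of $H$, so they are satisfied by near-extremal configurations and cannot by themselves produce a contradiction. In short, the core of the inductive step is a plan, not a proof, and this is a genuine gap.

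For a route that actually closes this case with the paper's own toolkit: take a counterexample $G$, pass to an edge-maximal $K_{\ell+1}$-free supergraph (which does not decrease the minimum degree and is $\ell$-partite only if $G$ is), and apply Lemma~\ref{Lemma:maximal} to obtain a $5$-wheel-like subgraph $W_{\ell,k}$; one then counts, for each vertex of $G$, how many edges of $W_{\ell,k}$ it controls, splitting according to whether the vertex is adjacent to all of $R=V(Q_1)\cap V(Q_2)$, exactly as is done for the triangle-count version in Section~\ref{SEC:4}. The degree analogue of inequalities \eqref{align2} and \eqref{align3} forces $\delta(G)\le\tfrac{3\ell-4}{3\ell-1}n$, giving the contradiction. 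Your multipartite-subgraph strategy can likely also be completed, but it needs a genuinely new argument (in particular, exploiting adjacencies among the uncovered vertices, as the $C_5$-blow-up example shows), and none is supplied.
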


Note that the bound on minimum degree in Theorem \ref{AES} is tight and the extremal graph is far from the Tur\'{a}n graph $T(n,\ell+1)$ in edit-distance. Motivated by the degree versions of the Erd\H{o}s-Ko-Rado theorem and co-degree
Tur\'an numbers,  Balogh, Lemons and Palmer~\cite{Balogh2021Siam} posed the positive co-degree as a reasonable notion of ``minimum degree'' in a hypergraph.

\begin{definition}[Minimum positive co-degree]\label{Def:positive}
The minimum positive co-degree of a non-empty $r$-graph $\mathcal{H}$, denoted by $\delta_{r-1}^{+}(\mathcal{H})$, is the maximum $k$  such that if $S$ is an $(r-1)$-set contained in a edge of $\mathcal{H}$, then $S$
is contained in at least $k$ distinct edges of $\mathcal{H}$.
\end{definition}

Our second result  gives a 3-graph version of  Andr\'{a}sfai-Erd\H{o}s-S\'{o}s Theorem.

\begin{theorem}\label{main result}
Let $\ell \geq 3$ be an integer, and define
\[
f(\ell)= \begin{cases}
2/7 \quad & \ell=3,\\
\frac{3\ell-7}{3\ell-1} \quad & \ell \geq 4.
\end{cases}
\]
Then every $\mathcal{K}_{\ell+1}^3$-saturated $3$-graph $\mathcal{H}$ on $n$ vertices with $\delta_{2}^{+}(\mathcal{H}) > f(\ell) n$ is $\ell$-partite.
\end{theorem}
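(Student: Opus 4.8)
The plan is to mimic the strategy behind the Andrásfai--Erdős--Sós theorem: show that under the positive co-degree hypothesis, $\mathcal{H}$ cannot contain an odd "obstruction" that would force chromatic-type complexity, and then conclude $\ell$-partiteness via a suitable coloring argument. First I would translate the $\mathcal{K}_{\ell+1}^3$-saturation property into concrete local structure: since $\mathcal{H}$ is $\mathcal{K}_{\ell+1}^3$-free but maximal, for every non-edge $\{x,y,z\}$ there is an $\ell$-set (together with one of $x,y,z$, say) that becomes "pair-covered" upon adding $\{x,y,z\}$; more usefully, $\mathcal{H}$ itself does \emph{not} contain an $(\ell+1)$-set all of whose pairs are covered by edges. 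Define the "link-shadow" or \emph{covering graph} $G$ on $V(\mathcal{H})$ by putting $uv \in E(G)$ whenever $\{u,v\}$ lies in some edge of $\mathcal{H}$. Then $\mathcal{K}_{\ell+1}^3$-freeness says exactly that $G$ is $K_{\ell+1}$-free, and the positive co-degree condition says every pair that appears in $G$ has codegree $>f(\ell)n$ in $\mathcal{H}$, which in particular gives a lower bound on degrees in $G$: each vertex $u$ incident to an edge of $\mathcal{H}$ has $\deg_G(u) \ge$ (something growing with $f(\ell)n$). The rough idea is that $G$ is a dense $K_{\ell+1}$-free graph, so by Andrásfai--Erdős--Sós (Theorem~\ref{AES}) $G$ should be $\ell$-partite, and then one lifts an $\ell$-partition of $G$ to an $\ell$-partition of $\mathcal{H}$.

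In more detail I would proceed as follows. Step 1: Show $\delta(G) > \frac{3\ell-4}{3\ell-1} \cdot (\text{effective }n)$ for the set of non-isolated vertices of $\mathcal{H}$; here one must be careful because a pair $\{u,v\} \in G$ having many co-neighbors in $\mathcal{H}$ gives $u$ many $G$-neighbors, but to match the AES bound one needs to account for the isolated vertices of $\mathcal{H}$ (which are irrelevant to $\ell$-partiteness and can be distributed arbitrarily at the end). The constants $f(\ell)=\frac{3\ell-7}{3\ell-1}$ versus the AES constant $\frac{3\ell-4}{3\ell-1}$ differ by $3/(3\ell-1)$, which is the slack needed to pass from a positive-codegree bound on pairs in $\mathcal{H}$ to a minimum-degree bound on $G$: a pair in an edge of $\mathcal{H}$ being in $>f(\ell)n$ edges contributes $>f(\ell)n$ distinct "third vertices," and via a short averaging/double-counting over the $\le n$ vertices one recovers a $\deg_G$ bound strengthened by roughly $\frac{3}{3\ell-1}n$. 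Step 2: Apply Theorem~\ref{AES} to conclude $G$ is $\ell$-partite with parts $V_1,\dots,V_\ell$. Step 3 (the lifting): show every edge of $\mathcal{H}$ is transversal to the partition $(V_1,\dots,V_\ell)$, i.e.\ has its three vertices in three distinct parts. If some edge $\{a,b,c\}$ had two vertices $a,b$ in the same $V_i$, then $ab \in G$, contradicting that $V_i$ is independent in $G$ — so this step is essentially immediate once Step 2 is in place, and gives that $\mathcal{H}$ is $\ell$-partite.

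The real work, and the main obstacle, will be Step 1 together with the $\ell=3$ exceptional value $2/7$. The naive reduction loses a constant that is not tight, so to get the claimed $f(\ell)$ one cannot simply quote AES as a black box for $G$; instead one likely needs the full strength of the Andrásfai--Erdős--Sós \emph{structure} (the extremal near-examples are blow-ups of $C_5$-type graphs / the graphs $\mathcal{G}_\ell$) and must analyze what $3$-graph $\mathcal{H}$ can sit above such a $G$ while meeting the positive co-degree bound. For $\ell=3$, $K_4$-freeness of $G$ combined with $\delta_2^+(\mathcal{H})>\tfrac{2}{7}n$ must be pushed through by hand: one shows that a $\mathcal{K}_4^3$-saturated $\mathcal{H}$ whose covering graph contains a $C_5$-blow-up structure would force some pair to have co-degree $\le \tfrac{2}{7}n$, ruling it out, so $G$ is triangle-free \emph{and} has large minimum degree, hence bipartite — wait, $\ell=3$ means $G$ is $K_4$-free, not triangle-free, so here one instead must verify directly that the $K_4$-free graphs with $\delta(G)$ just above the relevant threshold are $3$-partite and that the edge case (the non-$3$-partite extremal $K_4$-free graphs, e.g.\ certain Andrásfai graphs) cannot be the covering graph of a $\mathcal{K}_4^3$-saturated $3$-graph with positive co-degree exceeding $\tfrac{2}{7}n$. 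I would handle this by taking a would-be vertex $v$ in such an extremal configuration, examining the link $\mathcal{H}_v$ (a graph on $V\setminus v$), using saturation to fill in forced edges, and deriving a pair with deficient co-degree; the tightness construction in the paper presumably pins down exactly which configuration saturates the bound, and I would reverse-engineer the proof from it. Finally, to establish tightness of $f(\ell)$ I would exhibit, for each $\ell$, a $\mathcal{K}_{\ell+1}^3$-saturated non-$\ell$-partite $3$-graph with $\delta_2^+ = \lfloor f(\ell)n\rfloor$, obtained by blowing up a small non-$\ell$-partite $\mathcal{K}_{\ell+1}^3$-free $3$-graph (the natural candidate being built over the AES extremal graph $C_5$ for $\ell=3$, and over $\mathcal{G}_\ell$ in general) and checking both saturation and the co-degree count.
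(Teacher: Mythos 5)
There is a genuine gap, and it sits exactly where you place ``the real work'': your Step 1 does not and cannot work. The hypothesis $\delta_2^+(\mathcal{H})>f(\ell)n$ gives, for a vertex $u$ lying in an edge, only $\deg_G(u)\gtrsim f(\ell)n=\frac{3\ell-7}{3\ell-1}n$ in the covering graph $G=\partial\mathcal{H}$, which is short of the Andr\'asfai--Erd\H{o}s--S\'os threshold $\frac{3\ell-4}{3\ell-1}n$ by $\frac{3n}{3\ell-1}$, and no averaging or double counting recovers this: the positive co-degree condition is a lower bound on the number of triangles of $G$ through each edge that lies in a triangle, not a degree condition, and the two are genuinely incomparable. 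Indeed, in the paper's extremal construction (a blow-up of a $5$-wheel-like graph) the shadow has minimum degree exactly $\frac{3\ell-4}{3\ell-1}n$ while the positive co-degree is only $f(\ell)n$, so Theorem~\ref{AES} applied to $G$ is useless even at the boundary; the theorem being proved is emphatically \emph{not} a reduction to AES, and its extremal example is far from $\ell$-partite precisely because the shadow's chromatic number is $\ell+1$. Your acknowledgement that ``one cannot simply quote AES as a black box'' followed by ``one likely needs the full strength of the AES structure'' and a plan to reverse-engineer the $\ell=3$ case from the tightness construction is not an argument; as written, the proposal has no mechanism for deriving a contradiction from a non-$\ell$-partite $G$.

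What the paper actually does, and what is missing from your outline, is the following. By Lemma~\ref{Maximal Kl if and only if maximal Kr}, $\partial\mathcal{H}$ is $K_3$-maximal and $K_{\ell+1}$-free, so one can add edges to reach a $K_{\ell+1}$-saturated supergraph $G$ \emph{without creating any new triangle}; hence the quantity $t^+(G)$ (the minimum number of triangles through an edge that lies in a triangle) still exceeds $f(\ell)n$. Brandt's structural lemma (Lemma~\ref{Lemma:maximal}) then says a $K_{\ell+1}$-saturated graph is either complete $\ell$-partite or contains a $5$-wheel-like subgraph $W_{\ell,k}$. Choosing a minimum-order such $W_{\ell,k}$, the proof double counts, over all vertices $w$, the edges of carefully chosen subsets of $E(W_{\ell,k})$ whose endpoints are both adjacent to $w$, using $K_{\ell+1}$-freeness to show every vertex misses many of these edges; comparing with the lower bound $|E_i|\,t^+(G)$ and splitting into cases $k=0$, $k=1$, $2\le k\le\frac{\ell+3}{4}$, $k>\frac{\ell+3}{4}$ yields $t^+(G)\le f(\ell)n$, a contradiction. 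Your Step 3 (lifting an $\ell$-partition of the shadow to $\mathcal{H}$, as in Observation~\ref{l-partite}) is fine, but the heart of the proof --- the passage to a saturated supergraph preserving the triangle statistic, the invocation of the $5$-wheel-like structure, and the counting against it --- is absent from your proposal, so the argument does not go through.
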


We remark that this is  the first stability result on minimum positive co-degree for hypergraphs. We also show that this bound in Theorem \ref{main result} is best possible, and the extremal construction is far from  $T_3(n,\ell+1)$ in edit-distance.

The remainder of this paper is organized as follows. In Section~\ref{SEC:Prelim}, we introduce some preliminary definitions and results. In Section~\ref{SEC:3}, we prove Theorem~\ref{th2} and show that the bound is tight using a construction given by Popielarz, Sahasrabudhe and Snyder \cite{popielarz2018stability}. In Section~\ref{SEC:4}, we prove Theorem~\ref{main result} and show its tightness. The final section contains some concluding remarks.

\section{Preliminaries}\label{SEC:Prelim}
In this section, we  introduce some preliminary definitions and results that will be used later. Let $r\ge 2$ be an  integer and $\mathcal{H}$ be an $r$-graph. We use $v(\mathcal{H})$ and $|\mathcal{H}|$ (or $e(\mathcal{H})$ if $r=2$) to denote the number of vertices and edges in $\mathcal{H}$, respectively. Let $G$ be a graph. For $v \in V(G)$, we use $N_G(v)$ and $d_G(v)$ to denote the \emph{neighborhood} and \emph{degree} of $v$ in $G$, respectively.  We use $\Delta(G)$ (or $\delta(G)$) denote the maximum (or minimum) degree of $G$.  The \emph{complement} of $G$, denoted by $\overline{G}$, is a graph on the same set of vertices of $G$ such that there is  an edge between two vertices $u,v$ in $\overline{G}$, if and only if there is no edge in between $u,v$  in $G$. For subsets $S, T \subseteq V(G)$, we denote by $G[S]$ the induced subgraph of $G$ with $S$, and $G[S, T]$ the induced bipartite subgraph of $G$ with two parts $S$ and $T$. Let $N_G(S) = \cap_{v\in S}N_G(v)$ be the common neighbours of $S$ in $G$, and $I_G(S)$ be  the collection of edges of $G$ covered by $S$. For an edge $e \in E(G)$,  let $t_G(e)$ be the number of triangles containing $e$, and let $t^{+}(G)$ be the maximum $k$ such that  if an edge $e$ is contained in a triangle of $G$, then $e$ is contained in at least $k$ distinct triangles of $G$. We will  omit the subscript when it is clear to which graph we refer.

If $G$ is an $\ell$-partite graph with the vertex partition $V_1,\dots,V_\ell$, then we use $\widehat{G}$ to denote the $\ell$-\emph{partite complement} containing all the non-edges of $G$ between vertex class $V_i$ and $V_j$ for $1\leq i < j\leq \ell$ of $G$. An \emph{$(\ell+1)$-saturating edge} in $G$ is an edge of $\overline{G}$  whose addition to $G$  forms at least one copy of $K_{\ell+1}$ in $G$.

For $\ell \geq 2$ and $ 0 \leq k \leq \ell -2$, a \emph{$5$-wheel-like} graph $W_{\ell,k}$ is a graph consisting of two cliques $Q_1$, $Q_2$ of order $\ell - 1$, which intersect in exactly $k$ vertices, together with a vertex $v$, adjacent to all vertices of $Q_1$ and $Q_2$ and an edge $u_1u_2$, where $u_1$ is adjacent to the vertices of $Q_1$ and $u_2$ is adjacent to the vertices of $Q_2$ (see Figure 1). For example, $W_{2,0}$ is the $5$-cycle, and $W_{3,1}$ is the wheel with $5$ spokes. We call the vertex $v$ the \emph{top} and the edge $u_1u_2$ the \emph{bottom} of $W_{\ell,k}$.

\begin{figure}[htbp]
\centering
\tikzset{every picture/.style={line width=0.75pt}} 

\begin{tikzpicture}[x=0.75pt,y=0.75pt,yscale=-1,xscale=1]

\draw  [fill={rgb, 255:red, 0; green, 0; blue, 0 }  ,fill opacity=1 ][line width=0.75]  (370.8,225.9) .. controls (370.8,223.64) and (372.64,221.8) .. (374.9,221.8) .. controls (377.16,221.8) and (379,223.64) .. (379,225.9) .. controls (379,228.16) and (377.16,230) .. (374.9,230) .. controls (372.64,230) and (370.8,228.16) .. (370.8,225.9) -- cycle ;
\draw  [fill={rgb, 255:red, 0; green, 0; blue, 0 }  ,fill opacity=1 ][line width=0.75]  (420.8,125.9) .. controls (420.8,123.64) and (422.64,121.8) .. (424.9,121.8) .. controls (427.16,121.8) and (429,123.64) .. (429,125.9) .. controls (429,128.16) and (427.16,130) .. (424.9,130) .. controls (422.64,130) and (420.8,128.16) .. (420.8,125.9) -- cycle ;
\draw  [fill={rgb, 255:red, 0; green, 0; blue, 0 }  ,fill opacity=1 ][line width=0.75]  (221.8,124.9) .. controls (221.8,122.64) and (223.64,120.8) .. (225.9,120.8) .. controls (228.16,120.8) and (230,122.64) .. (230,124.9) .. controls (230,127.16) and (228.16,129) .. (225.9,129) .. controls (223.64,129) and (221.8,127.16) .. (221.8,124.9) -- cycle ;
\draw  [fill={rgb, 255:red, 0; green, 0; blue, 0 }  ,fill opacity=1 ][line width=0.75]  (321.8,36.9) .. controls (321.8,34.64) and (323.64,32.8) .. (325.9,32.8) .. controls (328.16,32.8) and (330,34.64) .. (330,36.9) .. controls (330,39.16) and (328.16,41) .. (325.9,41) .. controls (323.64,41) and (321.8,39.16) .. (321.8,36.9) -- cycle ;
\draw  [fill={rgb, 255:red, 0; green, 0; blue, 0 }  ,fill opacity=1 ][line width=0.75]  (271.8,225.9) .. controls (271.8,223.64) and (273.64,221.8) .. (275.9,221.8) .. controls (278.16,221.8) and (280,223.64) .. (280,225.9) .. controls (280,228.16) and (278.16,230) .. (275.9,230) .. controls (273.64,230) and (271.8,228.16) .. (271.8,225.9) -- cycle ;
\draw    (325.9,36.9) -- (424.9,124.3) ;
\draw   (226.9,125.5) .. controls (226.9,116.39) and (256.87,109) .. (293.85,109) .. controls (330.83,109) and (360.8,116.39) .. (360.8,125.5) .. controls (360.8,134.61) and (330.83,142) .. (293.85,142) .. controls (256.87,142) and (226.9,134.61) .. (226.9,125.5) -- cycle ;
\draw   (290.8,125.5) .. controls (290.8,116.39) and (320.35,109) .. (356.8,109) .. controls (393.25,109) and (422.8,116.39) .. (422.8,125.5) .. controls (422.8,134.61) and (393.25,142) .. (356.8,142) .. controls (320.35,142) and (290.8,134.61) .. (290.8,125.5) -- cycle ;
\draw  [fill={rgb, 255:red, 0; green, 0; blue, 0 }  ,fill opacity=1 ][line width=0.75]  (247.8,125) .. controls (247.8,123.34) and (249.14,122) .. (250.8,122) .. controls (252.46,122) and (253.8,123.34) .. (253.8,125) .. controls (253.8,126.66) and (252.46,128) .. (250.8,128) .. controls (249.14,128) and (247.8,126.66) .. (247.8,125) -- cycle ;
\draw  [fill={rgb, 255:red, 0; green, 0; blue, 0 }  ,fill opacity=1 ][line width=0.75]  (272.8,125) .. controls (272.8,123.34) and (274.14,122) .. (275.8,122) .. controls (277.46,122) and (278.8,123.34) .. (278.8,125) .. controls (278.8,126.66) and (277.46,128) .. (275.8,128) .. controls (274.14,128) and (272.8,126.66) .. (272.8,125) -- cycle ;
\draw  [fill={rgb, 255:red, 0; green, 0; blue, 0 }  ,fill opacity=1 ][line width=0.75]  (309.8,125) .. controls (309.8,123.34) and (311.14,122) .. (312.8,122) .. controls (314.46,122) and (315.8,123.34) .. (315.8,125) .. controls (315.8,126.66) and (314.46,128) .. (312.8,128) .. controls (311.14,128) and (309.8,126.66) .. (309.8,125) -- cycle ;
\draw  [fill={rgb, 255:red, 0; green, 0; blue, 0 }  ,fill opacity=1 ][line width=0.75]  (334.8,125) .. controls (334.8,123.34) and (336.14,122) .. (337.8,122) .. controls (339.46,122) and (340.8,123.34) .. (340.8,125) .. controls (340.8,126.66) and (339.46,128) .. (337.8,128) .. controls (336.14,128) and (334.8,126.66) .. (334.8,125) -- cycle ;
\draw  [fill={rgb, 255:red, 0; green, 0; blue, 0 }  ,fill opacity=1 ][line width=0.75]  (396.8,125) .. controls (396.8,123.34) and (398.14,122) .. (399.8,122) .. controls (401.46,122) and (402.8,123.34) .. (402.8,125) .. controls (402.8,126.66) and (401.46,128) .. (399.8,128) .. controls (398.14,128) and (396.8,126.66) .. (396.8,125) -- cycle ;
\draw  [fill={rgb, 255:red, 0; green, 0; blue, 0 }  ,fill opacity=1 ][line width=0.75]  (371.8,125) .. controls (371.8,123.34) and (373.14,122) .. (374.8,122) .. controls (376.46,122) and (377.8,123.34) .. (377.8,125) .. controls (377.8,126.66) and (376.46,128) .. (374.8,128) .. controls (373.14,128) and (371.8,126.66) .. (371.8,125) -- cycle ;
\draw    (225.9,123) -- (325.9,36.9) ;
\draw    (225.9,124.9) -- (275.9,225.9) ;
\draw    (374.9,225.9) -- (275.9,225.9) ;
\draw    (374.9,225.9) -- (424.9,125.9) ;
\draw    (325.9,36.9) -- (290.8,125.5) ;
\draw    (275.9,225.9) -- (290.8,125.5) ;
\draw    (275.9,225.9) -- (360.8,127.5) ;
\draw    (374.9,225.9) -- (290.8,127.5) ;
\draw    (325.9,36.9) -- (360.8,125.5) ;
\draw    (374.9,225.9) -- (360.8,125.5) ;

\draw (267,233) node [anchor=north west][inner sep=0.75pt]    {$u_{1}$};
\draw (366,233) node [anchor=north west][inner sep=0.75pt]    {$u_{2}$};
\draw (320,19.4) node [anchor=north west][inner sep=0.75pt]    {$v$};
\draw (198,117.4) node [anchor=north west][inner sep=0.75pt]    {$Q_{1}$};
\draw (431,117.4) node [anchor=north west][inner sep=0.75pt]    {$Q_{2}$};

\end{tikzpicture}
\caption{A $5$-wheel-like graph.}
\end{figure}
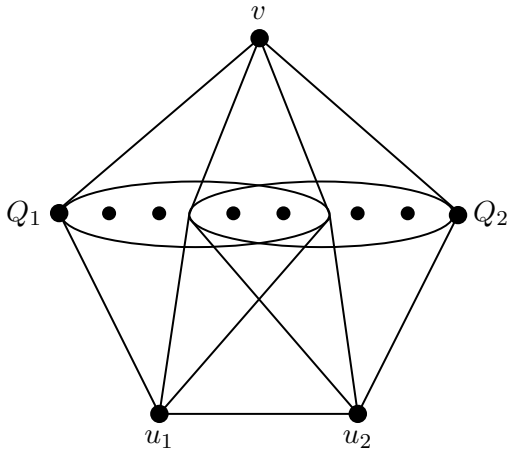

Brandt~\cite{B03com} generalized the observation that every maximal triangle-free graph is either complete bipartite or contains a $5$-cycle and proved the following result.

\begin{lemma}[Brandt~\cite{B03com}]\label{Lemma:maximal}
Let $G$ be a $K_{\ell+1}$-saturated graph. Then either $G$ is complete $\ell$-partite, or $G$ contains a $5$-wheel-like subgraph.
\end{lemma}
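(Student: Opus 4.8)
The plan is to prove the dichotomy directly: assuming $G$ is $K_{\ell+1}$-saturated but \emph{not} complete $\ell$-partite, I will produce a copy of $W_{\ell,k}$ for some $0\le k\le \ell-2$. (I take $v(G)\ge \ell$, which costs nothing since the lemma is only applied with $v(G)\to\infty$.) The first step is to reduce to a local obstruction. Recall that a graph is complete multipartite precisely when the relation ``equal or non-adjacent'' is transitive, i.e.\ when it has no induced subgraph consisting of an edge together with a vertex non-adjacent to both its endpoints (an induced $K_2\cup K_1$). I would first check that a $K_{\ell+1}$-saturated complete multipartite graph is automatically complete $\ell$-partite: if it has $m$ nonempty parts then $m\le\ell$ (it contains $K_m$ but not $K_{\ell+1}$), and if $m\le\ell-1$ then some part contains two vertices $x,y$ whose common neighbourhood induces a complete $(m-1)$-partite graph of clique number $m-1\le\ell-2$, so adding $xy$ creates no $K_{\ell+1}$, contradicting saturation. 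Hence $G$, not being complete $\ell$-partite, is not complete multipartite, and so there are distinct vertices $v,u_1,u_2$ with $u_1u_2\in E(G)$ and $vu_1,vu_2\notin E(G)$.

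Next I would invoke saturation twice, once at each of the non-edges $vu_1$ and $vu_2$. Adding $vu_1$ creates a $K_{\ell+1}$, which must use the new edge; its other $\ell-1$ vertices form a set $Q_1$ that is a clique of $G$ every vertex of which is adjacent in $G$ to both $v$ and $u_1$. Symmetrically, adding $vu_2$ produces a clique $Q_2$ of order $\ell-1$ in $G$ all of whose vertices are adjacent to $v$ and to $u_2$. A quick check shows $\{v,u_1,u_2\}$ is disjoint from $Q_1\cup Q_2$: $v$ lies in neither $Q_i$ since its members are neighbours of $v$; $u_1\notin Q_1$ and $u_2\notin Q_2$ because there are no loops; and $u_1\notin Q_2$, $u_2\notin Q_1$ because members of $Q_2$ are adjacent to $v$ while $u_1$ is not, and likewise for $u_2$.

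Finally, set $k:=|Q_1\cap Q_2|$, so $0\le k\le \ell-1$. If $k=\ell-1$ then $Q_1=Q_2=:Q$, and $Q\cup\{u_1,u_2\}$ would be a clique of order $\ell+1$ in $G$ (every vertex of $Q$ is adjacent to $u_1$ and to $u_2$, and $u_1u_2\in E(G)$), contradicting that $G$ is $K_{\ell+1}$-free; hence $k\le \ell-2$. On the vertex set $Q_1\cup Q_2\cup\{v,u_1,u_2\}$ all edges demanded by the definition of $W_{\ell,k}$ are present in $G$ — the two $(\ell-1)$-cliques $Q_1,Q_2$ overlapping in exactly $k$ vertices, all edges from $v$ to $Q_1\cup Q_2$, all edges from $u_1$ to $Q_1$, all edges from $u_2$ to $Q_2$, and the edge $u_1u_2$ — so $G$ contains a $W_{\ell,k}$ subgraph. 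I do not expect a genuine obstacle: the two points needing a little care are the reduction in the first step (ruling out the degenerate complete-multipartite case via saturation) and keeping track that the extracted vertices are pairwise distinct, while the inequality $k\le\ell-2$ is the single place where $K_{\ell+1}$-freeness is used a second time.
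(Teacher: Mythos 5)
Your argument is correct. Note that the paper itself does not prove this lemma: it is quoted from Brandt \cite{B03com}, so there is no internal proof to compare against; your write-up is in fact the standard argument (and essentially Brandt's, generalizing the triangle-free case): rule out the complete multipartite case via saturation, extract an induced $K_2\cup K_1$ on $\{v,u_1,u_2\}$, apply saturation to the two non-edges $vu_1,vu_2$ to obtain the $(\ell-1)$-cliques $Q_1,Q_2$ in their common neighbourhoods, check disjointness from $\{v,u_1,u_2\}$, and use $K_{\ell+1}$-freeness once more to force $|Q_1\cap Q_2|\le\ell-2$. The only caveat is the one you flag yourself: the reduction needs $v(G)\ge\ell$ (otherwise, e.g., a small complete graph is vacuously saturated and neither alternative need hold under a strict reading of ``complete $\ell$-partite'' with nonempty parts), which is harmless for the way the lemma is used in this paper.
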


For  graphs $G$ and $H$, let $N(H, G)$ denote the number of copies of $H$ in $G$. The main idea in the proof of Theorem~\ref{th2} is to consider $K_{\ell+1}$-free graphs that maximizing $K_r$. The following definition plays a key role in our proof.

\begin{definition}[$K_r$-maximal, $K_{\ell+1}$-free graph]\label{Kr maximal}Let $\ell\ge r\ge 2$ be integers.
A graph $G$ is $K_r$-maximal, $K_{\ell+1}$-free if $G$ is $K_{\ell+1}$-free but $G+E'$ contains a copy of $K_{\ell+1}$ for each $E'\subseteq E(\overline{G})$ with   $N(K_r, G+E')>N(K_r, G)$.
\end{definition}

The following lemma, given by Popielarz, Sahasrabudhe and Snyder \cite{popielarz2018stability},   will help us to change an $\ell$-partite graph to a complete $\ell$-partite graph by deleting small number of vertices.
\begin{lemma}[Popielarz, Sahasrabudhe and Snyder \cite{popielarz2018stability}]\label{lem}
Let $\ell, t \geq 1$ be an integer with $\ell - t \geq 1$ and let $G$ be a $K_{\ell + 1}$-free, $(\ell+1)$-partite graph with vertex classes $A, B, X_{1},\ldots,X_{\ell - 1}$ such that  the following conditions hold:
\begin{itemize}
  \item [1.] $E \subseteq E_{\widehat{G}}(A, B)$ is a collection of non-edges between $A, B$.
  \item [2.] There exist $K_{\ell + 1 - t}$-free subgraphs $H_{1},\ldots,H_{s} \subseteq G$ such that every edge of $E$ is $(\ell + 1 - t)$-saturating in at least one of the graphs $H_{1},\ldots,H_{s}$.
\end{itemize}
Then, there exists a set $R' \subseteq A \cup B$ covering every edge of $E$ with
$$|I_{\widehat{G}}(R')| \geq c_{\ell,t}'s^{-\frac{1}{\ell - t}}|R'|^{\frac{\ell + 1 - t}{\ell - t}}$$
 where $c'_{\ell,t}$ is a constant depending only on $\ell,t$.
\end{lemma}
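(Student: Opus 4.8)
The plan is to induct on $\ell-t\,(\ge 1)$, peeling off one part $X_j$ at a time by passing to a vertex link. Write $m=\ell+1-t$, so the $H_i$ are $K_m$‑free and $m-1=\ell-t$. Both the fractional exponent $\tfrac{m}{m-1}$ and the factor $s^{-1/(m-1)}$ will be produced by the power‑mean inequality $\sum_{j=1}^{N}y_j^{\alpha}\ge N^{1-\alpha}\big(\sum_{j}y_j\big)^{\alpha}$ (convexity of $y\mapsto y^{\alpha}$, $\alpha>1$), applied at two points of the argument.

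\emph{Step 1: fixing witnesses.} For each $e=\{a,b\}\in E$ choose an index $i(e)$ for which $e$ is $m$‑saturating in $H_{i(e)}$, and an $(m-2)$‑clique $Q_e\subseteq H_{i(e)}$ such that $a$ and $b$ are joined \emph{in $H_{i(e)}$} to every vertex of $Q_e$; so $Q_e\cup\{a,b\}$ spans a $K_m$ once $e$ is added. Since $G$ (hence $H_{i(e)}$) is $(\ell+1)$‑partite and $Q_e$ avoids $A$ and $B$, the clique $Q_e$ meets exactly $m-2$ of $X_1,\dots,X_{\ell-1}$. There are $\binom{\ell-1}{m-2}$ choices for this set of parts, and, conditioned on any one of them, a constant fraction of the corresponding edges have $Q_e$ meeting a prescribed part; deleting all other edges of $E$ costs only a constant factor, absorbed into $c'_{\ell,t}$. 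Hence we may assume that every $Q_e$ contains a vertex $x_e\in X_{\ell-1}$. We record the one consequence of $K_{\ell+1}$‑freeness we need: if $Q$ is a clique of $G$ of size $m-2$ inside $X_1\cup\cdots\cup X_{\ell-1}$, then every $a'\in A$ and $b'\in B$ both complete (in $G$) to $Q$ satisfy $a'b'\notin G$, hence $a'b'\in E_{\widehat G}(A,B)$ — for $t=0$ because $Q\cup\{a',b'\}$ would otherwise be a $K_{\ell+1}$, and for $t\ge1$ a similar conclusion holds after an additional argument using that the edges of $E$ are saturating. This is what makes a parsimonious cover of $E$ incident to many edges of $\widehat G$.

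\emph{Step 2: the inductive step.} Suppose $\ell-t\ge2$ and fix a part, say $X_{\ell-1}$, met by every $Q_e$. For $x\in X_{\ell-1}$ set $E_x=\{e\in E:x_e=x\}$ and $G_x=G[N_G(x)]$. Then $G_x$ is $K_{\ell}$‑free and is $\ell$‑partite with parts $A\cap N_G(x),\,B\cap N_G(x),\,X_1\cap N_G(x),\dots,X_{\ell-2}\cap N_G(x)$; each $H_i[N_{H_i}(x)]$ is a $K_{\ell-t}$‑free subgraph of $G_x$; and every $e=\{a,b\}\in E_x$ lies with both endpoints in $N_{H_{i(e)}}(x)$, is a non‑edge of $G_x$ between $A\cap N_G(x)$ and $B\cap N_G(x)$, and is $(\ell-t)$‑saturating in $H_{i(e)}[N_{H_{i(e)}}(x)]$, witnessed by $Q_e\setminus\{x\}$. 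Thus the hypotheses hold for $G_x$ with $(\ell,t)$ replaced by $(\ell-1,t)$ and the \emph{same} value of $s$, and the inductive hypothesis gives a set $R'_x\subseteq(A\cup B)\cap N_G(x)$ covering $E_x$ with $|I_{\widehat{G_x}}(R'_x)|\ge c'_{\ell-1,t}\,s^{-1/(\ell-1-t)}\,|R'_x|^{(\ell-t)/(\ell-1-t)}$. Since both endpoints of any edge of $\widehat{G_x}$ lie in $N_G(x)$ and in distinct parts of $G$, $\widehat{G_x}\subseteq\widehat G$, so $I_{\widehat{G_x}}(R'_x)\subseteq I_{\widehat G}(R')$ for any $R'\supseteq R'_x$. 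The base case $\ell-t=1$ (where $Q_e=\varnothing$) is handled directly from the structural observation of Step 1.

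\emph{Step 3: combining, and the main obstacle.} We build $R'$ as a suitable sub‑union of the $R'_x$ so that $R'$ still covers $E$; writing $n_x=|R'_x|$, $M=\sum_x n_x\ge|R'|$, $N=\#\{x\ \text{used}\}$, and $\alpha=\tfrac{\ell-t}{\ell-1-t}$, convexity gives $\sum_x n_x^{\alpha}\ge N^{1-\alpha}M^{\alpha}$. A short computation shows $\alpha>\tfrac{\ell+1-t}{\ell-t}$, and that as soon as $N=O\!\big(M^{1/(\ell-t)}\big)$ and each non‑edge of $\widehat G$ is charged to only $O(1)$ of the links, the inherited bound improves to $|I_{\widehat G}(R')|\ge c'_{\ell,t}\,s^{-1/(\ell-t)}|R'|^{(\ell+1-t)/(\ell-t)}$ — the extra factor of (roughly) $s$ released by the power‑mean step being exactly what upgrades $s^{-1/(\ell-1-t)}$ to $s^{-1/(\ell-t)}$. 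The crux, and the step I expect to be the real work, is arranging all three requirements at once: choosing the assignment $e\mapsto x_e$ and then pruning the links so that the remaining pieces $I_{\widehat{G_x}}(R'_x)$ are essentially disjoint, jointly cover $E$, and number only $O\!\big(M^{1/(\ell-t)}\big)$. This is done greedily, using the co‑neighbourhood structure from Step 1 (a vertex that is a common neighbour of many endpoint‑pairs is forced non‑adjacent to yet more such pairs) so that a few ``popular'' links already account for almost all of $E$. Granting this pruning, the power‑mean estimate and the induction close immediately, and with it the proof.
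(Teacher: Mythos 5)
Two preliminary remarks: this lemma is imported from Popielarz--Sahasrabudhe--Snyder and the paper contains no proof of it, so there is nothing in-paper to match your argument against; judged on its own terms, your proposal leaves the decisive steps unproven. The first gap is the structural claim closing Step 1. For $t\ge 1$ the witness clique $Q_e$ has only $\ell-1-t$ vertices, so $Q_e\cup\{a',b'\}$ spans $\ell+1-t<\ell+1$ vertices and the $K_{\ell+1}$-freeness of $G$ gives nothing; the $K_{\ell+1-t}$-freeness of $H_{i(e)}$ only forbids $a'b'$ from being an edge of $H_{i(e)}$, and since the $H_i$ are merely subgraphs of $G$ this does not make $a'b'$ a non-edge of $G$, i.e.\ an edge of $\widehat G$. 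The parenthetical ``after an additional argument using that the edges of $E$ are saturating'' is precisely the nontrivial content of the lemma (how saturation inside the $H_i$ forces non-edges of $G$ incident to the cover), and it is asserted, not proven; your base case $\ell-t=1$, which is exactly a case with $t\ge 1$, rests entirely on this unproven claim.

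The second gap is Step 3, which you concede rather than carry out (``Granting this pruning\dots''). To close the induction you need simultaneously: (i) the retained links still cover \emph{every} edge of $E$ (note also that discarding edges of $E$ in Step 1 is not permissible for a covering statement; one must instead split $E$ into $O_{\ell}(1)$ subfamilies and take the union of the covers, losing only a constant); (ii) each edge of $\widehat G$ is charged to $O(1)$ of the links, without which $\sum_x |I_{\widehat{G_x}}(R'_x)|$ does not lower bound $|I_{\widehat G}(R')|$; and (iii) the number $N$ of links used satisfies $N\lesssim (M/s)^{1/(\ell-t)}$ --- redoing your power-mean computation shows that your stated requirement $N=O\big(M^{1/(\ell-t)}\big)$ is not strong enough to upgrade the inherited factor $s^{-1/(\ell-1-t)}$ to $s^{-1/(\ell-t)}$. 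Worse, (iii) cannot in general be ``arranged greedily'': if $E$ is a matching whose edges have pairwise distinct (and unique) witness vertices in $X_{\ell-1}$, every admissible assignment $e\mapsto x_e$ uses $N$ comparable to $|E|\ge |R'|$ links, far exceeding $M^{1/(\ell-t)}$. So the link-peeling induction as described does not close, and the proposal does not constitute a proof of the lemma.
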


Now we return to $r$-graphs. Let $r \ge 2$ be an  integer and $\mathcal{H}$ be an $r$-graph. For a vertex $u \in V(\mathcal{H})$,  we use $d_{\mathcal{H}}(u)$ to denote the \emph{degree} of $u$, and $\delta(\mathcal{H})$ to denote the \emph{minimum degree} of $\mathcal{H}$.
The \emph{link} of $u$ is defined as
$$L_{\mathcal{H}}(u) = \{A\subset V(\mathcal{H})\colon A\cup\{u\}\in\mathcal{H}\}.$$
Clearly, $L_{\mathcal{H}}(u)$ is an $(r-1)$-graph with $d_{\mathcal{H}}(u)$ edges. For  $1 \leq i \leq r-1$ the {\em $i$-th shadow} of $\mathcal{H}$ is the $(r-i)$-graph defined by
\[
\partial_i\mathcal{H} = \left\{A \in \binom{V\left(\mathcal{H}\right)}{r-i} : \exists B \in \mathcal{H}\,\, {\rm such}\,\, {\rm that}\,\, A \subset B\right\}.
\]
We use $\partial\mathcal{H}=\partial_1\mathcal{H}$ to denote the \emph{shadow} of $\mathcal{H}$. Note that $\partial_{r-2}\mathcal{H}$ is a graph, and an edge in $\mathcal{H}$  creates a copy of  $K_r$ in $\partial_{r-2}\mathcal{H}$. The following easy observations give relationships between  $\mathcal{H}$ and $\partial_{r-2}\mathcal{H}$.
\begin{observation}[Liu \cite{liu2021new}] \label{ob1}
Let $\mathcal{H}$  be an $r$-graph. Then the following holds.
\begin{itemize}
  \item [(a)] $\mathcal{H}$ is $\mathcal{K}_{\ell + 1}^{r}$-free iff $\partial_{r - 2}\mathcal{H}$ is $K_{\ell + 1}$-free.
  \item [(b)] The number of edges in $\mathcal{H}$ is at most the number of copies of $K_{r}$ in $\partial_{r - 2}\mathcal{H}$.
\end{itemize}
\end{observation}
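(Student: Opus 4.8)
The entire statement rests on a single translation, which I would record first: for vertices $x,y\in V(\mathcal{H})$, the pair $\{x,y\}$ is an edge of $\partial_{r-2}\mathcal{H}$ if and only if some edge of $\mathcal{H}$ contains both $x$ and $y$. This is immediate from the definition of the shadow with $r-i=2$, and it is the only input needed.

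For part (b), I would exhibit an injection from $\mathcal{H}$ into the set of copies of $K_r$ in $\partial_{r-2}\mathcal{H}$ by sending an edge $e=\{v_1,\dots,v_r\}\in\mathcal{H}$ to the clique on the vertex set $\{v_1,\dots,v_r\}$. This is well defined because each of the $\binom{r}{2}$ pairs inside $e$ is covered by the edge $e$ itself, hence lies in $\partial_{r-2}\mathcal{H}$, so the $r$ vertices of $e$ induce a $K_r$ there; and it is injective because the vertex set of the image clique recovers $e$. Therefore $|\mathcal{H}|\le N(K_r,\partial_{r-2}\mathcal{H})$.

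For part (a) I would prove both implications by contraposition using the translation above. If $\partial_{r-2}\mathcal{H}$ contains a copy of $K_{\ell+1}$ on an $(\ell+1)$-set $S$, then for each pair $\{x,y\}\subseteq S$ I pick an edge $e_{xy}\in\mathcal{H}$ with $\{x,y\}\subseteq e_{xy}$ and let $F$ be the collection of these chosen edges; then $F\subseteq\mathcal{H}$ is an $r$-graph with at most $\binom{\ell+1}{2}$ edges (distinct pairs may reuse the same edge), $S\subseteq V(F)$, and by construction every pair of $S$ is covered by an edge of $F$, so $F\in\mathcal{K}_{\ell+1}^r$ and $\mathcal{H}$ is not $\mathcal{K}_{\ell+1}^r$-free. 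Conversely, if $\mathcal{H}$ contains a copy of some $F\in\mathcal{K}_{\ell+1}^r$ then, since the witnessing condition is isomorphism-invariant, I may assume $F\subseteq\mathcal{H}$ with a witnessing $(\ell+1)$-set $S$; each pair of $S$ lies in an edge of $F\subseteq\mathcal{H}$ and hence is an edge of $\partial_{r-2}\mathcal{H}$, so $S$ spans a $K_{\ell+1}$ in $\partial_{r-2}\mathcal{H}$. Combining the two contrapositives gives the equivalence in (a).

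This is a bookkeeping lemma and there is essentially no obstacle; the only points that require a little care are that the map in (b) is injective (so that the inequality points in the right direction), that the collection of covering edges constructed in (a) may have strictly fewer than $\binom{\ell+1}{2}$ edges, which is still allowed by the ``at most $\binom{\ell+1}{2}$ edges'' clause in the definition of $\mathcal{K}_{\ell+1}^r$, and that ``contains a copy of a member'' reduces to ``contains a member as a subhypergraph'' because the witness condition is preserved by isomorphism.
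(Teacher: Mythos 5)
Your proof is correct. The paper itself gives no argument for this observation—it is stated as an easy fact and attributed to Liu—so there is nothing to compare against; your verification (the pair-level translation $\{x,y\}\in\partial_{r-2}\mathcal{H}$ iff some edge of $\mathcal{H}$ covers $\{x,y\}$, the injective map $e\mapsto K_r$ on $V(e)$ for (b), and the two contrapositives for (a), choosing one covering edge per pair of the witnessing $(\ell+1)$-set) is exactly the routine argument the citation stands in for, and your side remarks about injectivity, the ``at most $\binom{\ell+1}{2}$ edges'' clause, and isomorphism-invariance are the right points to check.
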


\begin{observation}\label{l-partite}
Let ${\ell} \geq r \geq 2$ be integers, and $\mathcal{H}$  be an $r$-graph. Then  $\mathcal{H}$ is $\ell$-partite if and only if $\partial_{r - 2}\mathcal{H}$ is $\ell$-partite.
\end{observation}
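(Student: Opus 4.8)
The plan is to exhibit a single vertex partition that simultaneously witnesses $\ell$-partiteness of $\mathcal{H}$ and of $\partial_{r-2}\mathcal{H}$, using that by definition $\partial_{r-2}\mathcal{H}$ is exactly the graph on $V(\mathcal{H})$ whose edges are the pairs of vertices lying together in some common hyperedge of $\mathcal{H}$. Both implications then reduce to translating between ``no hyperedge meets a class twice'' and ``adjacent vertices get different colours''.

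For the forward direction, I would assume $\mathcal{H}$ is $\ell$-partite with vertex classes $V_1,\dots,V_\ell$, so that every $B\in\mathcal{H}$ meets each $V_i$ in at most one vertex. Given any edge $\{u,v\}$ of $\partial_{r-2}\mathcal{H}$, by definition there is some $B\in\mathcal{H}$ with $\{u,v\}\subseteq B$; since $B$ has at most one vertex in each class, $u$ and $v$ lie in distinct classes. Hence $V_1,\dots,V_\ell$ is a proper $\ell$-colouring of $\partial_{r-2}\mathcal{H}$, so $\partial_{r-2}\mathcal{H}$ is $\ell$-partite. For the reverse direction, assume $\partial_{r-2}\mathcal{H}$ is $\ell$-partite with classes $V_1,\dots,V_\ell$. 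Take any $B\in\mathcal{H}$ and any two distinct $u,v\in B$: then $\{u,v\}$ is an edge of $\partial_{r-2}\mathcal{H}$, hence $u,v$ lie in distinct classes. So the $r$ vertices of $B$ occupy $r$ pairwise distinct classes, and since $\ell\ge r$ this exhibits $\mathcal{H}$ as $\ell$-partite with the same partition.

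The argument is essentially definitional, so I do not expect a genuine obstacle; the only point needing a word of care is the bookkeeping of vertices of $\mathcal{H}$ not covered by any hyperedge (equivalently, isolated vertices of $\partial_{r-2}\mathcal{H}$), which are unconstrained on both sides and may be placed in an arbitrary class without affecting either condition.
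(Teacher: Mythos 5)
Your proof is correct and essentially the same as the paper's: both directions use the same vertex partition and the definitional translation between ``every hyperedge meets each class at most once'' and ``every pair inside a hyperedge is an edge of $\partial_{r-2}\mathcal{H}$ joining distinct classes'' (the paper phrases the reverse direction as a contradiction, but the content is identical). Your remark about isolated vertices is a fine, if inessential, point of care.
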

\begin{proof}
If $\mathcal{H}$ is $\ell$-partite with the vertex partition $V(\mathcal{H}) = V_1 \cup \dots \cup V_\ell$, then $|e\cap V_i| \leq 1$ for each $e\in \mathcal{H}$ and $i\in [\ell]$, which implies $\partial_{r - 2}\mathcal{H}$ is $\ell$-partite with the vertex partition $V(\mathcal{H}) = V_1 \cup \dots \cup V_\ell$. For the other side, assume that $\partial_{r - 2}\mathcal{H}$ is $\ell$-partite with the vertex partition $V(\mathcal{H}) = V_1 \cup \dots \cup V_\ell$. If $\mathcal{H}$ is not $\ell$-partite, then for any partition $V(\mathcal{H}) = W_1 \cup \dots \cup W_\ell$, there is a edge $e\in \mathcal{H}$ and $i\in [\ell]$ such that $|e\cap W_i|\ge 2$. Especially, the statement  holds for $V_1,\dots,V_\ell$, which implies $\partial_{r - 2}\mathcal{H}[V_i]$ is not empty for some $i\in [\ell]$, a contradiction.
\end{proof}

The following lemma gives a relationship between  $\mathcal{K}_{\ell+1}^{r}$-saturated graphs and its  $(r-2)$-th shadows.
\begin{lemma}\label{Maximal Kl if and only if maximal Kr}
Let $\ell\ge r\ge 3$ be  integers and $\mathcal{H}$ be an $r$-graph. Then $\mathcal{H}$ is $\mathcal{K}_{\ell+1}^{r}$-saturated if and only if each copy of $K_r$ in $\partial_{r-2} \mathcal{H}$ forms an edge in $\mathcal{H}$, and  $\partial_{r-2} \mathcal{H}$ is $K_r$-maximal, $K_{\ell + 1}$-free.
\end{lemma}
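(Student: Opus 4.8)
The plan is to prove both directions by unpacking the definitions of $\mathcal{K}_{\ell+1}^r$-saturation and of $K_r$-maximal, $K_{\ell+1}$-free, using Observations~\ref{ob1} and~\ref{l-partite} and the correspondence between edges of $\mathcal{H}$ and copies of $K_r$ in $\partial_{r-2}\mathcal{H}$. Throughout, write $G = \partial_{r-2}\mathcal{H}$, and note that adding an edge $E$ to $\mathcal{H}$ corresponds, on the shadow side, to completing one more copy of $K_r$ on the $r$-set $E$ (it may also create new copies of $K_r$ if the $\binom{r}{2}$ pairs in $E$ were not all already present in $G$, but whenever all those pairs are present, adding $E$ does not change $G$ at all). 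This subtlety---whether adding an $r$-edge changes the shadow---is the crux, and it is exactly why the statement contains the clause ``each copy of $K_r$ in $\partial_{r-2}\mathcal{H}$ forms an edge in $\mathcal{H}$.''

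For the forward direction, suppose $\mathcal{H}$ is $\mathcal{K}_{\ell+1}^r$-saturated. First, $\mathcal{H}$ is $\mathcal{K}_{\ell+1}^r$-free, so by Observation~\ref{ob1}(a), $G$ is $K_{\ell+1}$-free. Next I claim every copy of $K_r$ in $G$ is already an edge of $\mathcal{H}$: if some $r$-set $E$ spans a $K_r$ in $G$ but $E \notin \mathcal{H}$, then $\mathcal{H} \cup \{E\}$ has the same shadow $G$, which is $K_{\ell+1}$-free, so by Observation~\ref{ob1}(a) again $\mathcal{H} \cup \{E\}$ is $\mathcal{K}_{\ell+1}^r$-free---contradicting saturation. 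Finally I must show $G$ is $K_r$-maximal, $K_{\ell+1}$-free: take any $E' \subseteq E(\overline{G})$ with $N(K_r, G+E') > N(K_r, G)$; I want $G+E'$ to contain a $K_{\ell+1}$. Since $N(K_r, G+E') > N(K_r, G)$, there is an $r$-set $E$ that is a $K_r$ in $G+E'$ but not in $G$; this $E$ is not an edge of $\mathcal{H}$ (any edge of $\mathcal{H}$ is already a $K_r$ in $G$), so by saturation $\mathcal{H} \cup \{E\}$ contains a member $F$ of $\mathcal{K}_{\ell+1}^r$. Then $\partial_{r-2}(\mathcal{H}\cup\{E\})$ contains a $K_{\ell+1}$ (Observation~\ref{ob1}(a) applied to $F$); but $\partial_{r-2}(\mathcal{H}\cup\{E\}) = G \cup \binom{E}{2} \subseteq G + E'$ since all pairs of $E$ lie in $G+E'$. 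Hence $G+E'$ contains $K_{\ell+1}$, as needed.

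For the converse, suppose each $K_r$ of $G$ is an edge of $\mathcal{H}$ and $G$ is $K_r$-maximal, $K_{\ell+1}$-free. Then $G$ is $K_{\ell+1}$-free, so $\mathcal{H}$ is $\mathcal{K}_{\ell+1}^r$-free by Observation~\ref{ob1}(a). Now add any $E \notin \mathcal{H}$; I must find a member of $\mathcal{K}_{\ell+1}^r$ in $\mathcal{H}\cup\{E\}$. Set $E' = \binom{E}{2} \setminus E(G)$, the pairs of $E$ not already in $G$; this is nonempty, since if all pairs of $E$ were in $G$ then $E$ would span a $K_r$ in $G$ and hence $E \in \mathcal{H}$ by hypothesis, contradiction. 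In $G + E'$ the set $E$ now spans a $K_r$ that was not present in $G$, so $N(K_r, G+E') > N(K_r, G)$; by $K_r$-maximality, $G+E'$ contains a copy of $K_{\ell+1}$, say on an $(\ell+1)$-set $S$. Every pair in $S$ is an edge of $G + E' = \partial_{r-2}\mathcal{H} \cup \binom{E}{2}$, so every pair in $S$ is covered either by an edge of $\mathcal{H}$ (if the pair lies in a $K_r$ of $G$, hence in an edge of $\mathcal{H}$ by hypothesis) or by $E$ itself. Collecting one such covering edge per pair yields a subfamily $F \subseteq \mathcal{H} \cup \{E\}$ with at most $\binom{\ell+1}{2}$ edges in which every pair of $S$ is covered; that is, $F \in \mathcal{K}_{\ell+1}^r$, so $\mathcal{H}\cup\{E\}$ contains a member of $\mathcal{K}_{\ell+1}^r$, proving $\mathcal{H}$ is $\mathcal{K}_{\ell+1}^r$-saturated.

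The only genuinely delicate point is bookkeeping in the converse: one must be careful that the $K_{\ell+1}$ produced on $S$ uses only pairs available in $G+E'$ and that each such pair is genuinely covered by an edge of $\mathcal{H}\cup\{E\}$ (using the ``every $K_r$ is an edge'' hypothesis for pairs coming from $G$, and $E$ for the rest), so that the resulting $F$ lies in the family $\mathcal{K}_{\ell+1}^r$ with the correct edge bound. Once this is set up, everything else is a direct translation through Observation~\ref{ob1}. I do not anticipate needing Observation~\ref{l-partite} here; it is presumably used elsewhere.
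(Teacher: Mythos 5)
Your proof is correct and takes essentially the same route as the paper: both directions are handled by translating saturation through Observation~\ref{ob1} and the correspondence between edges of $\mathcal{H}$ and copies of $K_r$ in $\partial_{r-2}\mathcal{H}$. The only differences are cosmetic --- you argue directly, using the containment $\partial_{r-2}(\mathcal{H}\cup\{E\})=G\cup\binom{E}{2}\subseteq G+E'$ and exhibiting the member of $\mathcal{K}_{\ell+1}^{r}$ explicitly, whereas the paper argues by contradiction and chooses the added edge set minimal so that the new shadow equals $G+E'$ exactly.
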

\begin{proof}
On the one hand, suppose that $\mathcal{H}$ is a $\mathcal{K}_{\ell+1}^{r}$-saturated $r$-graph. Then  $\partial_{r-2} \mathcal{H}$ is $K_{\ell+1}$-free by Observation \ref{ob1}. Now we show  $\partial_{r-2} \mathcal{H}$ is  $K_r$-maximal. Otherwise, there is a subset $E\subseteq E(\overline{\partial_{r-2} \mathcal{H}})$ such that $\partial_{r-2} \mathcal{H}+E$ is also $K_{\ell+1}$-free but $N(K_r,\partial_{r-2} \mathcal{H}+E)> N(K_r,\partial_{r-2} \mathcal{H})$. We may choose $E$ such that $|E|$ is minimal and assume that $U=\{u_1,u_2,\dots,u_r\}$ is the subset such that $G=(\partial_{r-2} \mathcal{H}+E)[U]$ is a copy of $K_r$. Then $E\subseteq E(G)$, which means $\{u_1,u_2,\dots,u_r\}\notin \mathcal{H}$. Let $\mathcal{H}'$ = $\mathcal{H} \cup \left\{ u_1,u_2,\dots,u_r \right\}$. Then $\partial_{r-2} \mathcal{H}'=\partial_{r-2} \mathcal{H}+E$ and so $\partial_{r-2} \mathcal{H}'$ is $K_{\ell+1}$-free. However, since $\mathcal{H}$ is $\mathcal{K}_{\ell+1}^{r}$-saturated, $\mathcal{H}'$ contains a copy of a member of $\mathcal{K}_{\ell + 1}^{r}$, which implies that $\partial_{r-2} \mathcal{H}'$ contains a copy of $K_{\ell+1}$ by Observation \ref{ob1}, a contradiction.

Suppose that there exists an $r$-set $\{u_1,u_2,\dots,u_r\}$ which forms a $K_r$ in $\partial_{r-2} \mathcal{H}$ but not an edge in $\mathcal{H}$.  Let $\mathcal{H}'$ = $\mathcal{H} \cup \left\{ u_1,u_2,\dots,u_r \right\}$.  Then $\partial_{r-2} \mathcal{H} = \partial_{r-2} \mathcal{H}'$ and  so $\partial_{r-2} \mathcal{H}'$ is $K_{\ell+1}$-free. However, it follows from $\mathcal{H}$ is $\mathcal{K}_{\ell+1}^{r}$-saturated that $\mathcal{H}'$ contains a copy of a member of $\mathcal{K}_{\ell + 1}^{r}$-free, and then $\partial_{r-2} \mathcal{H}'$ contains a copy of $K_{\ell+1}$ by Observation \ref{ob1}, a contradiction.

On the other hand, suppose each $K_r$ in $\partial_{r-2} \mathcal{H}$ forms an edge in $\mathcal{H}$, and  $\partial_{r-2} \mathcal{H}$ is $K_r$-maximal, $K_{\ell + 1}$-free. By Observation \ref{ob1}, $\mathcal{H}$ is $\mathcal{K}_{\ell+1}^{r}$-free. If there exists $r$-set $\{u_1,\dots,u_r\}$ not in $\mathcal{H}$  such that $\mathcal{H}'=\mathcal{H}\cup \{u_1,\dots,u_r\}$ is also $\mathcal{K}_{\ell+1}^{r}$-free.  Again, by Observation \ref{ob1}, $\partial_{r-2} \mathcal{H}'$ is $K_{\ell+1}$-free. Since every $K_r$ in $\partial_{r-2} \mathcal{H}$ is a edge in $\mathcal{H}$, the $r$-set $\{u_1,\dots,u_r\}$ fails to form a $K_r$ in $\partial_{r-2} \mathcal{H}$. Thus, $N(K_r, \partial_{r-2} \mathcal{H}')>N(K_r, \partial_{r-2} \mathcal{H})$. It follows from $\partial_{r-2} \mathcal{H}$ is $K_r$-maximal that $\partial_{r-2} \mathcal{H}$ contains a copy of $K_{\ell+1}$, and so does $\partial_{r-2} \mathcal{H}'$, a contradiction.
\end{proof}

We end this section by a result of Fisher and Ryan, which gives a relationship between the number of large cliques and   small ones in a $K_{\ell+1}$-free graph.
\begin{theorem}[Fisher and Ryan \cite{fisher1992bounds}]\label{th1}
Let $G$ be a  $K_{\ell + 1}$-free graph on $n$ vertices. For every $i \in [\ell]$, let $k_i$ denote the number of copies of $K_i$ in $G$. Then
\[
\left(\frac{k_{\ell}}{\binom{\ell}{\ell}} \right)^{\frac{1}{\ell}} \leq \left(\frac{k_{\ell - 1}}{\binom{\ell}{\ell - 1}}\right)^{\frac{1}{\ell - 1}} \leq\cdots\leq \left(\frac{k_{2}}{\binom{\ell}{2}} \right)^{\frac{1}{2}} \leq \left(\frac{k_{1}}{\binom{\ell}{1}} \right)^{\frac{1}{1}}.
\]
\end{theorem}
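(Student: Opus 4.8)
The plan is to prove the chain one link at a time: it suffices to establish
\[
\left(\frac{k_i}{\binom{\ell}{i}}\right)^{1/i}\le\left(\frac{k_{i-1}}{\binom{\ell}{i-1}}\right)^{1/(i-1)}\qquad(2\le i\le\ell),
\]
as composing these $\ell-1$ inequalities yields the statement. Write $\omega=\omega(G)\le\ell$. If $i>\omega$ then $k_i=0$ and the inequality is trivial, so we may fix $i\le\omega$, whence $k_1,\dots,k_i\ge1$. It is in fact enough to prove the inequality with $\binom{\omega}{i}$ in place of $\binom{\ell}{i}$: since $\binom{\omega}{i}/\binom{\ell}{i}=\prod_{j=0}^{i-1}\tfrac{\omega-j}{\ell-j}$ and the geometric mean of the non-increasing sequence $\bigl(\tfrac{\omega-j}{\ell-j}\bigr)_{j\ge0}$ over its first $i$ terms is non-increasing in $i$, the $\binom{\ell}{i}$-version follows by multiplying this non-increasing sequence by the non-increasing sequence $(k_i/\binom{\omega}{i})^{1/i}$.

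The key point is that on complete multipartite graphs the theorem is exactly Maclaurin's inequality. If $G=K_{a_1,\dots,a_m}$ then a $K_i$ is obtained by choosing $i$ of the parts and one vertex from each, so $k_i=e_i(a_1,\dots,a_m)$, the $i$-th elementary symmetric polynomial in the (non-negative) part sizes; padding the list with zeros to length $\ell$, Maclaurin's inequality asserts precisely that $(k_i/\binom{\ell}{i})^{1/i}$ is non-increasing in $i$. So the theorem holds for complete multipartite $G$, and the whole task is to reduce the general case to this one.

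For the reduction I would use Zykov-type symmetrization. The basic move --- choosing non-adjacent vertices $u,v$ and replacing $N(v)$ by $N(u)$ --- preserves $\mathcal{K}$-type ($K_{\ell+1}$-)freeness and, for \emph{every} $j$ simultaneously, changes the number of $K_j$'s by exactly $k_{j-1}(G[N(u)])-k_{j-1}(G[N(v)])$. Repeatedly applying moves that do not decrease $k_i$ terminates at a complete multipartite graph (this is Zykov's classical argument), and then Maclaurin finishes. The obstacle --- and what I expect to be the heart of the proof --- is that such a move may also change $k_{i-1}$ in an uncontrolled direction, so one cannot symmetrize toward a complete multipartite graph while keeping $k_{i-1}$ fixed. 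The plan to get around this is to tie the argument to the convex function $F_\ell(c):=\binom{\ell}{i}\bigl(c/\binom{\ell}{i-1}\bigr)^{i/(i-1)}$: fixing $v(G)=n$ and the value of $k_{i-1}$, show (choosing each symmetrization move so as to respect that constraint, using the explicit change formula above) that the maximum of $k_i$ over $K_{\ell+1}$-free graphs on $n$ vertices with the given value of $k_{i-1}$ is attained at a complete multipartite graph; Maclaurin applied to that graph gives $k_i\le F_\ell(k_{i-1})$, i.e.\ the desired link. Equivalently, one must show that the upper boundary of the region of achievable pairs $(k_{i-1},k_i)$ is realised by complete multipartite graphs, and this compatibility of the symmetrization with the $k_{i-1}$-constraint is the delicate step.

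An alternative, possibly cleaner to write but with its own difficulty, is a direct induction on $\ell$. The base case $\ell=2$ is Mantel's theorem ($4k_2\le k_1^2$); for the inductive step one uses the identity $i\,k_i=\sum_{v}k_{i-1}(G[N(v)])$, where each link $G[N(v)]$ is $K_\ell$-free on $d(v)\le n$ vertices, together with the inductive bound relating $k_{i-1}(G[N(v)])$ to $k_{i-2}(G[N(v)])$. The obstacle here is the passage from the local (per-vertex) estimates to a global one: one needs an \emph{upper} bound on a sum $\sum_v x_v^{\,p}$ with exponent $p>1$, whereas Jensen's inequality only supplies a lower bound, so the aggregation must be done carefully rather than term-by-term, exploiting the extra inequality $\bigl(k_{i-2}(G[N(v)])/\binom{\ell-1}{i-2}\bigr)^{1/(i-2)}\le d(v)/(\ell-1)$ coming from the inductive hypothesis applied to $G[N(v)]$. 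A useful auxiliary fact in either approach is that if $(a_i)_{i\ge1}$ and $(b_i)_{i\ge1}$ both have non-increasing $i$-th roots then so does $(a_i+b_i)_{i\ge1}$, because $t\mapsto t^{i/(i-1)}$ is superadditive on $[0,\infty)$; this lets one split a clique count into contributions and recombine them.
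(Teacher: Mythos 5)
There is nothing in the paper to compare against: Theorem~\ref{th1} is imported from Fisher and Ryan \cite{fisher1992bounds} and the paper gives no proof, so your proposal has to stand on its own. The parts you do carry out are fine: reducing the chain to consecutive links, reducing $\binom{\ell}{i}$ to $\binom{\omega}{i}$ via the product of two non-increasing sequences, and the observation that for complete multipartite graphs $k_i=e_i(a_1,\dots,a_m)$ so the statement there is exactly Maclaurin's inequality. But the entire content of the theorem is the passage from an arbitrary $K_{\ell+1}$-free graph to that multipartite bound, and in both of your routes this is exactly the step you flag as ``delicate'' and never perform; as written the proposal is a plan, not a proof.

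Moreover, the reduction you propose in the symmetrization route is false as literally stated. You want ``the maximum of $k_i$ over $K_{\ell+1}$-free graphs on $n$ vertices with the given value of $k_{i-1}$ is attained at a complete multipartite graph,'' but for most values of $k_{i-1}$ no complete multipartite graph on $n$ vertices attains that value at all: take $i=3$, $\ell=3$, $n\ge 5$ and $k_2=3$; the maximizer is a triangle plus isolated vertices ($k_3=1$), which is not complete multipartite, and no complete multipartite graph on $n\ge 5$ vertices has exactly $3$ edges. So the argument must be reformulated (e.g.\ comparison with a multipartite graph having $k_{i-1}$ at most the given value, or a weighted/Lagrangian relaxation in which part sizes are real), and you would still have to show that Zykov moves can be organized to respect such a constraint --- none of which is routine; this is precisely where Fisher and Ryan's actual work lies. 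The inductive route has the same status: from $i\,k_i=\sum_v k_{i-1}(G[N(v)])$ and the inductive per-link bound one gets $\sum_v x_v^{\,p}$ with $p=(i-1)/(i-2)>1$, and the tools you name (Jensen in the wrong direction, a bound on each $k_{i-2}(G[N(v)])$ via $d(v)$, superadditivity of $t^p$) do not by themselves produce the stated constant $\binom{\ell}{i}\bigl(k_{i-1}/\binom{\ell}{i-1}\bigr)^{i/(i-1)}$. Until one of these two aggregation/reduction steps is actually proved, the key inequality $k_i\le\binom{\ell}{i}\bigl(k_{i-1}/\binom{\ell}{i-1}\bigr)^{i/(i-1)}$ --- i.e.\ the theorem itself --- remains unestablished.
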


\section{Large induced complete $\ell$-partite subgraph}\label{SEC:3}
\subsection{Proof of Theorem \ref{th2}}
In this subsection, we prove the following quantitative version of Theorem \ref{th2} using an idea given by Popielarz, Sahasrabudhe and Snyder \cite{popielarz2018stability}.

\begin{theorem}\label{final result}
Let $\ell$, $r$ and $n$ be integers with ${\ell} \geq r \geq 3$ and  $n \geq 10^{20}\ell^{20}$,  and let $\varepsilon$ be a real number with $\varepsilon \leq n^{1/\ell-1}$. Suppose that $\mathcal{H}$ is  a $\mathcal{K}_{\ell+1}^{r}$-saturated $r$-graph on $n$ vertices with
\[
|\mathcal{H}|\ge t_r(n, \ell) - \frac{\binom{\ell}{r}r\varepsilon}{4(\ell-1)\ell^{r-1}} n^{r}.
\]
Then there exists a constant $C_{\ell}>0$ such that $\mathcal{H}$ contains a complete $\ell$-partite $r$-graph on $\left(1 - C_{\ell}\varepsilon n^{1- 1/\ell} - 10^6 \ell^3(3r-4)^3 \varepsilon \right)n$ vertices.
\end{theorem}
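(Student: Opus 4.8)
The plan is to work entirely in the shadow graph $G:=\partial_{r-2}\mathcal{H}$ and to run the Popielarz--Sahasrabudhe--Snyder strategy there. By Lemma~\ref{Maximal Kl if and only if maximal Kr}, $G$ is $K_r$-maximal and $K_{\ell+1}$-free and every copy of $K_r$ in $G$ is an edge of $\mathcal{H}$, so $N(K_r,G)=|\mathcal{H}|$; and by Observation~\ref{l-partite}, if $U\subseteq V(G)$ induces a complete $\ell$-partite graph in $G$ with classes $U_1,\dots,U_\ell$, then every $r$-set of $U$ meeting $r$ distinct $U_i$'s spans a $K_r$ of $G$ and hence is an edge of $\mathcal{H}$, so $\mathcal{H}[U]$ is the complete $\ell$-partite $r$-graph on $U$. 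Thus it suffices to find $U\subseteq V(G)$ with $|U|\ge\bigl(1-C_\ell\varepsilon n^{1-1/\ell}-10^6\ell^3(3r-4)^3\varepsilon\bigr)n$ such that $G[U]$ is complete $\ell$-partite.

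First I would turn the hypothesis on $N(K_r,G)$ into an edge bound. Since $G$ is $K_{\ell+1}$-free, Theorem~\ref{th1} gives $\bigl(N(K_r,G)/\binom{\ell}{r}\bigr)^{1/r}\le\bigl(e(G)/\binom{\ell}{2}\bigr)^{1/2}$; substituting the assumed lower bound on $N(K_r,G)=|\mathcal{H}|$, the elementary estimate $t_r(n,\ell)=\binom{\ell}{r}(n/\ell)^r+O_{\ell,r}(n^{r-1})$, and $(1-x)^{2/r}\ge 1-\tfrac{2}{r}x$, a short computation yields $e(G)\ge t_2(n,\ell)-\tfrac14\varepsilon n^2-O_{\ell,r}(n)$; the constant in the hypothesis is tuned precisely so that the leading loss is $\tfrac14\varepsilon n^2$. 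If $G$ is already complete $\ell$-partite we are done with $U=V(G)$ (were $G$ actually $K_{\ell+1}$-saturated, Brandt's Lemma~\ref{Lemma:maximal} would apply, but I will not need it). Otherwise $G$ is an almost-extremal $K_{\ell+1}$-free graph, so a standard stability argument produces a partition $V(G)=V_1\cup\dots\cup V_\ell$ with only $O_{\ell,r}(\varepsilon n^2+n)$ edges inside the classes, hence $O_{\ell,r}(\varepsilon n^2+n)$ non-edges between distinct classes.

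Next I would delete two small sets of vertices. Let $Z$ be the set of vertices with more than $cn$ non-neighbours outside their own class, for a suitable small $c=c_{\ell,r}>0$; by the non-edge count just mentioned, $|Z|\le 10^6\ell^3(3r-4)^3\varepsilon n$ (the factor $(3r-4)$ being forced by the choices below and by transferring between $\mathcal{H}$ and $G$). Writing $V':=V(G)\setminus Z$ and $V_i':=V_i\cap V'$, a supersaturation argument shows that for $n$ large no edge of $G$ lies inside a class with both ends in $V'$: such an edge $xy$ would make $G[N_G(x)\cap N_G(y)]$ a $K_{\ell-1}$-free graph containing a near-complete $(\ell-1)$-partite graph on $\Omega_\ell(n)$ vertices, forcing $\Omega_\ell(n^2)$ between-class non-edges of $G$ and contradicting the budget once $\varepsilon$ is small. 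Hence each $G[V_i']$ is empty, so $G[V']$ is $\ell$-partite, and it remains only to remove a set $R'$ covering the between-class non-edges of $G[V']$. Here Lemma~\ref{lem} is used with $t=1$: for a non-edge $uv$ with $u\in V_i'$, $v\in V_j'$, goodness of $u,v$ forces a $K_{r-2}$ (indeed a $K_{\ell-2}$) in $N_G(u)\cap N_G(v)$, so $N(K_r,G+uv)>N(K_r,G)$, so $K_r$-maximality forces a $K_{\ell+1}$ in $G+uv$, i.e.\ a clique $K_{\ell-1}\subseteq N_G(u)\cap N_G(v)$; since in $V'$ this clique can only occupy the $\ell-2$ classes other than $V_i,V_j$, it must contain some $z\in Z$, and then adding $uv$ completes a $K_\ell$ inside the $K_\ell$-free graph $G[N_G(z)]$. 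Thus each family $E_{ij}$ of $V_i'$--$V_j'$ non-edges is $\ell$-saturating across the $s=|Z|$ subgraphs $\{G[N_G(z)]:z\in Z\}$, and Lemma~\ref{lem} (applied to a suitable $(\ell+1)$-partite host built from $G[V']$) returns $R'_{ij}\subseteq V_i'\cup V_j'$ covering $E_{ij}$ with $|I_{\widehat{G}}(R'_{ij})|\ge c'_{\ell,1}|Z|^{-1/(\ell-1)}|R'_{ij}|^{\ell/(\ell-1)}$; since the left side is at most the total number of between-class non-edges, which is $O_{\ell,r}(\varepsilon n^2+n)$, solving for $|R'_{ij}|$ gives $|R'_{ij}|=O_\ell(\varepsilon n^{2-1/\ell})$, and $R':=\bigcup_{i<j}R'_{ij}$ has $|R'|\le C_\ell\varepsilon n^{2-1/\ell}$. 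Then $U:=V'\setminus R'$ works: $G[U]$ is complete $\ell$-partite and $|U|\ge n-|Z|-|R'|$, which is the required bound.

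The bookkeeping aside, the two real difficulties are (i) obtaining $|Z|=O_\ell(\varepsilon n)$ --- a bound linear, not merely root-order, in $\varepsilon$ --- which is exactly where the hypothesis that $G$ nearly maximises $N(K_r,\cdot)$ rather than just $e(\cdot)$ must be exploited, and which propagates to the linear-in-$\varepsilon$ term in the conclusion; and (ii) packaging $G[V']$ together with the link-subgraphs $G[N_G(z)]$ into the $(\ell+1)$-partite host required by Lemma~\ref{lem}, where one must check that the $K_\ell$ witnessing saturation of $uv$ survives the deletion of within-class edges. I expect (i) to be the main obstacle.
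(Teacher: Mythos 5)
Your overall strategy is the same as the paper's (pass to $G=\partial_{r-2}\mathcal{H}$ via Lemma~\ref{Maximal Kl if and only if maximal Kr}, convert the $K_r$-count hypothesis into $e(G)\ge t_2(n,\ell)-\Theta(\varepsilon) n^2$ via Theorem~\ref{th1}, delete $O_{\ell,r}(\varepsilon n)$ bad vertices, then cover the remaining cross non-edges using Lemma~\ref{lem}), but the covering step as you set it up does not go through. You try to use only the $t=1$ case of Lemma~\ref{lem}: for a cross non-edge $uv$, $K_r$-maximality gives a $K_{\ell-1}\subseteq N_G(u)\cap N_G(v)$, and you argue it contains at least one vertex $z\in Z$, so $uv$ is $\ell$-saturating in $G[N_G(z)]$. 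That last statement is true for the \emph{unrestricted} link graph, but Lemma~\ref{lem} requires the host graphs $H_1,\dots,H_s$ to be subgraphs of a $K_{\ell+1}$-free, $(\ell+1)$-partite ambient graph whose two distinguished classes contain the non-edges; $G[N_G(z)]$ may contain further vertices of $Z$ and edges inside $Z$, so it need not sit inside any such partite host, and if you restrict to $G[N_G(z)\cap V']$ the witnessing clique can die precisely when the $K_{\ell-1}$ contains two or more vertices of $Z$ (then only a $K_{\ell-j}$ with $j\ge 1$ survives, so $uv$ is not $\ell$-saturating there). You flag this as your difficulty (ii) but offer no resolution, and there is no cheap one: the paper resolves it by stratifying the cross non-edges according to the exact number $t\in[\ell-1]$ of deleted vertices in the created $K_{\ell+1}$, and applying Lemma~\ref{lem} once for each $t$ with the hosts $G'[N_{G^\ast}(K)\cap V(G')]$ indexed by the $t$-cliques $K$ of the deleted set; these are genuinely $K_{\ell+1-t}$-free subgraphs of the $\ell$-partite graph $G'$, the number of hosts is at most $|T|^t=O\bigl((\varepsilon n)^t\bigr)$, and the resulting bounds $|\mathcal{Z}_{ij}(t)|\le C_{\ell,t}\,\varepsilon^{\ell/(\ell+1-t)}n^{(\ell-1)/(\ell+1-t)}\,n$ still sum to $C_\ell\varepsilon n^{1-1/\ell}n$ because $\varepsilon\le n^{1/\ell-1}$ makes $t=1$ the dominant case.

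A second, smaller gap is your step producing the partition and the set $Z$ with $|Z|=O_\ell(\varepsilon n)$. You invoke ``a standard stability argument'' to get $O_{\ell,r}(\varepsilon n^2+n)$ misplaced edges, but the classical Erd\H{o}s--Simonovits stability only gives a root-order (not linear) dependence on $\varepsilon$, and you yourself list the linear bound as the main unresolved obstacle --- moreover you misdiagnose its source: no extra use of $K_r$-maximality is needed here. The paper gets linearity directly from the edge bound by iteratively deleting vertices of degree below $(\frac{\ell-1}{\ell}-\eta)$ times the current order and invoking Theorem~\ref{AES} (Andr\'asfai--Erd\H{o}s--S\'os) on what remains; a simple edge count then bounds the number of deleted vertices by $10^6\ell^3(3r-4)^3\varepsilon n$, and it also makes your separate supersaturation step (killing within-class edges among the kept vertices) unnecessary, since the remaining graph is $\ell$-partite outright. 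With these two repairs --- the AES-based vertex deletion and the $t$-stratified application of Lemma~\ref{lem} --- your outline becomes the paper's proof.
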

\begin{proof}

Recall that $t_r(n, \ell)$  denotes the number of edges in the Tur\'{a}n graph $T_r(n, \ell)$. First, we give a lower bound on $t_r(n, \ell)$. Let $n = \ell q+s$, where $0 \leq s \leq \ell - 1$. Then
\begin{align}\label{number-edge-t-r}
\binom{\ell}{r}\left(\frac{n}{\ell}\right)^r&= \binom{\ell}{r}\left(q + \frac{s}{\ell}\right)^r   \notag \\
&=\binom{\ell}{r}q^r + s\binom{\ell-1}{r-1}q^{r-1}+\sum_{i=2}^r\binom{\ell}{r}\binom{r}{i}\left(\frac{s}{\ell}\right)^iq^{r-i}
\end{align}
Note that the Tur\'{a}n graph $T_r(n, \ell)$ can be considered as the Tur\'{a}n graph $T_r(\ell q, \ell)$ and $s$ additional vertices such that the link of each vertex is the Tur\'{a}n graph $T_{r-1}((\ell-1) q+s-1, \ell-1)$. So,
\begin{align}\label{edge-t-r}
t_r(n, \ell) \ge t_r(\ell q, \ell) + \sum_{i = 1}^{r}\binom{s}{i}\binom{\ell-i}{r-i}q^{r-i}
\ge \binom{\ell}{r}q^r + s\binom{\ell-1}{r-1}q^{r-1}.
\end{align}
Combining \eqref{number-edge-t-r} and \eqref{edge-t-r}, we have
\begin{align}\label{lower-bound-turan-graph}
t_r(n, \ell)-\binom{\ell}{r}\left(\frac{n}{\ell}\right)^r \geq  - \sum_{i=2}^r\binom{\ell}{r}\binom{r}{i}\left(\frac{s}{\ell}\right)^iq^{r-i} \geq -\ell^2r^{r+1}n^{r-2}.
\end{align}

Suppose that $\mathcal{H}$ is a  $\mathcal{K}_{\ell+1}^{r}$-saturated $r$-graph on $n$ vertices with
\[
\left |\mathcal{H}\right | \geq t_{r}(n, \ell) - \frac{\binom{\ell}{r}r\varepsilon}{2(\ell-1)\ell^{r-1}} n^{r}.
 \]
We may assume that
\begin{align}\label{bound-varepsilon}
\varepsilon\ge \frac{2r^r(\ell-1)\ell^{r+1}}{\binom{\ell}{r}}n^{-2}.
\end{align}
Then by Lemma $\ref{Maximal Kl if and only if maximal Kr}$, $G^\ast =\partial_{r-2}\mathcal{H}$ is $K_r$-maximal, $K_{\ell+1}$-free. By Observation \ref{ob1}(b), and inequalities \eqref{lower-bound-turan-graph} and \eqref{bound-varepsilon}, we obtain
\begin{align}\label{number-edge-r-2-H}
N(K_r, G^\ast)&\ge t_r(n,\ell) - \frac{\binom{\ell}{r}r\varepsilon}{2(\ell-1)\ell^{r-1}} n^{r} \notag  \\
&\ge \binom{\ell}{r}\left(\frac{n}{\ell}\right)^r - \frac{\binom{\ell}{r}r\varepsilon}{(\ell-1)\ell^{r-1}}  n^{r}.
\end{align}

It suffices to find a complete $\ell$-partite subgraph of $G^\ast$ on $(1-o(1))n$ vertices by Observation \ref{l-partite}. Now we bound the number of edges of $G^\ast$. By Theorem $\ref{th1}$ and \eqref{number-edge-r-2-H}, we obtain
\begin{align}\label{lower-bound-r-2-H}
e(G^\ast) \geq & \left(\frac{N(K_r, G^\ast)}{\binom{\ell}{r}}\right)^{2/r} \binom{\ell}{2} \notag
\\ \geq &  \left(\frac{\binom{\ell}{r}\left(\frac{n}{\ell}\right)^r - \frac{\binom{\ell}{r}r}{(\ell-1)\ell^{r-1}} \varepsilon n^{r}}{\binom{\ell}{r}}\right)^{2/r} \binom{\ell}{2}    \notag
\\ \geq & \binom{\ell}{2}\left(\frac{n}{\ell}\right)^2\left(1 - \frac{\ell r\varepsilon}{(\ell-1)}\right)^{2/r}  \notag
\\ \geq & \binom{\ell}{2}\left(\frac{n}{\ell}\right)^2\left(1- \frac{2\ell \varepsilon}{(\ell-1)}\right)   \notag
\\ \geq & t_2(n, \ell) - \varepsilon n^{2},
\end{align}
where the second inequality from the bottom  follows from Bernoulli inequality $(1 + x)^{a} \geq 1 + ax$ for $x \geq -1$ and $a >0$.

In the following, let
\[
\eta = \frac{1}{10^5\ell^3(3r-4)^3}.
\]
Then
\begin{align}\label{l-1/l-delta}
\frac{\ell-1}{\ell}-\eta\ge \frac{3\ell-4}{3\ell-1}.
\end{align}

For a graph $G$ and $v\in V(G)$, $v$ is called a \emph{small vertex} of $G$ if $d(v)<(\frac{\ell-1}{\ell}-\eta)v(G)$. We get an $\ell$-partite subgraph $G'$ of $G^\ast$ as follows:
If $G^\ast$ has no small vertex, then $G'=G^\ast$ by Theorem \ref{AES} and \eqref{l-1/l-delta}. Otherwise, we  delete a small vertex of $G^\ast$ and get a new graph. For this new graph, we do the same operation until the remaining graph $G'$  has no small vertices. Then $G'$ is $\ell$-partite by Theorem \ref{AES}. Let $T=V(G^\ast)-V(G')$ be the deleted vertices. For simplicity, let $t=|T|$ and $n'=n-t$. We bound $|T|$ as follows:

\begin{claim}\label{T}
$|T| \leq 10^6 \ell^3(3r-4)^3 \varepsilon n$.
\end{claim}
\begin{proof}[Proof of Claim \ref{T}]
It follows from $G'$ is $K_{\ell + 1}$-free that $e(G') \leq t_2(n', \ell)$.
If $|T| > 10^6 \ell^3(3r-4)^3 \varepsilon n$, then
\begin{align}
e(G^\ast)&<\sum_{i=n'+1}^n\left(\frac{\ell - 1}{\ell}-\eta\right)i + t_2(n', \ell) \notag\\
&= \sum_{i=n'+1}^n\left(\frac{\ell - 1}{\ell}i-1\right) - \sum_{i=n'+1}^n(\eta i-1) + t_2(n', \ell)  \notag \\
& < t_2(n, \ell) - \sum_{i=n'+1}^n(\eta i-1)  \notag \\
&\leq t_2(n, \ell)-\frac{\eta t}{2}\left(2n+1-t\right)  \notag \\
& < t_2(n, \ell) - \varepsilon n^{2}, \notag
\end{align}
a contradiction. Note that the last inequality holds as $\frac{\eta t}{2}\left(2n+1-t\right)$ is increasing for $t$ on the interval $\left(10^6 \ell^3(3r-4)^3 \varepsilon n, n+1/2\right)$.
\end{proof}

Suppose that $G'$ is an  $\ell$-partite graph with the vertex partition $V_1 \cup \dots \cup V_\ell$ such that
\begin{align}\label{min-deg-G'}
\delta(G') \geq \left(\frac{\ell-1}{\ell}-\eta\right)n'.
\end{align}
Then the following claim is obvious.

\begin{claim}\label{claim Vi}
The following statements hold.
\begin{itemize}
\item [(a)]$\left||V_i|- \frac{1}{\ell}n'\right| \leq \eta^{1/2}n'$ for every $i \in [\ell]$.

\item [(b)]If $i\in[\ell]$ and $u\in V(G') \setminus V_i$, then $|V_i \setminus N_{G'}(u)|\leq 2\eta^{1/2}n'$.
\end{itemize}
\end{claim}
\begin{proof}[Proof of Claim \ref{claim Vi}]
First, we prove (a). On the one hand, counting the number of edges in $G'$ yields that
\begin{align*}\label{lower bound of edges of G'}
\sum_{1\leq i < j \leq \ell}|V_i| |V_j|\ge e(G') \geq \frac{\delta n'}{2} \overset{\eqref{min-deg-G'}}\ge \left(\frac{\ell-1}{\ell}-\eta\right)\frac{n'^2}{2}.
\end{align*}
On the other hand, we know
\begin{align*}
\sum_{1\leq i < j \leq \ell}|V_i| |V_j|= \frac{1}{2}\sum_{i=1}^\ell|V_i|(n'-|V_i|) = \frac{n'^2}{2} - \frac{1}{2}\sum_{i=1}^\ell |V_i|^2.
\end{align*}
Thus we can conclude that
\begin{equation}\label{upper-bound-square-V}
\sum_{i=1}^\ell |V_i|^2 \leq \frac{n'^2}{\ell} + \eta n'^2.
\end{equation}
Fix an integer $k\in [\ell]$, let  $\theta_k = |V_k|-n'/\ell$. Then
\begin{align}\label{lower-bound-square-V}
\sum_{i=1}^\ell |V_i|^2 &= \left(\frac{n'}{\ell} + \theta_k \right)^2 + \sum_{i\neq k}|V_i|^2  \notag \\
&\geq \left(\frac{n'}{\ell}+\theta_k \right)^2 + \frac{(\sum_{i \neq k}|V_i|)^2}{\ell - 1}    \notag \\
&\geq \left(\frac{n'}{\ell}+\theta_k \right)^2 + \frac{(n'(1-1/\ell)-\theta_k)^2}{\ell-1}      \notag \\
&\geq \frac{n'^2}{\ell}+\theta_k^2.
\end{align}
Combining \eqref{upper-bound-square-V} and \eqref{lower-bound-square-V}, we obtain that
$|\theta_k| \leq \eta^{1/2}n',$
and thus
\[
\left||V_k|- \frac{1}{\ell}n'\right| \leq \eta^{1/2}n'
\] for every $k \in [\ell]$.

Now we prove (b). Fix $k\in [\ell]$ and $k \neq i$. Set $V_0 = V(G') - V_k - V_i$. Combining \eqref{min-deg-G'} and (a), we have for each $u \in V_k$,
\begin{align*}
\left|N_{G'[V_i]}(u)\right| &= d_{G'}(u) - \left|N_{G'[V_0]}(u)\right|  \\
&\geq \left(\frac{\ell-1}{\ell}-\eta\right)n' - (\ell-2)\left(\frac{1}{\ell}+\eta^{1/2}\right)n'  \\
&\geq |V_i| - 2\eta^{1/2}n',
\end{align*}
which implies that $|V_i \setminus N_{G'}(u)|\leq 2\eta^{1/2}n'$.
\end{proof}

It follows from Claim \ref{claim Vi} that $G'$ is close to the $T(n,\ell)$. So the following is obvious by a greedy choice.

\begin{claim}\label{Kr-2}
For $1\leq i<j\leq \ell$, if $e=x_ix_j$ is a non-edge for $x_i \in V_i$ and $x_j \in V_j$, then $G'[N(x_i) \cap N(x_j)]$ contains at least one copy of $K_{r-2}$.
\end{claim}
\begin{proof}[Proof of Claim \ref{Kr-2}]
Without loss of generality, let $e=x_1x_2$ be a non-edge for $x_1 \in V_1$ and $x_2 \in V_2$. Clearly, we can choose a vertex $x_3\in V_3$ such that $x_3\in N(x_1) \cap N(x_2)$ by Claim \ref{claim Vi}. Suppose that $x_3,\dots, x_{p}$ have been picked for $x_i \in V_i$ with $i \in\{3, 4, \dots, p\}$. Then by Claim \ref{claim Vi}, we obtain
\begin{align}
&|N_{G'[V_{p+1}]}(x_1)\cap \dots \cap N_{G'[V_{p+1}]}(x_{p})| \notag \\
&\geq \sum_{i=1}^{p}d_{G'[V_{p+1}]}(x_i)-(p-1)|V_{p+1}|                                                    \notag \\
&\geq p \left(\left(\frac{1}{\ell}-\eta^{1/2}\right)n'-2\eta^{1/2}n'\right)-(p-1)\left(\frac{1}{\ell}+\eta^{1/2}\right)n'  \notag\\
&\ge\left(\frac{1}{\ell}+\eta^{1/2}-4p\eta^{1/2}\right) n'  \notag\\
&\geq 1.   \notag
\end{align}
Note that the last inequality holds by the choice of   $\eta$ and the fact $p\le r-1$. So we can choose a vertex $x_{p+1}$ in $V_{p+1}$ with $x_{p+1}\in \cap_{i=1}^p N(x_i)$. This completes the proof.
\end{proof}

\begin{claim}\label{T-neq-empty}
If $G'$ is not a complete $\ell$-partite  graph, then $T \neq \emptyset$.
\end{claim}
\begin{proof}[Proof of Claim \ref{T-neq-empty}]
Otherwise, $T = \emptyset$ and $G^\ast=G'$. For a non-edge $x_ix_j$ with $x_i \in V_i$ and $x_j \in V_j$ with $i\neq j$, it follows from Claim \ref{Kr-2} that $N(K_r, G^\ast+x_ix_j)>N(K_r, G^\ast)$. Recall that $G^\ast$ is $K_r$-maximal. There exists a subset $S$ with $|S|=\ell+1$ and $x_i,x_j\in S$ such that the subgraph in $G^\ast+x_ix_j$ induced by $S$ is a copy of $K_{\ell+1}$. By the Pigeonhole Principle, there exists $s\in [\ell]$ such that $|S\cap V_s|\ge 2$, which means there is an edge in $G^\ast[V_s]$, a contradiction.
\end{proof}

In the following, we assume that $T \neq \emptyset$, since otherwise, $G'$ is a desired complete $\ell$-partite  graph by Claim \ref{T-neq-empty}. Recall that $\widehat{G'}$ is the $\ell$-partite complement containing all the non-edges of $G'$ between vertex class $V_i$ and $V_j$ for $1\leq i < j\leq \ell$. Now we bound the missing edges in $G'$.

\begin{claim}\label{claim1}
There exists a constant $\lambda_{r, \ell}=\lambda(r, \ell)>0$ such that $e(\widehat{G'}) \leq \lambda_{r, \ell}\varepsilon n^{2}$.
\end{claim}
\begin{proof}[Proof of Claim \ref{claim1}]
On the one hand, we know
\begin{align}
e(G^\ast) &= e(T) + e(T,V(G')) + e(G') \notag \\
& \leq |T|\left(\frac{\ell-1}{\ell}-\eta\right)n + t_2(n-t,\ell) - e(\widehat{G'})\notag\\
& \leq |T|\left(\frac{\ell-1}{\ell}-\eta\right)n + t_2(n,\ell) - e(\widehat{G'}). \notag
\end{align}
On the other hand, we have $e(G^\ast)\ge t_2(n,\ell)-\varepsilon n^2$ by \eqref{lower-bound-r-2-H} and $|T| \leq 10^6 \ell^3(3r-4)^3 \varepsilon n$ by  Claim \ref{T}. Thus,
\begin{align}
e(\widehat{G'})\le  |T|\left(\frac{\ell-1}{\ell}-\eta\right)n +\varepsilon n^{2}&\leq 10^6\ell^3(3r-4)^3\varepsilon n^2. \notag
\end{align}
\end{proof}

By the proof of Claim \ref{T-neq-empty} and the fact that  $G^\ast$ is $K_r$-maximal, we know that adding any non-edge $e \in E(\widehat{G'})$ to $G'$ must form a copy of $K_{\ell+1}$ with some  $t$ vertices in $T$ for $t \in [\ell - 1]$. For each $e \in E(\widehat{G'})$ and $t \in [\ell - 1]$, we say $e$ has {\em property $t$} if adding $e$ to $G^\ast$ forms a copy of $K_{\ell + 1}$ with exactly $t$ vertices in $T$. Let $E_{t}$ denote the collection non-edges in $\widehat{G'}$ of property $t$ and $\mathcal{C}_{t}$ denote the collection of copies of $K_{t}$ in $G^\ast[T]$. Then $E(\widehat{G'})=\cup_{i=1}^{\ell -1}E_{t}$.

For $t \in [\ell - 1]$, we find a subset $\mathcal{Z}(t)$ with small size and covering all the edges in $E_t$. This implies that the subgraph of $G^\ast$ induced by $V(G^\ast)-T-\cup_{i=1}^{\ell -1}\mathcal{Z}(t)$ is a complete $\ell$-partite  graph.  For $t \in [\ell - 1]$, let $\mathcal{C}_{t}$ be the collection of cliques of size $t$ in $T$, and define
\[
 \mathcal{B}_{t} = \{G'[N_{G^\ast}(K) \cap V(G')] : K \in \mathcal{C}_{t}\}.
\]
Note that each graph in the collection of $\mathcal{B}_{t}$ is $K_{\ell + 1 - t}$-free and every non-edge of $E_{t}$ between $V_{i}$ and $V_{j}$ is $(\ell + 1 - t)$-saturating and both of its end are in some graph of $\mathcal{B}_{t}$. Thus, we can use  Lemma \ref{lem} to  obtain a set $\mathcal{Z}_{ij}(t) \subseteq V_{i} \cup V_{j}$ that covers every $(\ell + 1)$-saturating  edge between $V_{i}$ and $V_{j}$ with  property $t$ and
\begin{align}\label{The Lower Bound of I}
|I_{\widehat{G'}}(\mathcal{Z}_{ij}(t))| \geq c^{'}_{\ell,t}|\mathcal{C}_{t}|^{\frac{-1}{\ell - t}}|\mathcal{Z}_{ij}(t)|^{\frac{\ell + 1 - t}{\ell - t}}.
\end{align}


By Claim $\ref{claim1}$ we have
\begin{align}\label{upper bound of I}
|I_{\widehat{G'}}(\mathcal{Z}_{ij}(t))| \leq e(\widehat{G'}) \leq \lambda_{r, \ell}\varepsilon n^{2}.
\end{align}
Note that the number of copies of $K_t$ in $\mathcal{C}$ is  at most
\begin{align}\label{Ct}
|\mathcal{C}_{t}| \leq \binom{|T|}{t} \leq |T|^t \leq \left(10^6\ell^3(3r-4)^3\varepsilon n\right)^t.
\end{align}
Combining \eqref{The Lower Bound of I}, \eqref{upper bound of I} and \eqref{Ct}, we conclude that
\begin{align*}
|\mathcal{Z}_{ij}(t)|^{\frac{\ell + 1 - t}{\ell - t}} \leq  (c'_{\ell, t})^{-1}\lambda_{r, \ell}\varepsilon n^{2}|\mathcal{C}_{t}|^{\frac{1}{\ell - t}}
\leq  (c'_{\ell, t})^{-1}\lambda_{r,\ell}\varepsilon n^2\left(10^6\ell^3(3r-4)^3\varepsilon n \right)^{\frac{t}{\ell-t}}.
\end{align*}
This implies that
\[|\mathcal{Z}_{ij}(t)| \leq C_{\ell, t}(\varepsilon^{\frac{\ell}{\ell + 1 - t}}n^{\frac{\ell - 1}{\ell + 1 - t}})n,
\]
where
\[
C_{\ell, t} = \left[(c'_{\ell, t})^{-1}\lambda_{r,
\ell} \left(10^6\ell^3(3r-4)^3\right)^{\frac{t}{\ell-t}}\right]^{\frac{\ell - t}{\ell - t + 1}}.
\]
Let
\[
\mathcal{Z} = \cup_{t = 1}^{\ell - 1}\cup_{1\le i <j \le \ell}\mathcal{Z}_{ij}(t).
\]
Then $\mathcal{Z}$ covers every non-edge between the parts $V_{1},\ldots,V_{\ell}$, and
\begin{align*}
|\mathcal{Z}| \leq & \sum_{t = 1}^{\ell - 1}\sum_{i \neq j \in [\ell]}C_{\ell, t}(\varepsilon^{\frac{\ell}{\ell + 1 - t}}n^{\frac{\ell - 1}{\ell + 1 - t}})n
\\ \leq & \max_{t \in[\ell - 1]} \left\{(\ell - 1)\binom{\ell}{2} C_{\ell, t}\left(\varepsilon^{\frac{\ell}{\ell + 1 - t}}n^{\frac{\ell - 1}{\ell + 1 - t}}\right)n \right\}
\\ \leq & C_{\ell}\varepsilon n^{\frac{\ell - 1}{\ell}}n,
\end{align*}
where
\[
C_{\ell} = \max_{t \in[\ell - 1]} \left\{(\ell - 1)\binom{\ell}{2} C_{\ell, t}\right\}.
\]

Finally, let $G^\circ=G^\ast - \mathcal{Z} - T$. Then $G^\circ$ is a complete $\ell$-partite graph with
\[
v(G^\circ)\ge \left(1 - C_{\ell}\varepsilon n^{1 - 1/\ell} - 10^6 \ell^3(3r-4)^3 \varepsilon \right)n.
 \]
 By Lemma \ref{Maximal Kl if and only if maximal Kr}, we have a complete $\ell$-partite subgraph of $\mathcal{H}$, which completes the proof.
\end{proof}

\subsection{Tightness of Theorem~\ref{th2}}\label{sub3.2}

In this subsection we show the bound on the number of edges in Theorem \ref{th2} is tight using a construction given by Popielarz, Sahasrabudhe and Snyder \cite{popielarz2018stability}. Now we introduce it. Let $\ell, s, s_1, \dots, s_{\ell-1} \geq 2$ be integers. First, we define a sequence of graphs $G_{2,s_1}$, $G_{3,s_1,s_2}$, $\dots$, $G_{\ell, s_1,\dots,s_{\ell-1}}$ inductively, where $G_{i,s_1,\dots,s_{i-1}}$ will be an $i$-partite graph. First, $G_{2, s_1}$ is the complete bipartite graph $K_{s_1, s_1}$. Let $2 \leq t \leq \ell -1$ and assume that we have defined a $t$-partite graph $G_{t, s_1, s_2,\dots,s_{t-1}}$. Then, we define $G_{t+1,s_1,\dots,s_t}$ as follows:  Let $H_1, \dots, H_{s_t}$ be vertex disjoint copies of $G_{t, s_1, s_2,\dots,s_{t-1}}$ and suppose that $H_p$ has vertex sets $A_1^p,\dots,A_t^p$ for each $p \in [s_t]$. Define $G_{t+1,s_1,\dots,s_t}$ to be the $(t+1)$-partite graph with the first $t$ vertex sets defined as $A_i := A_i^1 \cup \dots \cup A_i^{s_t}$ for $i \in [t]$, and with the $(t+1)$-th vertex set defined as a collection of new vertices $A_{t+1} = \{x_1, \dots, x_{s_t}\}$. Let
\[
E(G_{t+1, s_1, \dots,s_t}) = \bigcup_{p=1}^{s_t}\left(E(H_p)\cup \{x_py : y \in H_q, p, q \in [s_t], p\neq q\}\right).
\]

Let $G_{\ell,s} = G_{\ell,2s,s,\dots,s}$ for $s \geq 2$ (See Figure \ref{Gls}). Then $G_{\ell,s}$ has $\frac{s}{s-1}\left(4s^{\ell-1}-3s^{\ell-2}-1\right)\leq 4\frac{s^\ell}{s-1}$ vertices and at most $4(\ell-1)s^\ell$ edges.

\begin{figure}
\begin{center}

\tikzset{every picture/.style={line width=0.75pt}} 

\begin{tikzpicture}[x=0.75pt,y=0.75pt,yscale=-0.8,xscale=0.8]

\draw  [fill={rgb, 255:red, 155; green, 155; blue, 155 }  ,fill opacity=0.45 ] (33.02,246.75) .. controls (28.37,246.72) and (24.61,242.91) .. (24.63,238.26) -- (24.93,146.93) .. controls (24.95,142.28) and (28.73,138.54) .. (33.38,138.58) -- (58.66,138.78) .. controls (63.31,138.82) and (67.07,142.62) .. (67.05,147.27) -- (66.75,238.6) .. controls (66.73,243.25) and (62.95,246.99) .. (58.3,246.96) -- cycle ;
\draw  [fill={rgb, 255:red, 155; green, 155; blue, 155 }  ,fill opacity=0.45 ] (115.51,247.41) .. controls (110.86,247.38) and (107.1,243.57) .. (107.12,238.92) -- (107.42,147.59) .. controls (107.44,142.94) and (111.22,139.2) .. (115.87,139.24) -- (141.15,139.44) .. controls (145.8,139.48) and (149.56,143.28) .. (149.54,147.93) -- (149.24,239.26) .. controls (149.22,243.91) and (145.44,247.65) .. (140.79,247.62) -- cycle ;
\draw    (48.79,214.49) -- (127.64,165.04) ;
\draw    (48.97,166.08) -- (125.57,214.77) ;
\draw  [fill={rgb, 255:red, 0; green, 0; blue, 0 }  ,fill opacity=1 ] (50.02,217.54) .. controls (48.64,217.53) and (47.52,216.4) .. (47.52,215.02) .. controls (47.53,213.64) and (48.65,212.53) .. (50.03,212.54) .. controls (51.41,212.55) and (52.53,213.68) .. (52.52,215.06) .. controls (52.52,216.44) and (51.4,217.55) .. (50.02,217.54) -- cycle ;
\draw  [fill={rgb, 255:red, 0; green, 0; blue, 0 }  ,fill opacity=1 ] (50.81,169.13) .. controls (49.43,169.12) and (48.31,168) .. (48.31,166.61) .. controls (48.32,165.23) and (49.44,164.12) .. (50.82,164.13) .. controls (52.2,164.15) and (53.32,165.27) .. (53.31,166.65) .. controls (53.31,168.04) and (52.19,169.15) .. (50.81,169.13) -- cycle ;
\draw  [fill={rgb, 255:red, 0; green, 0; blue, 0 }  ,fill opacity=1 ] (126.75,218.79) .. controls (125.37,218.78) and (124.25,217.65) .. (124.26,216.27) .. controls (124.26,214.89) and (125.39,213.78) .. (126.77,213.79) .. controls (128.15,213.8) and (129.26,214.93) .. (129.26,216.31) .. controls (129.25,217.69) and (128.13,218.8) .. (126.75,218.79) -- cycle ;
\draw  [fill={rgb, 255:red, 0; green, 0; blue, 0 }  ,fill opacity=1 ] (127.55,168.08) .. controls (126.17,168.07) and (125.05,166.94) .. (125.06,165.56) .. controls (125.06,164.18) and (126.18,163.07) .. (127.56,163.08) .. controls (128.94,163.09) and (130.06,164.22) .. (130.06,165.6) .. controls (130.05,166.98) and (128.93,168.09) .. (127.55,168.08) -- cycle ;
\draw  [fill={rgb, 255:red, 155; green, 155; blue, 155 }  ,fill opacity=0.45 ] (242.56,78.07) .. controls (239.96,78.02) and (237.85,75.87) .. (237.84,73.28) -- (237.73,24.23) .. controls (237.73,21.64) and (239.83,19.57) .. (242.42,19.62) -- (256.52,19.9) .. controls (259.12,19.95) and (261.23,22.09) .. (261.23,24.69) -- (261.34,73.73) .. controls (261.35,76.33) and (259.25,78.39) .. (256.65,78.34) -- cycle ;
\draw  [fill={rgb, 255:red, 155; green, 155; blue, 155 }  ,fill opacity=0.45 ] (288.57,78.96) .. controls (285.97,78.91) and (283.86,76.77) .. (283.85,74.17) -- (283.74,25.12) .. controls (283.74,22.53) and (285.84,20.46) .. (288.43,20.52) -- (302.53,20.79) .. controls (305.13,20.84) and (307.24,22.98) .. (307.24,25.58) -- (307.35,74.63) .. controls (307.36,77.22) and (305.26,79.29) .. (302.66,79.24) -- cycle ;
\draw    (251.25,60.74) -- (295.08,34.53) ;
\draw    (251.2,34.58) -- (294.08,61.39) ;
\draw  [fill={rgb, 255:red, 0; green, 0; blue, 0 }  ,fill opacity=1 ] (251.62,62.39) .. controls (251.03,62.38) and (250.55,61.63) .. (250.54,60.73) .. controls (250.54,59.82) and (251.02,59.1) .. (251.61,59.11) .. controls (252.21,59.12) and (252.69,59.86) .. (252.69,60.77) .. controls (252.69,61.68) and (252.21,62.4) .. (251.62,62.39) -- cycle ;
\draw  [fill={rgb, 255:red, 0; green, 0; blue, 0 }  ,fill opacity=1 ] (251.92,36.24) .. controls (251.32,36.23) and (250.84,35.48) .. (250.84,34.58) .. controls (250.84,33.67) and (251.32,32.94) .. (251.91,32.96) .. controls (252.5,32.97) and (252.98,33.71) .. (252.99,34.62) .. controls (252.99,35.53) and (252.51,36.25) .. (251.92,36.24) -- cycle ;
\draw  [fill={rgb, 255:red, 0; green, 0; blue, 0 }  ,fill opacity=1 ] (294.42,63.56) .. controls (293.83,63.55) and (293.35,62.81) .. (293.35,61.9) .. controls (293.34,60.99) and (293.82,60.27) .. (294.42,60.28) .. controls (295.01,60.29) and (295.49,61.04) .. (295.49,61.94) .. controls (295.5,62.85) and (295.02,63.57) .. (294.42,63.56) -- cycle ;
\draw  [fill={rgb, 255:red, 0; green, 0; blue, 0 }  ,fill opacity=1 ] (294.71,36.17) .. controls (294.12,36.16) and (293.64,35.41) .. (293.64,34.51) .. controls (293.64,33.6) and (294.11,32.87) .. (294.71,32.88) .. controls (295.3,32.9) and (295.78,33.64) .. (295.78,34.55) .. controls (295.79,35.45) and (295.31,36.18) .. (294.71,36.17) -- cycle ;
\draw  [fill={rgb, 255:red, 155; green, 155; blue, 155 }  ,fill opacity=0.45 ] (362,22) .. controls (362,18.24) and (365.04,15.2) .. (368.8,15.2) -- (389.2,15.2) .. controls (392.96,15.2) and (396,18.24) .. (396,22) -- (396,347.4) .. controls (396,351.16) and (392.96,354.2) .. (389.2,354.2) -- (368.8,354.2) .. controls (365.04,354.2) and (362,351.16) .. (362,347.4) -- cycle ;
\draw  [dash pattern={on 4.5pt off 4.5pt}] (226.56,24.57) .. controls (226.56,14.72) and (234.54,6.73) .. (244.4,6.73) -- (297.93,6.73) .. controls (307.79,6.73) and (315.77,14.72) .. (315.77,24.57) -- (315.77,88.12) .. controls (315.77,97.98) and (307.79,105.97) .. (297.93,105.97) -- (244.4,105.97) .. controls (234.54,105.97) and (226.56,97.98) .. (226.56,88.12) -- cycle ;
\draw  [fill={rgb, 255:red, 155; green, 155; blue, 155 }  ,fill opacity=0.45 ] (241.56,195.07) .. controls (238.96,195.02) and (236.85,192.87) .. (236.84,190.28) -- (236.73,141.23) .. controls (236.73,138.64) and (238.83,136.57) .. (241.42,136.62) -- (255.52,136.9) .. controls (258.12,136.95) and (260.23,139.09) .. (260.23,141.69) -- (260.34,190.73) .. controls (260.35,193.33) and (258.25,195.39) .. (255.65,195.34) -- cycle ;
\draw  [fill={rgb, 255:red, 155; green, 155; blue, 155 }  ,fill opacity=0.45 ] (287.57,195.96) .. controls (284.97,195.91) and (282.86,193.77) .. (282.85,191.17) -- (282.74,142.12) .. controls (282.74,139.53) and (284.84,137.46) .. (287.43,137.52) -- (301.53,137.79) .. controls (304.13,137.84) and (306.24,139.98) .. (306.24,142.58) -- (306.35,191.63) .. controls (306.36,194.22) and (304.26,196.29) .. (301.66,196.24) -- cycle ;
\draw    (250.25,177.74) -- (294.08,151.53) ;
\draw    (250.2,151.58) -- (293.08,178.39) ;
\draw  [fill={rgb, 255:red, 0; green, 0; blue, 0 }  ,fill opacity=1 ] (250.62,179.39) .. controls (250.03,179.38) and (249.55,178.63) .. (249.54,177.73) .. controls (249.54,176.82) and (250.02,176.1) .. (250.61,176.11) .. controls (251.21,176.12) and (251.69,176.86) .. (251.69,177.77) .. controls (251.69,178.68) and (251.21,179.4) .. (250.62,179.39) -- cycle ;
\draw  [fill={rgb, 255:red, 0; green, 0; blue, 0 }  ,fill opacity=1 ] (250.92,153.24) .. controls (250.32,153.23) and (249.84,152.48) .. (249.84,151.58) .. controls (249.84,150.67) and (250.32,149.94) .. (250.91,149.96) .. controls (251.5,149.97) and (251.98,150.71) .. (251.99,151.62) .. controls (251.99,152.53) and (251.51,153.25) .. (250.92,153.24) -- cycle ;
\draw  [fill={rgb, 255:red, 0; green, 0; blue, 0 }  ,fill opacity=1 ] (293.42,180.56) .. controls (292.83,180.55) and (292.35,179.81) .. (292.35,178.9) .. controls (292.34,177.99) and (292.82,177.27) .. (293.42,177.28) .. controls (294.01,177.29) and (294.49,178.04) .. (294.49,178.94) .. controls (294.5,179.85) and (294.02,180.57) .. (293.42,180.56) -- cycle ;
\draw  [fill={rgb, 255:red, 0; green, 0; blue, 0 }  ,fill opacity=1 ] (293.71,153.17) .. controls (293.12,153.16) and (292.64,152.41) .. (292.64,151.51) .. controls (292.64,150.6) and (293.11,149.87) .. (293.71,149.88) .. controls (294.3,149.9) and (294.78,150.64) .. (294.78,151.55) .. controls (294.79,152.45) and (294.31,153.18) .. (293.71,153.17) -- cycle ;
\draw  [dash pattern={on 4.5pt off 4.5pt}] (226.56,141.57) .. controls (226.56,131.72) and (234.54,123.73) .. (244.4,123.73) -- (297.93,123.73) .. controls (307.79,123.73) and (315.77,131.72) .. (315.77,141.57) -- (315.77,206.12) .. controls (315.77,215.98) and (307.79,223.97) .. (297.93,223.97) -- (244.4,223.97) .. controls (234.54,223.97) and (226.56,215.98) .. (226.56,206.12) -- cycle ;
\draw  [fill={rgb, 255:red, 155; green, 155; blue, 155 }  ,fill opacity=0.45 ] (242.56,335.07) .. controls (239.96,335.02) and (237.85,332.87) .. (237.84,330.28) -- (237.73,281.23) .. controls (237.73,278.64) and (239.83,276.57) .. (242.42,276.62) -- (256.52,276.9) .. controls (259.12,276.95) and (261.23,279.09) .. (261.23,281.69) -- (261.34,330.73) .. controls (261.35,333.33) and (259.25,335.39) .. (256.65,335.34) -- cycle ;
\draw  [fill={rgb, 255:red, 155; green, 155; blue, 155 }  ,fill opacity=0.45 ] (288.57,335.96) .. controls (285.97,335.91) and (283.86,333.77) .. (283.85,331.17) -- (283.74,282.12) .. controls (283.74,279.53) and (285.84,277.46) .. (288.43,277.52) -- (302.53,277.79) .. controls (305.13,277.84) and (307.24,279.98) .. (307.24,282.58) -- (307.35,331.63) .. controls (307.36,334.22) and (305.26,336.29) .. (302.66,336.24) -- cycle ;
\draw    (251.25,317.74) -- (295.08,291.53) ;
\draw    (251.2,291.58) -- (294.08,318.39) ;
\draw  [fill={rgb, 255:red, 0; green, 0; blue, 0 }  ,fill opacity=1 ] (251.62,319.39) .. controls (251.03,319.38) and (250.55,318.63) .. (250.54,317.73) .. controls (250.54,316.82) and (251.02,316.1) .. (251.61,316.11) .. controls (252.21,316.12) and (252.69,316.86) .. (252.69,317.77) .. controls (252.69,318.68) and (252.21,319.4) .. (251.62,319.39) -- cycle ;
\draw  [fill={rgb, 255:red, 0; green, 0; blue, 0 }  ,fill opacity=1 ] (251.92,293.24) .. controls (251.32,293.23) and (250.84,292.48) .. (250.84,291.58) .. controls (250.84,290.67) and (251.32,289.94) .. (251.91,289.96) .. controls (252.5,289.97) and (252.98,290.71) .. (252.99,291.62) .. controls (252.99,292.53) and (252.51,293.25) .. (251.92,293.24) -- cycle ;
\draw  [fill={rgb, 255:red, 0; green, 0; blue, 0 }  ,fill opacity=1 ] (294.42,320.56) .. controls (293.83,320.55) and (293.35,319.81) .. (293.35,318.9) .. controls (293.34,317.99) and (293.82,317.27) .. (294.42,317.28) .. controls (295.01,317.29) and (295.49,318.04) .. (295.49,318.94) .. controls (295.5,319.85) and (295.02,320.57) .. (294.42,320.56) -- cycle ;
\draw  [fill={rgb, 255:red, 0; green, 0; blue, 0 }  ,fill opacity=1 ] (294.71,293.17) .. controls (294.12,293.16) and (293.64,292.41) .. (293.64,291.51) .. controls (293.64,290.6) and (294.11,289.87) .. (294.71,289.88) .. controls (295.3,289.9) and (295.78,290.64) .. (295.78,291.55) .. controls (295.79,292.45) and (295.31,293.18) .. (294.71,293.17) -- cycle ;
\draw [dash pattern={on 4.5pt off 4.5pt}] (227.56,281.57) .. controls (227.56,271.72) and (235.54,263.73) .. (245.4,263.73) -- (298.93,263.73) .. controls (308.79,263.73) and (316.77,271.72) .. (316.77,281.57) -- (316.77,350.12) .. controls (316.77,359.98) and (308.79,367.97) .. (298.93,367.97) -- (245.4,367.97) .. controls (235.54,367.97) and (227.56,359.98) .. (227.56,350.12) -- cycle ;
\draw  [fill={rgb, 255:red, 0; green, 0; blue, 0 }  ,fill opacity=1 ] (374.8,45.1) .. controls (374.8,43.39) and (376.19,42) .. (377.9,42) .. controls (379.61,42) and (381,43.39) .. (381,45.1) .. controls (381,46.81) and (379.61,48.2) .. (377.9,48.2) .. controls (376.19,48.2) and (374.8,46.81) .. (374.8,45.1) -- cycle ;
\draw  [fill={rgb, 255:red, 0; green, 0; blue, 0 }  ,fill opacity=1 ] (374.8,166.1) .. controls (374.8,164.39) and (376.19,163) .. (377.9,163) .. controls (379.61,163) and (381,164.39) .. (381,166.1) .. controls (381,167.81) and (379.61,169.2) .. (377.9,169.2) .. controls (376.19,169.2) and (374.8,167.81) .. (374.8,166.1) -- cycle ;
\draw  [fill={rgb, 255:red, 0; green, 0; blue, 0 }  ,fill opacity=1 ] (374.8,305.1) .. controls (374.8,303.39) and (376.19,302) .. (377.9,302) .. controls (379.61,302) and (381,303.39) .. (381,305.1) .. controls (381,306.81) and (379.61,308.2) .. (377.9,308.2) .. controls (376.19,308.2) and (374.8,306.81) .. (374.8,305.1) -- cycle ;
\draw    (311,53) -- (377.9,166.1) ;
\draw    (311,53) -- (377.9,305.1) ;
\draw    (311,169) -- (377.9,45.1) ;
\draw    (311,169) -- (377.9,305.1) ;
\draw    (311,306.6) -- (377.9,166.1) ;
\draw    (311,306.6) -- (377.9,45.1) ;
\draw  [fill={rgb, 255:red, 0; green, 0; blue, 0 }  ,fill opacity=1 ] (376.97,214.75) .. controls (376.97,215.44) and (377.53,216) .. (378.22,216) .. controls (378.91,216) and (379.47,215.44) .. (379.47,214.75) .. controls (379.47,214.06) and (378.91,213.5) .. (378.22,213.5) .. controls (377.53,213.5) and (376.97,214.06) .. (376.97,214.75) -- cycle ;
\draw  [fill={rgb, 255:red, 0; green, 0; blue, 0 }  ,fill opacity=1 ] (376.97,234.75) .. controls (376.97,235.44) and (377.53,236) .. (378.22,236) .. controls (378.91,236) and (379.47,235.44) .. (379.47,234.75) .. controls (379.47,234.06) and (378.91,233.5) .. (378.22,233.5) .. controls (377.53,233.5) and (376.97,234.06) .. (376.97,234.75) -- cycle ;
\draw  [fill={rgb, 255:red, 0; green, 0; blue, 0 }  ,fill opacity=1 ] (376.97,254.75) .. controls (376.97,255.44) and (377.53,256) .. (378.22,256) .. controls (378.91,256) and (379.47,255.44) .. (379.47,254.75) .. controls (379.47,254.06) and (378.91,253.5) .. (378.22,253.5) .. controls (377.53,253.5) and (376.97,254.06) .. (376.97,254.75) -- cycle ;
\draw  [fill={rgb, 255:red, 0; green, 0; blue, 0 }  ,fill opacity=1 ] (269.47,231.31) .. controls (269.47,231.76) and (269.91,232.13) .. (270.47,232.13) .. controls (271.02,232.13) and (271.47,231.76) .. (271.47,231.31) .. controls (271.47,230.86) and (271.02,230.5) .. (270.47,230.5) .. controls (269.91,230.5) and (269.47,230.86) .. (269.47,231.31) -- cycle ;
\draw  [fill={rgb, 255:red, 0; green, 0; blue, 0 }  ,fill opacity=1 ] (269.47,244.35) .. controls (269.47,244.8) and (269.91,245.16) .. (270.47,245.16) .. controls (271.02,245.16) and (271.47,244.8) .. (271.47,244.35) .. controls (271.47,243.9) and (271.02,243.54) .. (270.47,243.54) .. controls (269.91,243.54) and (269.47,243.9) .. (269.47,244.35) -- cycle ;
\draw  [fill={rgb, 255:red, 0; green, 0; blue, 0 }  ,fill opacity=1 ] (269.47,257.39) .. controls (269.47,257.84) and (269.91,258.2) .. (270.47,258.2) .. controls (271.02,258.2) and (271.47,257.84) .. (271.47,257.39) .. controls (271.47,256.94) and (271.02,256.57) .. (270.47,256.57) .. controls (269.91,256.57) and (269.47,256.94) .. (269.47,257.39) -- cycle ;
\draw  [fill={rgb, 255:red, 155; green, 155; blue, 155 }  ,fill opacity=0.45 ] (641,22) .. controls (641,18.24) and (644.04,15.2) .. (647.8,15.2) -- (668.2,15.2) .. controls (671.96,15.2) and (675,18.24) .. (675,22) -- (675,347.4) .. controls (675,351.16) and (671.96,354.2) .. (668.2,354.2) -- (647.8,354.2) .. controls (644.04,354.2) and (641,351.16) .. (641,347.4) -- cycle ;
\draw  [fill={rgb, 255:red, 0; green, 0; blue, 0 }  ,fill opacity=1 ] (653.8,39.1) .. controls (653.8,37.39) and (655.19,36) .. (656.9,36) .. controls (658.61,36) and (660,37.39) .. (660,39.1) .. controls (660,40.81) and (658.61,42.2) .. (656.9,42.2) .. controls (655.19,42.2) and (653.8,40.81) .. (653.8,39.1) -- cycle ;
\draw  [fill={rgb, 255:red, 0; green, 0; blue, 0 }  ,fill opacity=1 ] (653.8,160.1) .. controls (653.8,158.39) and (655.19,157) .. (656.9,157) .. controls (658.61,157) and (660,158.39) .. (660,160.1) .. controls (660,161.81) and (658.61,163.2) .. (656.9,163.2) .. controls (655.19,163.2) and (653.8,161.81) .. (653.8,160.1) -- cycle ;
\draw  [fill={rgb, 255:red, 0; green, 0; blue, 0 }  ,fill opacity=1 ] (653.8,299.1) .. controls (653.8,297.39) and (655.19,296) .. (656.9,296) .. controls (658.61,296) and (660,297.39) .. (660,299.1) .. controls (660,300.81) and (658.61,302.2) .. (656.9,302.2) .. controls (655.19,302.2) and (653.8,300.81) .. (653.8,299.1) -- cycle ;
\draw    (590,47) -- (656.9,160.1) ;
\draw    (590,47) -- (656.9,299.1) ;
\draw    (590,163) -- (656.9,39.1) ;
\draw    (590,163) -- (656.9,299.1) ;
\draw    (590,300.6) -- (656.9,160.1) ;
\draw    (590,300.6) -- (656.9,39.1) ;
\draw  [fill={rgb, 255:red, 0; green, 0; blue, 0 }  ,fill opacity=1 ] (655.97,208.75) .. controls (655.97,209.44) and (656.53,210) .. (657.22,210) .. controls (657.91,210) and (658.47,209.44) .. (658.47,208.75) .. controls (658.47,208.06) and (657.91,207.5) .. (657.22,207.5) .. controls (656.53,207.5) and (655.97,208.06) .. (655.97,208.75) -- cycle ;
\draw  [fill={rgb, 255:red, 0; green, 0; blue, 0 }  ,fill opacity=1 ] (655.97,228.75) .. controls (655.97,229.44) and (656.53,230) .. (657.22,230) .. controls (657.91,230) and (658.47,229.44) .. (658.47,228.75) .. controls (658.47,228.06) and (657.91,227.5) .. (657.22,227.5) .. controls (656.53,227.5) and (655.97,228.06) .. (655.97,228.75) -- cycle ;
\draw  [fill={rgb, 255:red, 0; green, 0; blue, 0 }  ,fill opacity=1 ] (655.97,248.75) .. controls (655.97,249.44) and (656.53,250) .. (657.22,250) .. controls (657.91,250) and (658.47,249.44) .. (658.47,248.75) .. controls (658.47,248.06) and (657.91,247.5) .. (657.22,247.5) .. controls (656.53,247.5) and (655.97,248.06) .. (655.97,248.75) -- cycle ;
\draw  [dash pattern={on 4.5pt off 4.5pt}] (529.33,28.73) .. controls (529.33,20.6) and (535.93,14) .. (544.07,14) -- (588.27,14) .. controls (596.4,14) and (603,20.6) .. (603,28.73) -- (603,78.57) .. controls (603,86.7) and (596.4,93.3) .. (588.27,93.3) -- (544.07,93.3) .. controls (535.93,93.3) and (529.33,86.7) .. (529.33,78.57) -- cycle ;
\draw  [dash pattern={on 4.5pt off 4.5pt}] (529.33,133.73) .. controls (529.33,125.6) and (535.93,119) .. (544.07,119) -- (588.27,119) .. controls (596.4,119) and (603,125.6) .. (603,133.73) -- (603,183.57) .. controls (603,191.7) and (596.4,198.3) .. (588.27,198.3) -- (544.07,198.3) .. controls (535.93,198.3) and (529.33,191.7) .. (529.33,183.57) -- cycle ;
\draw  [dash pattern={on 4.5pt off 4.5pt}] (529.33,293.73) .. controls (529.33,285.6) and (535.93,279) .. (544.07,279) -- (588.27,279) .. controls (596.4,279) and (603,285.6) .. (603,293.73) -- (603,343.57) .. controls (603,351.7) and (596.4,358.3) .. (588.27,358.3) -- (544.07,358.3) .. controls (535.93,358.3) and (529.33,351.7) .. (529.33,343.57) -- cycle ;
\draw  [fill={rgb, 255:red, 0; green, 0; blue, 0 }  ,fill opacity=1 ] (565.47,225.31) .. controls (565.47,225.76) and (565.91,226.13) .. (566.47,226.13) .. controls (567.02,226.13) and (567.47,225.76) .. (567.47,225.31) .. controls (567.47,224.86) and (567.02,224.5) .. (566.47,224.5) .. controls (565.91,224.5) and (565.47,224.86) .. (565.47,225.31) -- cycle ;
\draw  [fill={rgb, 255:red, 0; green, 0; blue, 0 }  ,fill opacity=1 ] (565.47,238.35) .. controls (565.47,238.8) and (565.91,239.16) .. (566.47,239.16) .. controls (567.02,239.16) and (567.47,238.8) .. (567.47,238.35) .. controls (567.47,237.9) and (567.02,237.54) .. (566.47,237.54) .. controls (565.91,237.54) and (565.47,237.9) .. (565.47,238.35) -- cycle ;
\draw  [fill={rgb, 255:red, 0; green, 0; blue, 0 }  ,fill opacity=1 ] (565.47,251.39) .. controls (565.47,251.84) and (565.91,252.2) .. (566.47,252.2) .. controls (567.02,252.2) and (567.47,251.84) .. (567.47,251.39) .. controls (567.47,250.94) and (567.02,250.57) .. (566.47,250.57) .. controls (565.91,250.57) and (565.47,250.94) .. (565.47,251.39) -- cycle ;
\draw  [fill={rgb, 255:red, 0; green, 0; blue, 0 }  ,fill opacity=1 ] (421.33,188.83) .. controls (421.33,188) and (420.66,187.33) .. (419.83,187.33) .. controls (419,187.33) and (418.33,188) .. (418.33,188.83) .. controls (418.33,189.66) and (419,190.33) .. (419.83,190.33) .. controls (420.66,190.33) and (421.33,189.66) .. (421.33,188.83) -- cycle ;
\draw  [fill={rgb, 255:red, 0; green, 0; blue, 0 }  ,fill opacity=1 ] (441.33,188.83) .. controls (441.33,188) and (440.66,187.33) .. (439.83,187.33) .. controls (439,187.33) and (438.33,188) .. (438.33,188.83) .. controls (438.33,189.66) and (439,190.33) .. (439.83,190.33) .. controls (440.66,190.33) and (441.33,189.66) .. (441.33,188.83) -- cycle ;
\draw  [fill={rgb, 255:red, 0; green, 0; blue, 0 }  ,fill opacity=1 ] (458.33,188.83) .. controls (458.33,188) and (457.66,187.33) .. (456.83,187.33) .. controls (456,187.33) and (455.33,188) .. (455.33,188.83) .. controls (455.33,189.66) and (456,190.33) .. (456.83,190.33) .. controls (457.66,190.33) and (458.33,189.66) .. (458.33,188.83) -- cycle ;
\draw   (211.27,32.44) .. controls (206.6,32.46) and (204.28,34.8) .. (204.3,39.47) -- (204.89,181.52) .. controls (204.92,188.19) and (202.6,191.53) .. (197.93,191.55) .. controls (202.6,191.53) and (204.94,194.85) .. (204.97,201.52)(204.96,198.52) -- (205.57,344.14) .. controls (205.59,348.81) and (207.93,351.13) .. (212.6,351.11) ;
\draw   (521.93,30.44) .. controls (517.26,30.46) and (514.94,32.8) .. (514.96,37.47) -- (515.56,179.52) .. controls (515.59,186.19) and (513.27,189.53) .. (508.6,189.55) .. controls (513.27,189.53) and (515.61,192.85) .. (515.64,199.52)(515.63,196.52) -- (516.24,342.14) .. controls (516.26,346.81) and (518.6,349.13) .. (523.27,349.11) ;

\draw (71.67,260) node [anchor=north west][inner sep=0.75pt]  [font=\tiny] [align=left] {$K_{2s,2s}$};
\draw (73.67,386) node [anchor=north west][inner sep=0.75pt]  [font=\normalsize] [align=left] {$G_{2,s}$};
\draw (257,87) node [anchor=north west][inner sep=0.75pt]  [font=\tiny] [align=left] {$G_{2,s}$};
\draw (257,205) node [anchor=north west][inner sep=0.75pt]  [font=\tiny] [align=left] {$G_{2,s}$};
\draw (257,346) node [anchor=north west][inner sep=0.75pt]  [font=\tiny] [align=left] {$G_{2,s}$};
\draw (288,386) node [anchor=north west][inner sep=0.75pt]  [font=\normalsize] [align=left] {$G_{3,s}$};
\draw (567,386) node [anchor=north west][inner sep=0.75pt]  [font=\normalsize] [align=left] {$G_{\ell,s}$};
\draw (542,47) node [anchor=north west][inner sep=0.75pt]  [font=\tiny] [align=left] {$G_{\ell-1,s}$};
\draw (542,155) node [anchor=north west][inner sep=0.75pt]  [font=\tiny] [align=left] {$G_{\ell-1,s}$};
\draw (542,313) node [anchor=north west][inner sep=0.75pt]  [font=\tiny] [align=left] {$G_{\ell-1,s}$};
\draw (180.67,185.67) node [anchor=north west][inner sep=0.75pt]  [font=\normalsize] [align=left] {$s$};
\draw (490.67,185.67) node [anchor=north west][inner sep=0.75pt]  [font=\normalsize] [align=left] {$s$};

\end{tikzpicture}

\end{center}
\caption{The construction of $G_{\ell,s}$.}
\label{Gls}
\end{figure}
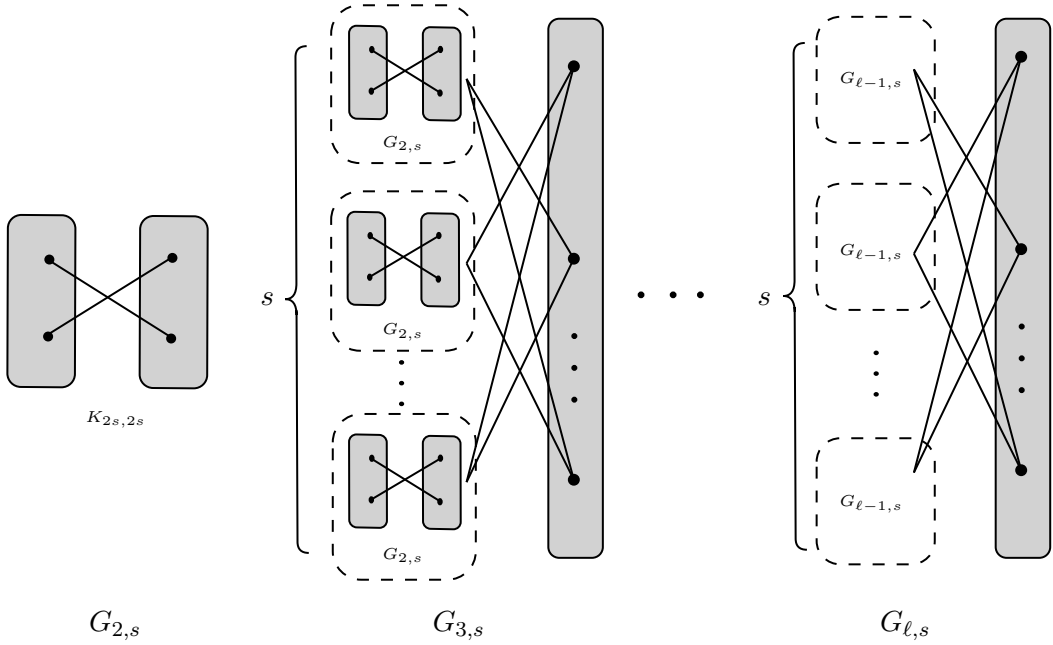

Then, for an integer $n$ with $4s^{\ell} \ell + s\le n\le 4s^{\ell} \ell + 2s$,  we construct a  $\ell$-partite graph $H_{\ell,s}(n)$ with $n$-vertices. Let $H_1,\dots,H_s$ be vertex disjoint copies of $G_{\ell,s}$ with vertex partitions $H_p = A_1^p \cup \dots \cup A_\ell^p$ for each $p \in [s]$. For $i\in[\ell]$, choose a positive integer $m_i$ satisfying
\[
\sum_{p=1}^t |A_i^p|+ m_i \in \left\{\left\lfloor\frac{n-s}{\ell}\right\rfloor, \left\lceil\frac{n-s}{\ell}\right\rceil \right\} \text{ and } \sum_{i=1}^{\ell}\left(\sum_{p=1}^s |A_i^p|+ m_i \right) = n-s.
\]
Let $A_i = A_i^1 \cup \dots \cup A_i^s \cup Y_i$, where
$Y_i$ is a collection of $m_i$ new vertices for $i \in [\ell]$, and let $A_{\ell+1} = \{x_1,\dots,x_s\}$ be a collection of $s$ new vertices. The vertex set of $H_{\ell,s}(n)$ is $\cup_{i=1}^{\ell+1}A_i$. The edge set is defined as follows: For $i\in[s]$, $x_i$ is joined to $V(H_i)$, and for $i,j \in[\ell]$, $x\in A_i$, $y\in A_j$, $xy$ is an edge if and only if $i\neq j$ and $xy$ is not in any of the graphs $H_1,\dots,H_t$.

Popielarz, Sahasrabudhe and Snyder \cite{popielarz2018stability} showed that $H_{\ell,s}(n)$ has the following properties.

\begin{proposition}[Popielarz, Sahasrabudhe and Snyder \cite{popielarz2018stability}]\label{independent set}
The graph $H_{\ell,s}(n)$ has the following properties.\\
1. Any induced complete $\ell$-partite subgraph of $H_{\ell,s}(n)$ has at most $n - 2s^{\ell}$ vertices. \\
2. If we embed a copy of any $K_{\ell+1}$-saturated graph on $A_{\ell+1}$, then the resulting graph is also a $K_{\ell+1}$-saturated graph.
\end{proposition}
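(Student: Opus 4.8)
The plan is to deduce both parts from the self-similar structure of $G_{\ell,s}$. First I would establish three facts about $G_{\ell,s}$, proved simultaneously by induction on $\ell$ (the base case $\ell=2$, $G_{2,s}=K_{2s,2s}$, being trivial for all three):
\begin{itemize}
\item[(i)] $G_{\ell,s}$ has no independent set meeting all $\ell$ vertex classes; equivalently, the $\ell$-partite complement $\widehat{G_{\ell,s}}$ is $K_\ell$-free.
\item[(ii)] for every edge $e$ of $G_{\ell,s}$ there is a transversal (one vertex from each class) whose unique edge is $e$; equivalently, adding any $\ell$-partite non-edge to $\widehat{G_{\ell,s}}$ creates a $K_\ell$.
\item[(iii)] for any $\ell-1$ of the classes of $G_{\ell,s}$ there is an independent set meeting each of them exactly once.
\end{itemize}
The inductive step would use the description of $G_{\ell,s}$ as $s$ vertex-disjoint copies of $G_{\ell-1,s}$ with no edges among them, together with a new class $\{x_1,\dots,x_s\}$ in which $x_p$ is joined to all copies except the $p$-th. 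I will use two consequences repeatedly: inside $H_{\ell,s}(n)$ one has $H_{\ell,s}(n)[V(H_p)]=\widehat{G_{\ell,s}}$ for each $p$, and the only neighbours of a vertex $x_p\in A_{\ell+1}$ are those of $V(H_p)$.

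For Part~2, set $H'=H_{\ell,s}(n)\cup F$, where $F$ is a $K_{\ell+1}$-saturated graph placed on $A_{\ell+1}$. To see $H'$ is $K_{\ell+1}$-free, note a copy of $K_{\ell+1}$ meets $A_{\ell+1}$ in at most one vertex (two would share a neighbour in $V(H_i)\cap V(H_j)=\emptyset$); if it meets $A_{\ell+1}$ in exactly $x_p$, the remaining $\ell$ vertices lie in $N(x_p)=V(H_p)$ and span a $K_\ell$ in $\widehat{G_{\ell,s}}$, contradicting (i); if it avoids $A_{\ell+1}$ it lies in the $\ell$-partite graph on $A_1\cup\dots\cup A_\ell$; and it cannot lie inside $A_{\ell+1}$ since $F$ is $K_{\ell+1}$-free. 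For saturation, I would take a non-edge $e$ of $H'$ and split into cases by the location of its endpoints. If $e$ lies in $A_{\ell+1}$, then $F+e$, hence $H'+e$, contains $K_{\ell+1}$. If $e=uv$ with $u\in A_i$, $v\in A_j$, $i\ne j\in[\ell]$, then $uv\in E(H_p)$ for some $p$, and (ii) applied to the copy of $G_{\ell,s}$ on $V(H_p)$ provides $w_1,\dots,w_{\ell-2}\in V(H_p)$, one per remaining class, with $\{u,v,w_1,\dots,w_{\ell-2}\}$ a $K_\ell$ in $(H'+e)[V(H_p)]$; appending $x_p$ (joined to all of $V(H_p)$) gives $K_{\ell+1}$. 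If $e=uv$ with $u,v$ in a single class $A_i$, choose $y_j\in Y_j$ for each $j\ne i$ (non-empty by the choice of the $m_j$), and $\{u,v\}\cup\{y_j:j\ne i\}$ spans $K_{\ell+1}$ in $H'+e$. Finally, if $e=x_qu$ with $u\in A_i\setminus V(H_q)$, then $u$ is joined to every vertex of $V(H_q)$ outside class $i$, so (iii) supplies a $K_{\ell-1}$ in $H_{\ell,s}(n)[V(H_q)]=\widehat{G_{\ell,s}}$ avoiding class $i$, and together with $u$ and $x_q$ this forms a $K_{\ell+1}$ in $H'+e$. All remaining pairs of $H'$ (between distinct copies, between a copy and a $Y$-class, or between distinct $Y$-classes) are already edges, so the cases are exhaustive.

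For Part~1, let $W$ induce a complete $\ell$-partite subgraph with classes $W_1,\dots,W_\ell$. If $W$ meets $A_{\ell+1}$, then since $N(x_p)=V(H_p)$ all but one class of $W$ is confined to a single $V(H_p)$, so $|W|\le\alpha(H_{\ell,s}(n))+|V(H_p)|=O(s^\ell)\le n-2s^\ell$; hence I may assume $W\subseteq A_1\cup\dots\cup A_\ell$. It then suffices to prove $|W\cap V(H_p)|\le|V(H_p)|-2s^{\ell-1}$ for each $p$, because summing over the $s$ copies and adding $|\bigcup_iY_i|$ gives $|W|\le(n-s)-2s^\ell$. Here $W\cap V(H_p)$ is an induced complete multipartite subgraph of $H_{\ell,s}(n)[V(H_p)]=\widehat{G_{\ell,s}}$, which by (i) has at most $\ell-1$ non-empty classes; since $2s^{\ell-1}$ is the size of a largest class of $G_{\ell,s}$, I would obtain the bound by induction on $\ell$ from the recursive description of $\widehat{G_{\ell,s}}$: if $W$ meets the top class of $\widehat{G_{\ell,s}}$ then $W\cap V(H_p)$ is confined to a single recursive copy and is small, and otherwise $W\cap V(H_p)$ is spread over the $s$ copies of $\widehat{G_{\ell-1,s}}$, to each of which the inductive estimate applies.

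The step I expect to be the main obstacle is this last induction in Part~1. The classes of $W$ need not respect the natural $\ell$-partition of $H_{\ell,s}(n)$: a class $W_a$ may split across two classes $A_i,A_j$ whenever the relevant cross pair is a non-edge, i.e.\ an edge of some $H_p$, and likewise the restriction of $W$ to a recursive copy of $G_{\ell-1,s}$ may distribute its surviving classes so as to leave, a priori, less slack than the largest-class bound. Pushing the induction through will require the right invariant---tracking simultaneously $|W\cap(\text{copy})|$ and the number of classes of $W$ meeting that copy---together with careful handling of split classes; this is precisely where the sharp exponent in Theorem~\ref{th8} is forced. Facts (i)--(iii), by contrast, should be routine inductions against the recursive construction.
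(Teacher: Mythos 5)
The paper itself offers no proof of this proposition: it is imported verbatim from Popielarz--Sahasrabudhe--Snyder, so your attempt has to be judged on its own. Your Part~2 does stand on its own: the $K_{\ell+1}$-freeness argument and the four-case saturation analysis are correct, and the auxiliary facts (i)--(iii) about $G_{\ell,s}$ are indeed routine simultaneous inductions (in the case of an edge $x_p y$ you correctly fall back on (iii) one level down, and in the case $e=x_q u$ with $u\notin V(H_q)$ you correctly use that such a $u$ is joined in $H_{\ell,s}(n)$ to all of $V(H_q)$ outside its own class).

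Part~1, however, contains a genuine gap, which you partly flag yourself. The reduction to the per-copy estimate $|W\cap V(H_p)|\le |V(H_p)|-2s^{\ell-1}$ is sound (and Case~A goes through once you actually bound $\alpha(H_{\ell,s}(n))$ by roughly $(n-s)/\ell$ plus lower-order terms -- asserting ``$=O(s^{\ell})$'' is not enough, since $n$ itself is of order $s^{\ell}$, but the bound is easy because an independent set meeting two of the classes $A_1,\dots,A_\ell$ must lie inside a single copy $V(H_p)$). The problem is the per-copy bound itself: the induction you sketch does not work as described. Its base case is false, since $\widehat{G_{2,s}}$ is the empty graph on $4s$ vertices and hence all of it is an induced complete ($1$-partite) multipartite subgraph, far exceeding $4s-2s$. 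Moreover the key dichotomy is wrong: if $W\cap V(H_p)$ contains a top-class vertex $x_c$ of the copy, it is \emph{not} confined to the recursive copy $B_c$, because the part of $W$ containing $x_c$ is an independent set of $\widehat{G_{\ell,s}}$ and may spread over all recursive copies; for instance $\{x_c\}\cup\bigl(A_1\setminus A_1(B_c)\bigr)$ is such a part of size about $2s^{\ell-1}$. So the ``confined and small'' case collapses, and the invariant you say still has to be found is precisely the missing content of the proof.

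For what it is worth, the per-copy bound is true for $\ell\ge 3$ and has a short direct proof that avoids your induction entirely. In a complete multipartite induced subgraph $W'$ of $\widehat{G_{\ell,s}}$, all vertices of $W'$ lying in one fixed class of $G_{\ell,s}$ belong to a single part (same-class vertices are nonadjacent). Edges of $G_{\ell,s}$ between the two large classes $A_1,A_2$ come only from the $s^{\ell-2}$ bottom copies of $K_{2s,2s}$. If $A_1$ and $A_2$ meet different parts of $W'$, every $A_1$--$A_2$ pair of $W'$ must be a $G_{\ell,s}$-non-edge, so each bottom block meets $W'$ on at most one of its two sides, giving $|W'\cap(A_1\cup A_2)|\le 2s\cdot s^{\ell-2}=2s^{\ell-1}$; if they meet the same part, all such pairs must be $G_{\ell,s}$-edges, forcing $W'\cap(A_1\cup A_2)$ into a single block, of size at most $4s\le 2s^{\ell-1}$. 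Either way $|W'|\le 2s^{\ell-1}+\sum_{i\ge 3}|A_i|=|V(G_{\ell,s})|-2s^{\ell-1}$, which is exactly what your summation over the $s$ copies requires.
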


\begin{theorem}
For ${\ell} \geq r \geq 3$, there exists a real number $c=c(\ell)$,  an integer $n$ and a $\mathcal{K}_{\ell+1}^{r}$-saturated $r$-graph $\mathcal{H}$ on $n$ vertices with $\left |\mathcal{H}\right| \geq t_{r}(n,\ell) - Cn^{r-1+1/\ell}$ for some $C=C(c)$ such that any complete $\ell$-partite subgraph of $\mathcal{H}$ has at most $(1 - c)n$ vertices.
\end{theorem}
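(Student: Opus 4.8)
The plan is to use the Popielarz--Sahasrabudhe--Snyder construction from Subsection~\ref{sub3.2}, lifted to $r$-graphs by taking cliques. Fix a large integer $s$ and an integer $n$ with $4\ell s^{\ell}+s\le n\le 4\ell s^{\ell}+2s$, so that $H_{\ell,s}(n)$ is defined and $s=\Theta_{\ell}(n^{1/\ell})$. Let $G$ be obtained from $H_{\ell,s}(n)$ by embedding on $A_{\ell+1}$ some $K_{\ell+1}$-saturated graph (for instance $K_{\ell-1}$ joined to an independent set of size $s-\ell+1$), and let $\mathcal H$ be the $r$-graph on $V(G)$ whose edges are exactly the copies of $K_{r}$ in $G$; thus $\partial_{r-2}\mathcal H\subseteq G$ and $|\mathcal H|=N(K_{r},G)$. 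By Proposition~\ref{independent set}(2) $G$ is $K_{\ell+1}$-saturated, so $\partial_{r-2}\mathcal H$ is $K_{\ell+1}$-free and $\mathcal H$ is $\mathcal K_{\ell+1}^{r}$-free by Observation~\ref{ob1}(a). It then remains to check three things: that $\mathcal H$ is $\mathcal K_{\ell+1}^{r}$-saturated, that $|\mathcal H|\ge t_{r}(n,\ell)-Cn^{r-1+1/\ell}$, and that every complete $\ell$-partite subgraph of $\mathcal H$ spans at most $(1-c)n$ vertices.

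For saturation, let $R$ be an $r$-set with $R\notin\mathcal H$. Then $R$ is not a copy of $K_{r}$ in $G$, so some pair $e\subseteq R$ is a non-edge of $G$; since $G$ is $K_{\ell+1}$-saturated and $K_{\ell+1}$-free, $G+e$ contains a copy of $K_{\ell+1}$ whose vertex set $S$ (with $|S|=\ell+1$) satisfies $\binom{S}{2}\setminus\{e\}\subseteq E(G)$. I claim every pair $f\in\binom{S}{2}\setminus\{e\}$ lies in a copy of $K_{r}$ in $G$: delete from $S$ one endpoint of $e$ not contained in $f$ (possible as $e\ne f$), obtaining a set $S'$ of $\ell\ge r$ vertices whose only possible non-edge $e$ has been removed, so $G[S']$ is a clique still containing $f$; then any $r$-subset of $S'$ containing $f$ is a copy of $K_{r}$ in $G$ through $f$. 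Hence $f\in\partial_{r-2}\mathcal H$ for all such $f$, while $e\in\binom{R}{2}$, so $\binom{S}{2}\subseteq\partial_{r-2}\mathcal H\cup\binom{R}{2}=\partial_{r-2}(\mathcal H\cup\{R\})$. Thus $\partial_{r-2}(\mathcal H\cup\{R\})$ contains a copy of $K_{\ell+1}$, and by Observation~\ref{ob1}(a) $\mathcal H\cup\{R\}$ contains a member of $\mathcal K_{\ell+1}^{r}$, so $\mathcal H$ is $\mathcal K_{\ell+1}^{r}$-saturated. (Equivalently, the same argument shows $\partial_{r-2}\mathcal H$ is $K_{r}$-maximal, $K_{\ell+1}$-free, and one applies Lemma~\ref{Maximal Kl if and only if maximal Kr}.)

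For the edge count, note that $G[A_{1}\cup\dots\cup A_{\ell}]$ is exactly the complete $\ell$-partite graph on the balanced parts $A_{1},\dots,A_{\ell}$ ($n-s$ vertices) with the edge set $\bigcup_{p=1}^{s}E(H_{p})$ deleted, and $\bigl|\bigcup_{p}E(H_{p})\bigr|=s\cdot e(G_{\ell,s})\le 4(\ell-1)s^{\ell+1}\le 4(\ell-1)n^{1+1/\ell}$. In a complete $\ell$-partite graph on $n$ vertices every edge lies in at most $\binom{\ell-2}{r-2}(n/\ell)^{r-2}\le n^{r-2}$ copies of $K_{r}$, so deleting those edges destroys at most $4(\ell-1)n^{r-1+1/\ell}$ copies of $K_{r}$; combined with $t_{r}(n,\ell)-t_{r}(n-s,\ell)=O_{\ell}(sn^{r-1})=O_{\ell}(n^{r-1+1/\ell})$, this yields $|\mathcal H|=N(K_{r},G)\ge N(K_r,G[A_1\cup\dots\cup A_\ell])\ge t_{r}(n-s,\ell)-4(\ell-1)n^{r-1+1/\ell}\ge t_{r}(n,\ell)-Cn^{r-1+1/\ell}$ for a suitable $C=C(\ell)$ (copies of $K_r$ meeting $A_{\ell+1}$ only help the lower bound).

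For the last point, a complete $\ell$-partite subgraph of $\mathcal H$ on a vertex set $W$ forces $G[W]$ itself to be complete $\ell$-partite (using $\ell\ge r$ and that $G$ is $K_{\ell+1}$-free), so it suffices to bound the largest induced complete $\ell$-partite subgraph of $G$. Proposition~\ref{independent set}(1) bounds this for $H_{\ell,s}(n)$ by $n-2s^{\ell}$; since $G$ differs from $H_{\ell,s}(n)$ only in edges inside $A_{\ell+1}$ and $|A_{\ell+1}|=s=o(n)$, the bound transfers via a short case analysis on $W'=W\cap(A_{1}\cup\dots\cup A_{\ell})$: either $G[W']=H_{\ell,s}(n)[W']$ still uses all $\ell$ parts, so $|W'|\le n-2s^{\ell}$ by Proposition~\ref{independent set}(1); or it uses at most $\ell-1$ parts, and since every independent set of $H_{\ell,s}(n)[A_1\cup\dots\cup A_\ell]$ lies in a single part $A_i$ or in a single column $V(H_p)$ (a clique of $G_{\ell,s}$, hence of size $\le\ell$), the parts of $G[W']$ occupy at most $\ell-1$ of the $A_i$ and $W'$ omits all but $O(\ell^{2})$ vertices of some $A_{i}$, giving $|W'|\le(1-\tfrac1\ell)n+O(\ell^{2})$. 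As $n\ge 4\ell s^{\ell}$, both alternatives give $|W|\le(1-c)n$ for some $c=c(\ell)>0$. The step I expect to be the real obstacle is the saturation check: unlike in the graph case of Theorem~\ref{th8}, where one argues directly with a saturated graph, here one must verify that replacing the saturated graph $G$ by the $r$-graph of its copies of $K_{r}$ preserves saturation, i.e.\ that all the ``old'' edges of the $K_{\ell+1}$ created by adding a missing pair land in $\partial_{r-2}\mathcal H$; the clique-pruning trick above is exactly what makes this go through, after which the counting and the transfer of Proposition~\ref{independent set}(1) are routine.
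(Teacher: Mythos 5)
Your construction and overall strategy are essentially those of the paper (Subsection~\ref{sub3.2}): take $H_{\ell,s}(n)$ with a suitable graph embedded on $A_{\ell+1}$, let $\mathcal{H}$ consist of all copies of $K_r$ in the resulting graph $G$, lower-bound $N(K_r,G)$, and transfer Proposition~\ref{independent set}(1). Where you genuinely diverge is the saturation check: the paper embeds $\partial_{r-2}\mathcal{H}_0$ for a $\mathcal{K}_{\ell+1}^r$-saturated $r$-graph $\mathcal{H}_0$, verifies that every edge of $G$ lies in a $K_r$ (so that $\partial_{r-2}\mathcal{H}=G$) and that $G$ is $K_r$-maximal, $K_{\ell+1}$-free, and then invokes Lemma~\ref{Maximal Kl if and only if maximal Kr}; you embed an arbitrary $K_{\ell+1}$-saturated graph, use Proposition~\ref{independent set}(2) to get $K_{\ell+1}$-saturation of $G$, and prove $\mathcal{K}_{\ell+1}^r$-saturation of $\mathcal{H}$ directly via the clique-pruning observation that $S$ minus an endpoint of the added pair $e$ is a clique of size $\ell\ge r$ through any other pair $f$. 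This is correct, avoids needing $\partial_{r-2}\mathcal{H}=G$ altogether, and is a slightly cleaner route for this component; your edge count is also valid and of the same order as the paper's.

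One sub-claim in your final step is false, though harmlessly so. An independent set of $H_{\ell,s}(n)[A_1\cup\dots\cup A_\ell]$ meeting two distinct parts $A_i$ does lie inside a single column $V(H_p)$, but it need not be a clique of $G_{\ell,s}$ and is not of size at most $\ell$: pairs inside the same $A_i$ are nonadjacent regardless of $H_p$, so for instance a vertex $y\in A_2^p$ together with all of its $H_p$-neighbours in $A_1^p$ is such an independent set, of size polynomial in $s$. Hence your $O(\ell^2)$ error term in the second case is unjustified. The case still closes because each column has only $v(G_{\ell,s})\le 4s^{\ell}/(s-1)=O_{\ell}(s^{\ell-1})=o(n)$ vertices: the at most $\ell-1$ parts of $G[W']$ confined to columns contribute $o(n)$ vertices in total, some $A_{i_0}$ contains none of the parts confined to an $A_i$, so $|W'\cap A_{i_0}|=o(n)$ and $|W'|\le(1-1/\ell)n+o(n)$, which suffices for the $(1-c)n$ bound. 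With that repair (and granting the implicit nondegeneracy convention that the complete $\ell$-partite subgraph has all $\ell$ parts nonempty, which the paper also relies on when it settles this step by a one-line appeal to Lemma~\ref{Maximal Kl if and only if maximal Kr} and Proposition~\ref{independent set}), your argument is complete.
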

\begin{proof}
For sufficiently large $s$, let $H_{\ell,s}(n)$ be the $(\ell+1)$-partite graph constructed above with $V(H_{\ell,s}(n))=\cup_{i=1}^{\ell+1}A_i$ and $A_{\ell+1}=\{x_1,\ldots,x_s\}$. Let $\mathcal{H}_0$ be a $\mathcal{K}_{\ell+1}^{r}$-saturated $r$-graph on $s$ vertices. We embed a copy of $\partial_{r-2} \mathcal{H}_0$ to $A_{\ell+1}$ in $H_{\ell,s}(n)$, and let the resulting graph be $G$. Then $G[A_{\ell+1}]$ is $K_r$-maximal, $K_{\ell+1}$-free by Lemma \ref{Maximal Kl if and only if maximal Kr}.

First, we show that $G$ is $K_r$-maximal, $K_{\ell+1}$-free. Note that if $x$ is a common neighbor of $x_i$ and $x_j$, then $x\in A_{\ell+1}$. This together with Proposition \ref{independent set} yields that $G$ is $K_{\ell+1}$-free. Let $E'\subseteq E(\overline{G})$ satisfy   $N(K_r, G+E')>N(K_r, G)$. If $E'\cap E({\overline{H_{\ell,s}(n)}})\neq \emptyset$, then $G+E'$ contains a copy of $K_{\ell+1}$ by Proposition \ref{independent set}. Otherwise, $E'\subseteq  E(\overline{G[A_{\ell+1}]})$, which implies that $G+E'$ contains a copy of $K_{\ell+1}$ in $A_{\ell+1}$. So,  $G$ is $K_r$-maximal, $K_{\ell+1}$-free.

Next, we show that each edge $uv\in E(G)$ is contained in a copy of $K_r$. It is obvious if   $u,v\in A_{\ell+1}$ by the construction of $G[A_{\ell+1}]$. Suppose that $u\in V_i$ and $v\in V_j$ for some $i,j\in [\ell]$ and $i\neq j$. Let $Z=\cup_{1\le a\le \ell, a\neq i,j}Y_a$. Then $G[\{u,v\}\cup Z]$ is a complete $\ell$-partite graph and we are done. Suppose that $u\in A_{\ell+1}$ and $v\notin  A_{\ell+1}$. Then we can get a copy of $K_r$ by finding the common neighbors greedily.

Then, we define the final $r$-graph $\mathcal{H}$ by $V(\mathcal{H}) = V(G)$ and
\[
\mathcal{H} = \left\{\{u_1, u_2, \dots, u_r\}: \{u_1, u_2, \dots, u_r\}\,\,{\rm forms}\,\, {\rm a \, copy \,of }\,\, K_r \,\, {\rm in} \,\, G\right\}.
\]
The above arguments shows that $\partial_{r-2} \mathcal{H}=G$ and  $\mathcal{H}$ is $\mathcal{K}_{\ell+1}^{r}$-saturated by Lemma \ref{Maximal Kl if and only if maximal Kr}.

Our goal is to calculate the lower bound of $|\mathcal{H}|$. So we need to calculate the number of missing $K_r$. By the construction, it is easy to see that the  number of non-edges between $A_1$ and $A_2$ is $4s^{\ell+1}$ in $G$, and $e(\widehat{G}[A_1,A_2]) \geq e(\widehat{G}[A_i,A_j])$ for each  $i,j\in[\ell]$ and $i\neq j$. Moreover, for each non-edges    $uv$ with $u\in V_i$ and $v\in V_j$, there are at most $\binom{\ell-1}{r-2}\left(\frac{n-s}{\ell}\right)^{r-2}$ copies of $K_r$ containing $uv$ and  intersecting at most one vertex in each $V_k$ for $k\in [\ell+1]$. For simplicity, let  $s = cn^{1/\ell}$ as $4s^{\ell} \ell + s\le n\le 4s^{\ell} \ell + 2s$. Then
\begin{align}
\mathcal{|H|} \geq & \binom{\ell}{r}\left(\frac{n-s}{\ell}\right)^r - \binom{\ell+1}{2} 4s^{\ell+1}\binom{\ell-1}{r-2}\left(\frac{n-s}{\ell}\right)^{r-2}-\binom{s}{r}             \notag\\
\geq & t_r(n,\ell)- \binom{\ell}{r}\frac{rs}{\ell^r}n^{r-1} - \binom{\ell+1}{2}4s^{\ell+1}\binom{\ell-1}{r-2}\frac{n^{r-2}}{\ell^{r-2}}\left(1-\frac{s}{n}\right)^{r-2}- \binom{s}{r}   \notag\\
\geq & t_r(n,\ell)- \binom{\ell}{r}\frac{rs}{\ell^r}n^{r-1} - \binom{\ell+1}{2}4s^{\ell+1}\binom{\ell-1}{r-2}\frac{n^{r-2}}{\ell^{r-2}}\left(1-(r-2)\frac{s}{n}\right)-n^{r/\ell}  \notag\\
\geq & t_r(n,\ell)- \left(\binom{\ell}{r}\frac{cr}{\ell^r} + \frac{4c^{\ell+1}\binom{\ell+1}{2}\binom{\ell-1}{r-1}}{\ell^{r-2}}+1\right)n^{r-1+1/\ell}.   \notag
\end{align}

By Lemma $\ref{Maximal Kl if and only if maximal Kr}$ and Proposition \ref{independent set}, any complete $\ell$-partite subgraph of $G$ has at most $n - 2c^{\ell}n$ vertices, which completes the proof.
\end{proof}

\section{Positive co-degree stability}\label{SEC:4}

\subsection{Proof of Theorem~\ref{main result}}\label{SEC:proof}

In this subsection, we will prove Theorem~\ref{main result}. Let $\ell \geq 3$ be an integer and
\[
f(\ell)= \begin{cases}
2/7 \quad & \ell=3,\\
\frac{3\ell-7}{3\ell-1} \quad & \ell \geq 4.
\end{cases}
\]
Suppose that $\mathcal{H}$ is a $\mathcal{K}_{\ell+1}^3$-saturated $3$-graph on $n$ vertices with $\delta_{2}^{+}(\mathcal{H}) > f(\ell) n$. By Lemma \ref{Maximal Kl if and only if maximal Kr}, $\partial\mathcal{H}$ is $K_3$-maximal, $K_{\ell+1}$-free and $t^{+}(\partial\mathcal{H}) > f(\ell) n$. Now we add some non-edges to $\partial\mathcal{H}$ to get a  $K_{\ell+1}$-saturated supergraph $G$ of $\partial\mathcal{H}$. It follows from $\partial\mathcal{H}$ is $K_3$-maximal, $K_{\ell+1}$-free that $N(K_3, G)=N(K_3, \partial\mathcal{H})$. So $t^{+}(G) = t^{+}(\partial\mathcal{H}) > f(\ell) n$. If $G$ is $\ell$-partite, then $\mathcal{H}$  is also $\ell$-partite by Observation \ref{l-partite}. Otherwise, $G$ contains a copy of $5$-wheel-like subgraph by Lemma~\ref{Lemma:maximal}.

We choose  a $5$-wheel-like subgraph $W_{\ell,k}$ of $G$ with the top $v$ and the bottom $u_1u_2$ such that the order $s = 2\ell - k+1$ of $W_{\ell,k}$  is minimum. Note that $0\leq k\leq \ell-2$. Let $R = V(Q_1) \cap V(Q_2)$, and let $X$ be the set of  the vertices in $G$ that are adjacent to all vertices in $R$. Then $V(W_{\ell, k})\setminus R\subseteq X$. For a vertex $w\in V(G)$ and a subset $S\subseteq V(G)$, let $e(w, S)=|S\cap N_G(v)|$. The following claim is easy.

\begin{claim}\label{Claim:vertex}
For  each vertex $w \in X$, $e(w, V(W_{\ell,k}))\le s-3$.
\end{claim}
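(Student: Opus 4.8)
The plan is to argue by contradiction. Suppose some $w\in X$ has $e(w,V(W_{\ell,k}))\ge s-2$, so that $w$ fails to be adjacent to at most two vertices of $V(W_{\ell,k})$; since $w$ is adjacent to every vertex of $R$, these at most two ``missed'' vertices lie in $(Q_1\setminus R)\cup(Q_2\setminus R)\cup\{v,u_1,u_2\}$. I would first carry out the main case $w\notin V(W_{\ell,k})$ and then dispose of the remaining possibilities $w\in V(W_{\ell,k})\setminus R$ at the end.

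The crucial observation is that each of the four sets $Q_1\cup\{v\}$, $Q_2\cup\{v\}$, $Q_1\cup\{u_1\}$, $Q_2\cup\{u_2\}$ induces a clique of order $\ell$ in $G$. As $G$ is $K_{\ell+1}$-free, $w$ must miss a vertex of each of these cliques, and since $w\in X$ that vertex avoids $R$. Writing $a,b$ for the number of vertices missed by $w$ in $Q_1\setminus R$ and in $Q_2\setminus R$, and $c,d_1,d_2\in\{0,1\}$ for whether $v,u_1,u_2$ are missed, the four facts read $a+c\ge1$, $b+c\ge1$, $a+d_1\ge1$, $b+d_2\ge1$, while the standing hypothesis gives $a+b+c+d_1+d_2\le2$. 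An elementary case check (the only solution is $a=b=1$, $c=d_1=d_2=0$) shows that $w$ is adjacent to $v$, $u_1$, $u_2$ and misses exactly one vertex $p\in Q_1\setminus R$ and one vertex $q\in Q_2\setminus R$.

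To finish the main case I would build a smaller $5$-wheel-like subgraph. If $k\le\ell-3$, put $Q_1'=(Q_1\setminus\{p\})\cup\{w\}$ and $Q_2'=(Q_2\setminus\{q\})\cup\{w\}$: these are cliques of order $\ell-1$ meeting in exactly $R\cup\{w\}$ (of size $k+1\le\ell-2$, so $Q_1'\ne Q_2'$), the vertex $v$ is adjacent to all of $Q_1'\cup Q_2'$, and $u_1$ (resp.\ $u_2$) is adjacent to all of $Q_1'$ (resp.\ $Q_2'$) with $u_1u_2\in E(G)$; hence $G$ contains a $W_{\ell,k+1}$, whose order is $s-1$, contradicting the minimality of $s$. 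If $k=\ell-2$ the construction degenerates, but then $R\cup\{w,u_1,u_2\}$ is a clique of order $\ell+1$, again impossible. Finally I handle $w\in V(W_{\ell,k})\setminus R$: if $w=v$ then $v$ misses $u_1$ and $u_2$ (otherwise $Q_i\cup\{v,u_i\}$ is a $K_{\ell+1}$), so $e(v,V(W_{\ell,k}))=|Q_1\cup Q_2|=s-3$, which incidentally shows the bound is sharp; if $w=u_i$ then $u_i$ misses $v$ and misses a vertex of the opposite $Q_j\setminus R$ (otherwise $Q_j\cup\{u_1,u_2\}$ is a $K_{\ell+1}$), giving $e(u_i,V(W_{\ell,k}))\le s-3$; and if $w\in Q_1\setminus R$ (symmetrically for $Q_2$) the same clique-extension obstructions — using $Q_2\cup\{v\}$, $Q_2\cup\{u_2\}$, and, when only one non-neighbour would appear, the $W_{\ell,k+1}$ construction above — force $w$ to miss at least two vertices of $(Q_2\setminus R)\cup\{u_2\}$, so once more $e(w,V(W_{\ell,k}))\le s-3$.

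The hard part is exactly the step that turns $a=b=1$ into a contradiction: the numerical constraints coming from $K_{\ell+1}$-freeness alone do \emph{not} exclude a vertex that misses one vertex of each of $Q_1\setminus R$ and $Q_2\setminus R$ while seeing everything else of $W_{\ell,k}$, so one genuinely needs the minimality of the chosen wheel through the $W_{\ell,k+1}$ construction, together with the ad hoc treatment of the degenerate boundary $k=\ell-2$. The bookkeeping in the subcase $w\in Q_1\setminus R$ is the most delicate, but it becomes routine once these obstructions are set up.
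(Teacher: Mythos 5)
Your proof is correct and follows essentially the same route as the paper: you force non-neighbours of $w$ via the $\ell$-cliques $Q_i\cup\{v\}$ and $Q_i\cup\{u_i\}$, and whenever only $s-2$ misses would occur you swap $w$ into the cliques (enlarging $R$) to obtain a $5$-wheel-like subgraph of order $s-1$, contradicting the minimality of $s$ — exactly the paper's argument. Your explicit handling of the boundary case $k=\ell-2$, where the shrunken wheel degenerates and one instead finds the $K_{\ell+1}$ on $R\cup\{w,u_1,u_2\}$, is a detail the paper's proof passes over silently.
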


\begin{proof}
First, we consider the case that $w\in V(W_{\ell, k})$. We notice that $G$ is $K_{\ell+1}$-free,  $vu_i\notin  E(G)$ and $e(u_i, V(Q_{3-i})\setminus R)\le |V(Q_{3-i})\setminus R|-1$ for $i=1,2$. This means $e(w, V(W_{\ell,k}))\le s-3$ if $w\in \{u_1, u_2, v\}$. Let $w\in V(Q_1)\setminus R$ or $ V(Q_2)\setminus R$, say $w\in V(Q_{1})\setminus R$. Note that $e(w, V(Q_{2})\setminus R)\le |V(Q_{2})\setminus R|-1$. If $e(w, V(W_{\ell,k})) = s-2$, then we can remove the vertices not adjacent to $w$ in $W_{\ell,k}$ and add $w$ to $R$. The resulting graph $W'$ is still a $5$-wheel-like graph with $|W'|<s$, a contradiction to the choice of $W_{\ell,k}$. So we have $e(w, V(W_{\ell,k}))\le s-3$.

Then we consider the case that $w\notin V(W_{\ell, k})$. It follows from $G$ is $K_{\ell+1}$-free that
\begin{align}\label{number-common-neighbor}
\max\left\{e(w, V(Q_i)\cup\{u_i\}), e(w, V(Q_i)\cup\{v\})\right\}\le \ell-1
\end{align}
for $i=1,2$. If $vw\notin E(G)$, then  we are done by \eqref{number-common-neighbor}. Let $vw\in E(G)$. If $e(w, V(W_{\ell,k}))\ge s-2$, then $e(w, V(Q_i))=\ell-2$ and $u_iw \in E(G)$ by \eqref{number-common-neighbor} for $i=1,2$. In this case, we can remove the vertices not adjacent to $w$ in $W_{\ell,k}$ and add $w$ to $W_{\ell,k}$. The resulting graph $W'$ is still a $5$-wheel-like graph with $|W'|<s$, a contradiction to the choice of $W_{\ell,k}$. This completes the proof of Claim \ref{Claim:vertex}.
\end{proof}

For $w\in V(G)$, we define that $w$ is not adjacent to itself. By the argument in the proof of Claim~\ref{Claim:vertex}, we have that if $w \in X$, then $w$ must be not adjacent to the vertices in one of the following eight cases.
\begin{align}\label{eight-case}
&(1): \{v,u_1,u_2\},(2): \{v,u_1,q_2\},(3): \{v,u_2,q_1\},(4): \{u_1,q_1,q_2\},\notag\\
&(5): \{u_2,q_1,q_2\},(6): \{q_1,q_1',q_2\},(7): \{q_1,q_2,q_2'\},(8): \{v,q_1,q_2\},
\end{align}
where $q_i$ and $q_i'$ are any two different vertices in $V(Q_i)\setminus R$ for $i=1 ,2$.

For $x\in V(G)$ and $e\in E(G)$, we say $x$ \emph{controls}  $e$ if $x$ is adjacent to the two endpoints of edge $e$, otherwise, we say it \emph{misses} $e$.  For $E'\subseteq E(G)$, we use $t(E')$ to denote the number of pairs $(e, w)$ with $e\in E'$ and $w\in V(G)$ such that $w$ controls $e$.

Let $E_1$ be the edge set of $W_{\ell,k}$. Then
\[
|E_1| = \ell^2 +\ell -\frac{1}{2}k^2-\frac{1}{2}k-1.
\]
We bound the number $t(E_1)$  by double counting. Note that if $wu\notin E(G)$ for some $u\in V(W_{\ell,k})$, then $w$ misses at least $d_{W_{\ell,k}}(u) $ edges in $E_1$. Thus, if $w \notin X$, there exists $u\in R$ with $d_{W_{\ell,k}}(u) =s-1=2\ell-k$ such that $wu\notin E(G)$. Then $w$ misses at least $2\ell -k$ edges in $E_1$. This implies that the number of pairs $(e, w)$ with $e\in E_1$ and $w\in V(G)\setminus X$  is at most
\begin{align}\label{number-tri-not-X}
(n-|X|)(|E_1|-(2\ell -k)).
\end{align}
Suppose that $w\in X$. Then $V(W_{\ell,k})$ has at least three vertices $\{x,y,z\}$ not adjacent to $w$, and $\{x,y,z\}$ belongs to one of the cases listed in \eqref{eight-case}. Note the number of missing edges in $E_1$ of $w$ is the number of edges in $E_1$ adjacent to the vertices in $\{x,y,z\}$, which is at least $3\ell-1$ in case (1)--(7). In case (8), if $k \leq \ell-3$, then $w$ misses at least $4\ell - k -4\geq 3\ell - 1$ edges in $E_1$. Otherwise, $wu_1\notin E(G)$ or $wu_2\notin E(G)$ (since otherwise, $\{w, u_1, u_2\}\cup R$ forms a copy of $K_{\ell+1}$), which also implies $w$ misses at least $3\ell - 1$ edges in $E_1$. Thus,  the number of pairs $(e, w)$ with $e\in E_1$ and $w\in X$  is at most
\begin{align}\label{number-tri-in-X}
|X|(|E_1|- (3\ell -1)).
\end{align}

Combining \eqref{number-tri-not-X} and \eqref{number-tri-in-X}, we have
\begin{align}\label{number-tri-in-E1}
t(E_1)\le |X|(|E_1|- (3\ell -1))+(n-|X|)(|E_1|-(2\ell -k)).
\end{align}
Note that if $k\ge 1$, then every edge $e\in E_1$ is contained in a triangle, which implies that $e$ is contained in at lest  $t^{+}(G)$ pairs $(e,W)$. Thus,
\[
t(E_1)\ge |E_1|t^{+}(G),
\]
which together with \eqref{number-tri-in-E1} yields that if $k\ge 1$, then
\begin{align}\label{align2}
t^{+}(G) \leq  \frac{|X|(|E_1|- (3\ell -1))+(n-|X|)(|E_1|-(2\ell -k))}{|E_1|}.
\end{align}

Suppose that $k\ge 1$. Let $E_2$ be the set of edges in $W_{\ell,k}$ with one end in $R$ and the other in $V(W_{\ell,k})\setminus R$. Then
\[
|E_2|= 2\ell k-2k^2+k.
\]
We give another bound on  $t^{+}(G)$ by bounding $t(E_2)$. On the one hand, each edge in $E_2$ is contained in a triangle, which implies that
\begin{align}\label{lower-bound-t-E2}
t(E_2)\ge |E_2|t^{+}(G).
\end{align}
On the other hand, if $w\in X$, then there are at least three vertices in $V(W_{\ell,k})$ not adjacent to $w$ by Claim \ref{Claim:vertex}, and so $w$ misses at least $3k$ edges in $E_2$. If $w \notin X$, then there exists $x\in R$ such that $xw\notin E(G)$, which means that $w$ misses at least $s-k=2\ell -2k +1$ edges in $E_2$. Thus,
\begin{align}\label{upper-bound-t-E2}
t(E_2)\le |X|(|E_2|-3k)+(n-|X|)(|E_2|-(2\ell-2k+1)).
\end{align}
Combining \eqref{lower-bound-t-E2} and \eqref{upper-bound-t-E2}, we conclude that
\begin{align}\label{align3}
t^{+}(G) \leq \frac{|X|(|E_2|-3k)+(n-|X|)(|E_2|-(2\ell-2k+1))}{|E_2|}.
\end{align}

\begin{claim}\label{Claim:X}
For $k \geq 2$, we have $|X| \geq n-k(n-t^{+}(G))/2$.
\end{claim}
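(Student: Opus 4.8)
The plan is to read the bound off inequality~\eqref{align3} alone, using only the hypothesis $t^{+}(G)>f(\ell)n$ and the range $2\le k\le \ell-2$. First I would rewrite \eqref{align3} so that $|X|$ is isolated: multiplying it through by $|E_2|=k(2\ell-2k+1)$, expanding $|X|(|E_2|-3k)+(n-|X|)\bigl(|E_2|-(2\ell-2k+1)\bigr)$ and collecting the coefficient of $|X|$ (which equals $2\ell-5k+1$) yields, after simplification,
\[
(2\ell-2k+1)\bigl[\,n-k\bigl(n-t^{+}(G)\bigr)\,\bigr]\ \le\ |X|\,(2\ell-5k+1).
\]

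Next I would pass to the case $\ell\ge4$ (for $\ell=3$ the claim is vacuous, since then $k\le\ell-2=1$), where $f(\ell)=\tfrac{3\ell-7}{3\ell-1}$ and hence $n-t^{+}(G)<\tfrac{6}{3\ell-1}n$. If $2\ell-5k+1>0$, I divide the displayed inequality by it; if $2\ell-5k+1\le0$, then the same inequality forces $n-k(n-t^{+}(G))\le0$, i.e.\ $t^{+}(G)\le\tfrac{k-1}{k}n$, and I would eliminate this case by combining it with a second estimate on $|X|$ available at this point --- for instance \eqref{align2}, or, in the smallest case $k=2$, the direct observation that $X$ is exactly the common neighbourhood of the two vertices of $R$ and the edge joining them lies in a triangle, so $|X|\ge t^{+}(G)$, which already is the asserted bound for $k=2$.

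Assuming then $2\ell-5k+1>0$, the displayed inequality gives $|X|\ge\dfrac{(2\ell-2k+1)\bigl[n-k(n-t^{+}(G))\bigr]}{2\ell-5k+1}$. Writing $a=n-t^{+}(G)$, the inequality $\dfrac{(2\ell-2k+1)(n-ka)}{2\ell-5k+1}\ge n-\tfrac{ka}{2}$ becomes, after clearing the positive denominator and cancelling a factor $k$, equivalent to $a(2\ell+k+1)\le6n$; and this holds because $k\le\ell-2$ gives $2\ell+k+1\le3\ell-1$ while $a<\tfrac{6n}{3\ell-1}$, so $a(2\ell+k+1)<6n$. This yields $|X|\ge n-\tfrac12 k\bigl(n-t^{+}(G)\bigr)$, as required.

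The step I expect to be the main obstacle is the sign analysis of $2\ell-5k+1$: for small $\ell$ with $k$ near $\ell-2$ (for example $\ell=4$, $k=2$) this quantity is genuinely non-positive, and then \eqref{align3} on its own supplies only an \emph{upper} bound on $|X|$. Handling that regime means showing that no such minimum $5$-wheel-like subgraph can exist under the hypothesis $t^{+}(G)>f(\ell)n$, which requires pairing the upper bound on $|X|$ from \eqref{align3} with \eqref{align2} (or with the lower bound on $|X|$ coming from the positive co-degrees of edges inside the clique $R$) and verifying the resulting contradiction uniformly over all admissible pairs $(\ell,k)$.
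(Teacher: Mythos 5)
Your algebra in the regime $2\ell-5k+1>0$ is correct, and your direct argument for $k=2$ (the edge of $R$ lies in a triangle, so $|X|\ge t^{+}(G)$) is exactly the right idea. But the remaining regime $2\ell-5k+1\le 0$ with $k\ge 3$ is a genuine gap, not just a technical loose end, and the repair you propose cannot work. Once $t^{+}(G)>f(\ell)n$ is imposed, both \eqref{align2} and \eqref{align3} only bound $|X|$ from \emph{above}: the coefficient of $|X|$ in \eqref{align2} is $-(\ell+k-1)<0$, and in \eqref{align3} it is $2\ell-5k+1\le 0$ in this regime. So no combination of these two inequalities can produce the required \emph{lower} bound on $|X|$, nor a contradiction ruling the configuration out. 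Concretely, for $(\ell,k)=(5,3)$ one has $|E_1|=23$, $|E_2|=15$, $f(5)=4/7$, and the values $t^{+}(G)=0.58n$, $|X|=0.3n$ satisfy both \eqref{align2} ($23\cdot 0.58n\le 16n-7|X|$) and \eqref{align3} ($15\cdot 0.58n\le 10n-4|X|$) while violating the claimed bound $|X|\ge n-\tfrac{3}{2}(n-t^{+}(G))=0.37n$; also your intermediate conclusion $t^{+}(G)\le\frac{k-1}{k}n=\frac{2}{3}n$ does not contradict $t^{+}(G)>\frac{4}{7}n$ there. So Claim~\ref{Claim:X} is simply not a consequence of \eqref{align2} and \eqref{align3} together with the co-degree hypothesis; the information it encodes is exactly what Case 4 later adds to \eqref{align3} to force the contradiction.

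The paper proves the claim by the co-degree idea you mention only in passing: every pair $x,y\in R$ spans an edge inside the clique $Q_1$, hence lies in a triangle, hence $|N_G(x)\cap N_G(y)|\ge t^{+}(G)$; for $k$ even one intersects the common neighbourhoods of $k/2$ disjoint pairs covering $R$ to get $|X|\ge n-\frac{k}{2}\left(n-t^{+}(G)\right)$, and for odd $k$ a naive pairing would only give the weaker factor $\frac{k+1}{2}$, so the case $k=3$ is handled by a separate count (the sets $X_{ij}=\left(N_G(x_i)\cap N_G(x_j)\right)\setminus X$ are disjoint, giving $3t^{+}(G)\le n+2|X|$) and larger odd $k$ is reduced to it. This argument is short, needs no hypothesis on $t^{+}(G)$ beyond its definition, and covers all $k\ge 2$ uniformly; to complete your proof you would have to import it for every pair $(\ell,k)$ with $5k\ge 2\ell+1$ and $k\ge 3$, including the odd-$k$ refinement, at which point you have essentially reproduced the paper's proof anyway.
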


\begin{proof}
Note that $G[R]$ is a clique and every edge in $G[R]$ is contained in a triangle. So, for every pair $x,y \in R$, $|N_G(x) \cap N_G(y)| \geq t^{+}(G)$. If $k$ is even, then we can partition $R$ into $k/2$ disjoint pairs and so
\begin{align}
|X| \geq t^{+}(G)-\left(\frac{k}{2}-1\right)\left(n-t^{+}(G)\right) = n-\frac{k(n-t^{+}(G))}{2}.\notag
\end{align}

Assume that $k$ is odd. If $k=3$, we may assume that $R = \{x_1,x_2,x_3\}$. For $1 \leq i < j \leq 3$, define $X_{ij} = \left(N_G(x_i)\cap N_G(x_j)\right)\setminus X $. Then $X_{12}, X_{13}, X_{23}$ is pairwise disjoint and $|X_{ij}|+ |X|$ is the number of triangles containing $x_ix_j$, which is at least $t^{+}(G)$. So
\begin{align}
3t^{+}(G) \leq |X_{12}|+|X_{13}|+|X_{23}| + 3|X| \leq n+2|X|,\notag
\end{align}
which implies that
\begin{align}\label{case-k=3}
|X|\geq \frac{3t^{+}(G)-n}{2}.
\end{align}
For the case that $k\ge 5$, by \eqref{case-k=3}, we have
\begin{align}
|X| \geq \left(\frac{3}{2}t^{+}(G) - \frac{1}{2}n\right) - \frac{k-3}{2}\left(n - t^{+}(G)\right) = n-\frac{k(n-t^{+}(G))}{2}, \notag
\end{align}
which completes the proof of Claim \ref{Claim:X}.
\end{proof}

We divide our remaining proof into the following  four cases according to the size of $R$.\\

\textbf{Case 1}. $k =0$.

Let $Y$ be the set of vertices in $G$ that are adjacent to all vertices in $Q_1$. Using the same arguments as in  Claim~\ref{Claim:X}, we have
\[
|Y| \geq n-\frac{(\ell-1)(n-t^{+}(G))}{2}.
\]
Let $E_3=E(W_{\ell,k})\setminus \{u_1u_2\}$. Then $|E_3| = \ell^2 + \ell -2$. For each $w \in V(G)$, if $w \in Y$, then $w$ must be nonadjacent to $v$, $u_1$ and one vertex in $V(Q_2) \cup \{u_2\}$. So $w$ misses at least $4\ell-4$ edges in $E_3$. If $w \notin Y$, then $w$ is not adjacent to at least one vertex of $Q_1$, which implies $w$ misses at least $3\ell -2$ edges in $E_3$ by  Claim~\ref{Claim:vertex}. Therefore, we have
\begin{align}
t^{+}(G) &\leq \frac{|Y|(|E_3|-(4\ell -4))+(n-|Y|)(|E_3|-(3\ell -2))}{|E_3|}\notag\\
&= n- \frac{(3\ell-2)n}{|E_3|}-\frac{(\ell-2)|Y|}{|E_3|}\notag\\
& \leq n- \frac{(3\ell-2)n}{\ell^2 + \ell -2}-\frac{(\ell-2)}{\ell^2 + \ell -2}\left(n-\frac{(\ell-1)(n-t^{+}(G))}{2}\right).\notag
\end{align}
This implies that
\begin{align}
\left(2(\ell^2 + \ell -2)+(\ell-2)(\ell-1)\right)t^{+}(G)   \leq  \left(2(\ell^2 + \ell -2)-2(3\ell-2)-2(\ell-2)+(\ell-2)(\ell-1)\right)n,\notag
\end{align}
i.e.,
\[
(3\ell^2-\ell-2)t^{+}(G)\le (3\ell^2-9\ell+6)n.
\]
So, $t^{+}(G) \leq \frac{3\ell -6}{3\ell+2}n < f(\ell)n$, a contradiction.

\textbf{Case 2}. $k =1$.

It follows from \eqref{align3} that
\begin{align*}
t^{+}(G) \leq \frac{|X|(|E_2|-3)+(n-|X|)(|E_2|-(2\ell-1))}{|E_2|} = \frac{(2\ell-4)}{2\ell-1}|X|,
\end{align*}
which implies that
\begin{align}\label{bound-X-case2}
|X| \geq \frac{2\ell-1}{2\ell-4}t^{+}(G).
\end{align}
Substituting \eqref{bound-X-case2} into \eqref{align2} yields that
\begin{align*}
t^{+}(G) &\leq \frac{|X|(|E_1|- (3\ell -1))+(n-|X|)(|E_1|-(2\ell -1))}{|E_1|}\notag\\
&= n-\frac{(2\ell-1)n}{|E_1|}-\frac{\ell|X|}{|E_1|} \notag\\
&\le  \frac{\ell^2-\ell-1}{\ell^2+\ell-2}n-\frac{\ell(2\ell-1)}{(\ell^2+\ell-2)(2\ell-4)}t^{+}(G).
\end{align*}
Thus,
\[
t^{+}(G)\le \frac{(\ell^2-\ell-1)(2\ell-4)}{(\ell^2+\ell-2)(2\ell-4)+\ell(2\ell-1)}=\frac{2\ell^3-6\ell^2+2\ell +4}{2\ell^3-9\ell +8}n\le f(\ell)n,
\]
a contradiction.

In the following, suppose that $k\ge 2$. It follows from $k\le \ell-2$ that $\ell\ge 4$.

\textbf{Case 3}. $2 \leq k \leq \frac{\ell+3}{4}$.

In this case, we have  $\ell \ge 5$. By~\eqref{align2} and  Claim~\ref{Claim:X}, we have
\begin{align}
t^{+}(G) &\leq \frac{|X|(|E_1|- (3\ell -1))+(n-|X|)(|E_1|-(2\ell -k))}{|E_1|}\notag\\
&\leq
\frac{|X|(|E_1|- (3\ell -1))+(n-|X|)(|E_1|-(\frac{7}{4}\ell-\frac{3}{4}))}{|E_1|}\notag\\
&= n- \frac{7\ell-3}{4|E_1|}n-\frac{5\ell-1}{4|E_1|}|X|\notag\\
&= n- \frac{7\ell-3}{4|E_1|}n-\frac{5\ell-1}{4|E_1|}\left(n-\frac{k(n-t^{+}(G))}{2}\right), \notag
\end{align}
where the second inequality holds as $k \leq \frac{\ell+3}{4}$. This means
\[
\left(8|E_1|+(5\ell-1)k\right)t^{+}(G)\le \left(8|E_1|-2(7\ell-3)-2(5\ell-1)+(5\ell-1)k\right)n.
\]
Recall that $|E_1|=\ell^2 +\ell -\frac{1}{2}k^2-\frac{1}{2}k-1$. We have
\begin{align}\label{upper-bound-t-case3}
t^{+}(G)\leq \left(1- \frac{8(3\ell -1)}{-4k^2 +(5\ell - 5)k + 8(\ell^2+\ell -1)}\right)n.
\end{align}
Let
\[
g(k) =-4k^2 +(5\ell - 5)k.
\]
It follows from  $\ell\ge 5$ that $g(k)$ is monotone increasing  on the interval $[0, \frac{\ell+3}{4}]$. So,
\[
g(k) \leq g(\frac{\ell+3}{4}) = \ell^2+\ell -6.
\]
Substituting it into \eqref{upper-bound-t-case3} yields that
\begin{align}
t^{+}(G) &\leq \left(1- \frac{8(3\ell -1)}{\ell^2+\ell -6 + 8(\ell^2+\ell -1)}\right)n\notag\\
&= \left(1- \frac{8(3\ell -1)}{9\ell^2+9\ell -14}\right)n\notag\\
&<\left(1- \frac{6}{3\ell-1}\right)n,\notag
\end{align}
a contradiction.

\textbf{Case 4}. $k > \frac{\ell+3}{4}$.

By~\eqref{align3} and  Claim~\ref{Claim:X}, we have
\begin{align}
t^{+}(G)&\leq \frac{|X|(|E_2|-3k)+(n-|X|)(|E_2|-(2\ell-2k+1))}{|E_2|} \notag\\
&\leq \frac{|X|(|E_2|-3k)+(n-|X|)(|E_2|-(\ell-k+3))}{|E_2|} \notag\\
&= n- \frac{(\ell-k+3)}{|E_2|}n-\frac{(4k-\ell-3)}{|E_2|}|X|\notag\\
&\le  n- \frac{(\ell-k+3)}{|E_2|}n-\frac{(4k-\ell-3)}{|E_2|}\left(n-\frac{k(n-t^{+}(G))}{2}\right), \notag
\notag
\end{align}
where the second inequality holds as $k \leq \ell -2$. This means
\[
t^{+}(G)\le \left(1-\frac{6k}{2|E_2|+4k^2-k\ell-3k}\right)n,
\]
which together with $|E_2|=2\ell k-2k^2+k$ yields that
\[
t^{+}(G)\le  \frac{3\ell -7}{3\ell - 1}n.
\]

We can get a contradiction in all cases, which completes the proof.

\subsection{Tightness of Theorem~\ref{main result}}\label{sub4.2}
In this subsection, we show that the bound on positive co-degree in Theorem~\ref{main result} is best possible by constructing a $\mathcal{K}_{\ell+1}^3$-saturated $(\ell+1)$-partite $3$-graph $\mathcal{H}$ on $n$ vertices with $\delta_{2}^{+}(\mathcal{H})= f(\ell) n$. If $\ell=3$, we choose $n$ with $7\mid n$ and partition the vertex set
\begin{align}
V(\mathcal{H}) = A_1 \cup \cdots \cup A_5 \cup B_1\notag
\end{align}
such that
\begin{align}
|A_1|= \cdots = |A_5| =\frac{n}{7}~~\text{and}~~ |B_1|=\frac{2n}{7}.\notag
\end{align}
For the case that $\ell \geq 4$, we choose $n$ with $(3\ell-1)\mid n$  and partition the vertex set
\begin{align}
V = A_1 \cup \cdots \cup A_5 \cup B_1\cup \cdots \cup B_{\ell-2}\notag
\end{align}
such that
\begin{align}
|A_1|= \cdots = |A_5| =\frac{n}{3\ell-1}~~\text{and}~~ |B_1| = \cdots = |B_{\ell-2}| =\frac{3n}{3\ell-1}.\notag
\end{align}

The edge set of $\mathcal{H}$ consists of the following three types.
\begin{itemize}
\item all edges that intersect $A_i$, $A_{i+1}$ and $B_{j}$ on exact one vertex, where the indices of $A_i$, $A_{i+1}$ are taken modulo $5$ and $j \in [\ell -2]$;

\item all edges that intersect $A_i$, $B_j$ and $B_{j'}$ on exact one vertex, where $i \in [5]$ and $j,j' \in [\ell -2]$ are distinct;

\item all edges that intersect $B_j$, $B_{j'}$ and $B_{j''}$ on exact one vertex, where $j,j',j'' \in [\ell -2]$ are distinct.
\end{itemize}

It is easy to check that $\partial\mathcal{H}$ is a blowup of some $5$-wheel-like graph with   chromatic number  $\ell+1$, which implies $\mathcal{H}$ is  $(\ell+1)$-partite by Observation~\ref{l-partite}. The argument of Brandt~\cite{B03com} shows that $\partial\mathcal{H}$ is a $K_{\ell+1}$-saturated graph. By Lemma~\ref{Maximal Kl if and only if maximal Kr}, $\mathcal{H}$ is a $\mathcal{K}_{\ell+1}^3$-saturated 3-graph. Now we bound the co-degree $d(x,y)$ of a pair $x,y\in V(\mathcal{H})$. If $x\in B_j, y\in B_{j'}$ or $x \in B_j, y\in A_i$, where $i \in [5]$ and $j,j' \in [\ell -2]$ are distinct, then  $d(x,y)=\frac{3\ell-7}{3\ell-1}n$. If $x\in A_i, y \in A_{i+1}$, where the indices are taken modulo $5$, then $d(x,y)=\frac{3\ell-6}{3\ell-1}n$. For the other cases, $d(x,y)=0$. So, $\delta_{2}^{+}(\mathcal{H})=\frac{3\ell-7}{3\ell-1}n$ if $\ell\ge 4$. The case that $\ell=3$ is more simpler and we omit it here.

\section{Concluding Remarks}\label{Con}

In this paper, we give two stability results for $\mathcal{K}_{\ell + 1}^{r}$-saturated $r$-graphs. In fact, the stability of hypergraphs has been studied widely, one of which is the following degree-stability of $\mathcal{K}_{\ell+1}^r$ given by  Liu, Mubayi and Reiher~\cite{LMR21}.

\begin{theorem}[Liu, Mubayi and Reiher~\cite{LMR21}]\label{THM:liu degree}
For $\ell \geq r \geq 3$ there exist $\varepsilon > 0$ and $N_0 \in \mathbb{N}$ such that every $\mathcal{K}_{\ell+1}^r$-free $r$-graph $\mathcal{H}$ on $n \geq N_0$ vertices with $\delta(\mathcal{H}) \geq \left(\binom{\ell-1}{r-1}/\ell^{r-1}- \varepsilon \right)n^{r-1}$ is $\ell$-partite.
\end{theorem}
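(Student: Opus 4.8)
The plan is to reduce the statement to the graph Andr\'asfai--Erd\H{o}s--S\'os theorem (Theorem \ref{AES}) applied to the shadow graph $G := \partial_{r-2}\mathcal{H}$, by showing that the hypergraph minimum degree condition forces a linear minimum degree on $G$. First, since $\mathcal{H}$ is $\mathcal{K}_{\ell+1}^r$-free, Observation \ref{ob1}(a) gives that $G$ is $K_{\ell+1}$-free, and by Observation \ref{l-partite} it suffices to prove that $G$ is $\ell$-partite; so the whole game is to bound $\delta(G)$ from below.

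Fix a vertex $v$. The key observation is that the link $L_{\mathcal{H}}(v)$, an $(r-1)$-graph all of whose edges lie inside $N_G(v)$, is $\mathcal{K}_{\ell}^{r-1}$-free: indeed $\partial_{r-3}L_{\mathcal{H}}(v) \subseteq G[N_G(v)]$, and $G[N_G(v)]$ is $K_{\ell}$-free because $G$ is $K_{\ell+1}$-free and $v$ is adjacent in $G$ to every vertex of $N_G(v)$, so one may apply Observation \ref{ob1}(a) to the $(r-1)$-graph $L_{\mathcal{H}}(v)$. Consequently, by Mubayi's theorem applied with parameters $r-1$ and $\ell-1$ (legitimate since $\ell \ge r \ge 3$), together with AM--GM on the part sizes,
\[
d_{\mathcal{H}}(v) = |L_{\mathcal{H}}(v)| \le \mathrm{ex}\!\left(|N_G(v)|,\, \mathcal{K}_{\ell}^{r-1}\right) = t_{r-1}\!\left(|N_G(v)|,\, \ell-1\right) \le \binom{\ell-1}{r-1}\left(\frac{|N_G(v)|}{\ell-1}\right)^{r-1} + O_{\ell,r}\!\left(n^{r-2}\right).
\]
Comparing with the hypothesis $d_{\mathcal{H}}(v) \ge \big(\binom{\ell-1}{r-1}/\ell^{r-1} - \varepsilon\big)n^{r-1}$ and solving for $|N_G(v)|$ (Bernoulli's inequality, with $n$ large enough to absorb the lower-order term) yields $d_G(v) = |N_G(v)| \ge \big(\tfrac{\ell-1}{\ell} - C_{\ell,r}\,\varepsilon\big)n$ for some constant $C_{\ell,r}$.

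It then remains to use the arithmetic fact $\tfrac{\ell-1}{\ell} - \tfrac{3\ell-4}{3\ell-1} = \tfrac{1}{\ell(3\ell-1)} > 0$: choosing $\varepsilon$ small in terms of $\ell$ and $r$ (for instance $\varepsilon < \tfrac{1}{2C_{\ell,r}\,\ell(3\ell-1)}$) and $n \ge N_0$ gives $\delta(G) > \tfrac{3\ell-4}{3\ell-1}n$. Theorem \ref{AES} forces $G$ to be $\ell$-partite, and Observation \ref{l-partite} transfers this back to $\mathcal{H}$.

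The only genuinely substantive point, rather than bookkeeping, is the passage from ``$d_{\mathcal{H}}(v)$ large'' to ``$d_G(v)=|N_G(v)|$ large'': one must recognize that the link is $\mathcal{K}_\ell^{r-1}$-free and that the Tur\'an-type bound $t_{r-1}(m,\ell-1)\le\binom{\ell-1}{r-1}(m/(\ell-1))^{r-1}$ inverts to a useful lower bound on $m$ precisely because the exponent is $r-1$ and the target density $\binom{\ell-1}{r-1}/\ell^{r-1}$ is exactly the degree density of the balanced $\ell$-partition. After that the result follows from AES essentially for free; no stability machinery (Mubayi's stability theorem, cleaning lemmas, and so on) appears to be needed for this particular statement, the only wrinkle being the universal and rounding constants hidden in $\varepsilon$ and $N_0$. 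One could also check whether running the same reduction with the optimal graph thresholds at both ends recovers an optimal hypergraph threshold, but that is a refinement beyond what is asked.
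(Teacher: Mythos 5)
Your proposal is correct, and its skeleton is the same as the paper's: pass to the shadow $G=\partial_{r-2}\mathcal{H}$ (Observation \ref{ob1}(a)), convert the hypergraph degree hypothesis into a lower bound $\delta(G)>\frac{3\ell-4}{3\ell-1}n$, and finish with the Andr\'asfai--Erd\H{o}s--S\'os theorem plus Observation \ref{l-partite}. The only real difference is the middle step. The paper (in its Theorem \ref{end}, which is its explicit version of this statement) bounds $d_{\mathcal{H}}(v)$ by $N(K_{r-1},G[N_G(v)])$ and applies the Fisher--Ryan clique inequality (Theorem \ref{th1}) to the $K_\ell$-free graph $G[N_G(v)]$, getting $d_{\mathcal{H}}(v)\le\binom{\ell-1}{r-1}\bigl(d_G(v)/(\ell-1)\bigr)^{r-1}$ with no error term; this yields a clean explicit $\varepsilon=\binom{\ell-1}{r-1}\bigl(\frac{1}{\ell^{r-1}}-\bigl(\frac{3\ell-4}{3\ell^2-4\ell+1}\bigr)^{r-1}\bigr)$ valid for all $n$, with no $N_0$. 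You instead observe that the link $L_{\mathcal{H}}(v)$ is a $\mathcal{K}_{\ell}^{r-1}$-free $(r-1)$-graph on $N_G(v)$ (a correct deduction, via the shadow of the link sitting inside the $K_\ell$-free graph $G[N_G(v)]$) and invoke Mubayi's exact Tur\'an theorem, $d_{\mathcal{H}}(v)\le t_{r-1}(|N_G(v)|,\ell-1)$, which costs you an $O_{\ell,r}(n^{r-2})$ rounding term and hence the need for $N_0$ and an unspecified constant $C_{\ell,r}$ in $\varepsilon$. Both routes are valid; the paper's buys an explicit threshold, while yours leans on a heavier black box (Mubayi's theorem rather than an elementary clique-count inequality) but suffices for the qualitative statement as cited.
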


However, they did not give an explicit bound on $\varepsilon$. Now we give a bound on $\varepsilon$ by  Andr\'{a}sfai-Erd\H{o}s-S\'{o}s Theorem.

\begin{theorem}\label{end}
Let $\ell \geq r \geq 3$ be integers and $\varepsilon=\binom{\ell-1}{r-1}\left(\frac{1}{\ell^{r-1}}-\left(\frac{3\ell-4}{3\ell^2-4\ell+1}\right)^{r-1}\right)$. Then every $\mathcal{K}_{\ell+1}^r$-free $r$-graph $\mathcal{H}$ on $n$ vertices with $\delta(\mathcal{H}) > \left(\binom{\ell-1}{r-1}/\ell^{r-1}- \varepsilon \right)n^{r-1}$ is $\ell$-partite.
\end{theorem}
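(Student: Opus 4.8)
The plan is to push everything down to the graph level via the $(r-2)$-th shadow and then apply the Andr\'asfai--Erd\H{o}s--S\'os theorem. Set $G=\partial_{r-2}\mathcal{H}$, regarded as a graph on the $n$ vertices $V(\mathcal{H})$. Since $\mathcal{H}$ is $\mathcal{K}_{\ell+1}^r$-free, Observation~\ref{ob1}(a) gives that $G$ is $K_{\ell+1}$-free, and by Observation~\ref{l-partite} it suffices to prove that $G$ is $\ell$-partite. In view of Theorem~\ref{AES}, this reduces to showing $\delta(G)>\tfrac{3\ell-4}{3\ell-1}\,n$.

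The crux is a lower bound on $d_G(x)$ for each vertex $x$ in terms of $\delta(\mathcal{H})$. First, $\delta(\mathcal{H})>0$, so every vertex of $\mathcal{H}$ lies in an edge and hence $d_G(x)\ge r-1\ge 1$ for all $x$; in particular $G$ really is a graph on $n$ vertices. Fix $x$ and put $m=d_G(x)$. I claim $d_{\mathcal{H}}(x)\le N(K_{r-1},G[N_G(x)])$: if $B\in L_{\mathcal{H}}(x)$, then $B\cup\{x\}\in\mathcal{H}$, so every pair contained in $B\cup\{x\}$ lies in $\partial_{r-2}\mathcal{H}=G$; thus $B\subseteq N_G(x)$ and $B$ spans a copy of $K_{r-1}$ in $G[N_G(x)]$, and distinct edges of the link yield distinct such copies. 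Next, $G[N_G(x)]$ is $K_\ell$-free, since a $K_\ell$ inside it together with $x$ would form a $K_{\ell+1}$ in $G$. Applying Theorem~\ref{th1} to $G[N_G(x)]$ with parameter $\ell-1$ in place of $\ell$ (legitimate as $2\le r-1\le \ell-1$) gives
\[
N(K_{r-1},G[N_G(x)])\ \le\ \binom{\ell-1}{r-1}\Big(\frac{m}{\ell-1}\Big)^{r-1},
\]
and therefore $\delta(\mathcal{H})\le d_{\mathcal{H}}(x)\le \binom{\ell-1}{r-1}(m/(\ell-1))^{r-1}$.

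Now I would simplify $\varepsilon$. Since $3\ell^2-4\ell+1=(\ell-1)(3\ell-1)$, the definition of $\varepsilon$ telescopes so that the hypothesis reads $\delta(\mathcal{H})>\binom{\ell-1}{r-1}\big(\tfrac{3\ell-4}{(\ell-1)(3\ell-1)}\big)^{r-1}n^{r-1}$. Substituting this into the inequality just obtained yields $m^{r-1}>\big(\tfrac{3\ell-4}{3\ell-1}\big)^{r-1}n^{r-1}$, hence $d_G(x)=m>\tfrac{3\ell-4}{3\ell-1}n$ for every vertex $x$, i.e. $\delta(G)>\tfrac{3\ell-4}{3\ell-1}n$. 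By Theorem~\ref{AES}, $G$ is $\ell$-partite, and then $\mathcal{H}$ is $\ell$-partite by Observation~\ref{l-partite}.

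The proof is mostly bookkeeping once the ingredients are identified; the one genuinely substantive step is the pair of bounds $d_{\mathcal{H}}(x)\le N(K_{r-1},G[N_G(x)])$ and the Fisher--Ryan clique-counting estimate (Theorem~\ref{th1}) applied to the $K_\ell$-free neighbourhood graph. The naive bound $d_{\mathcal{H}}(x)\le\binom{m}{r-1}$ would lose the factor $\binom{\ell-1}{r-1}/(\ell-1)^{r-1}$ and is too weak to reach the stated threshold, so it is precisely the $K_\ell$-freeness of $G[N_G(x)]$ that makes the value of $\varepsilon$ come out exactly right. A minor point to handle carefully is that $\delta(\mathcal{H})>0$ guarantees $G$ has no isolated vertices, so that Theorem~\ref{AES} applies on the full vertex set.
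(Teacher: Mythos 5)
Your proof is correct and follows essentially the same route as the paper: pass to $G=\partial_{r-2}\mathcal{H}$, bound $\delta(\mathcal{H})\le N(K_{r-1},G[N_G(x)])\le \binom{\ell-1}{r-1}\left(d_G(x)/(\ell-1)\right)^{r-1}$ using $K_\ell$-freeness of the neighbourhood (the Fisher--Ryan bound, Theorem~\ref{th1}), deduce $\delta(G)>\frac{3\ell-4}{3\ell-1}n$, and finish with Theorem~\ref{AES} and Observation~\ref{l-partite}. Your explicit factorization $3\ell^2-4\ell+1=(\ell-1)(3\ell-1)$ and the remark that $\delta(\mathcal{H})>0$ rules out isolated vertices only make explicit what the paper leaves implicit.
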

\begin{proof} Suppose that $\mathcal{H}$ is a $\mathcal{K}_{\ell+1}^r$-free $r$-graph $\mathcal{H}$ on $n$ vertices with
\[
\delta(\mathcal{H}) > \left(\binom{\ell-1}{r-1}/\ell^{r-1}- \varepsilon \right)n^{r-1}.
\]
 Let $G = \partial_{r-2}\mathcal{H}$. For every vertex $v$, $d_{\mathcal{H}}(v)$ is at most the number of $K_r$ which contains the vertex $v$ in $G$ by Observation~\ref{ob1}. Then we have $\delta(\mathcal{H}) \leq d_{\mathcal{H}}(v) \leq N(K_{r-1}, G[N_G(v)])$. It follows from $G$ is $K_{\ell+1}$-free that $G[N_G(v)]$ is $K_{\ell}$-free. Hence,
\begin{align}
\left(\binom{\ell-1}{r-1}\frac{1}{\ell^{r-1}}-\varepsilon\right)n^{r-1}< \delta(\mathcal{H})\leq N(K_{r-1}, G[N_G(v)]) \leq \binom{\ell-1}{r-1}\left(\frac{d_{G}(v)}{\ell-1}\right)^{r-1},\notag
\end{align}
which implies that $d_{G}(v) > \frac{3\ell-4}{3\ell-1}n$. Then  by  Andr\'{a}sfai-Erd\H{o}s-S\'{o}s Theorem, $G$ is $\ell$-partite. which means $\mathcal{H}$ is also $\ell$-partite by Observation~\ref{l-partite}.
\end{proof}
We believe that the bound in Theorem~\ref{end} is not best possible. It is natural to ask the following problem.

\begin{problem}
Let $\ell \geq r \geq 3$ be integers. Find the maximum $\varepsilon=\varepsilon(\ell,r)$ such that  every $\mathcal{K}_{\ell+1}^r$-free $r$-graph $\mathcal{H}$ on $n$ vertices with  $\delta(\mathcal{H}) > \left(\binom{\ell-1}{r-1}/\ell^{r-1}- \varepsilon \right)n^{r-1}$ is $\ell$-partite for large $n$.
\end{problem}

Theorem \ref{main result} gives a 3-graph version of  Andr\'{a}sfai-Erd\H{o}s-S\'{o}s Theorem and  is the first stability result on minimum positive co-degree for hypergraphs. Note that most stability results of a family $\mathcal{F}$ of hypergraphs are that every $\mathcal{F}$-free hypergraph $\mathcal{H}$ of size (or minimum degree) close to $\mathrm{ex}(n,\mathcal{F})$ (or minimum degree of its extremal construction)
can be transformed to its extremal construction by deleting and adding very few edges. The positive co-degree given in  Theorem \ref{main result} is much smaller than that in $\mathcal{K}_{\ell + 1}^{3}$'s extremal construction $T_3(n,\ell)$, and the extremal construction of  Theorem \ref{main result} is far from  $T_3(n,\ell)$ in edit-distance. As far as we know, this is  the first result with such an interesting property in hypergraphs.

The  proof of Theorem \ref{main result}  depends on a structural lemma of $K_{\ell+1}$-saturated graphs (Lemma \ref{Lemma:maximal}). It is easy to get $5$-wheel-like subgraph in the shadow of $\mathcal{K}_{\ell + 1}^{3}$-saturated $3$-graphs with large minimum positive co-degree. We do not know whether such a subgraph exists in the shadow of $\mathcal{K}_{\ell + 1}^{3}$-free $3$-graphs $\mathcal{H}$, since  adding a new edges to  $\mathcal{H}$ might reduce the minimum positive co-degree. So we pose the following problem.

\begin{problem}
Let $\ell \geq 3$ be an integer, and define
\[
f(\ell)= \begin{cases}
2/7 \quad & \ell=3,\\
\frac{3\ell-7}{3\ell-1} \quad & \ell \geq 4.
\end{cases}
\]
Is it true that every $\mathcal{K}_{\ell+1}^3$-free $3$-graph $\mathcal{H}$ on $n$ vertices with $\delta_{2}^{+}(\mathcal{H}) > f(\ell) n$ is $\ell$-partite?
\end{problem}

\bibliographystyle{abbrv}
\bibliography{A-stability-theorem-for-maximal-K-free-graphs}

\begin{thebibliography}{10}

\bibitem{andrasfai1974connection}
B.~Andr{\'a}sfai, P.~Erd{\"o}s, and V.~T. S{\'o}s.
\newblock On the connection between chromatic number, maximal clique and
  minimal degree of a graph.
\newblock {\em Discrete Mathematics}, 8(3):205--218, 1974.

\bibitem{Balogh2021Siam}
J.~Balogh, N.~Lemons, and C.~Palmer.
\newblock Maximum size intersecting families of bounded minimum positive
  co-degree.
\newblock {\em SIAM Journal on Discrete Mathematics}, 35(3):1525--1535, 2021.

\bibitem{B03com}
S.~Brandt.
\newblock On the structure of graphs with bounded clique number.
\newblock {\em Combinatorica}, 23(4):693--696, 2003.

\bibitem{ES66}
P.~Erd\H{o}s and M.~Simonovits.
\newblock A limit theorem in graph theory.
\newblock {\em Studia Sci. Math. Hungar.}, 1:51--57, 1966.

\bibitem{ES46}
P.~Erd\H{o}s and A.~H. Stone.
\newblock On the structure of linear graphs.
\newblock {\em Bull. Amer. Math. Soc.}, 52:1087--1091, 1946.

\bibitem{fisher1992bounds}
D.~C. Fisher and J.~Ryan.
\newblock Bounds on the number of complete subgraphs.
\newblock {\em Discrete mathematics}, 103(3):313--320, 1992.

\bibitem{gerbner2021note}
D.~Gerbner.
\newblock A note on stability for maximal ${F}$-free graphs.
\newblock {\em Graphs and Combinatorics}, 37(6):2571--2580, 2021.

\bibitem{liu2021new}
X.~Liu.
\newblock New short proofs to some stability theorems.
\newblock {\em European Journal of Combinatorics}, 96:103350, 2021.

\bibitem{LMR21}
X.~Liu, D.~Mubayi, and C.~Reiher.
\newblock A unified approach to hypergraph stability.
\newblock {\em Journal of Combinatorial Theory, Series B}, 158:36--62, 2023.

\bibitem{mubayi2006hypergraph}
D.~Mubayi.
\newblock A hypergraph extension of {T}ur{\'a}n's theorem.
\newblock {\em Journal of Combinatorial Theory, Series B}, 96(1):122--134,
  2006.

\bibitem{popielarz2018stability}
K.~Popielarz, J.~Sahasrabudhe, and R.~Snyder.
\newblock A stability theorem for maximal ${K}_{r+1}$-free graphs.
\newblock {\em Journal of Combinatorial Theory, Series B}, 132:236--257, 2018.

\bibitem{SI68}
M.~Simonovits.
\newblock A method for solving extremal problems in graph theory, stability
  problems.
\newblock In {\em Theory of {G}raphs ({P}roc. {C}olloq., {T}ihany, 1966)},
  pages 279--319. Academic Press, New York, 1968.

\bibitem{TU41}
P.~Tur{\'a}n.
\newblock On an extermal problem in graph theory.
\newblock {\em Mat. Fiz. Lapok}, 48:436--452, 1941.

\bibitem{tyomkyn2015strong}
M.~Tyomkyn and A.~J. Uzzell.
\newblock Strong {T}ur{\'a}n stability.
\newblock {\em Electronic Notes in Discrete Mathematics}, 49:433--440, 2015.

\bibitem{wang2022counterexamples}
J.~Wang, S.~Wang, and W.~Yang.
\newblock Counterexamples to {G}erbner's conjecture on stability of maximal
  ${F}$-free graphs.
\newblock {\em arXiv preprint arXiv:2205.00426}, 2022.

\bibitem{wang2022stability}
J.~Wang, S.~Wang, W.~Yang, and X.~Yuan.
\newblock A stability theorem for maximal {$C_{2k+1}$}-free graphs.
\newblock {\em J. Graph Theory}, 101(2):274--287, 2022.

\end{thebibliography}
\end{document}